\providecommand{\U}[1]{\protect\rule{.1in}{.1in}}
\providecommand{\U}[1]{\protect\rule{.1in}{.1in}}
\newtheorem{theorem}{Theorem}
\theoremstyle{plain}
\newtheorem{definition}{Definition}
\newtheorem{example}{Example}
\newtheorem{lemma}{Lemma}
\newtheorem{problem}{Problem}
\newtheorem{proposition}{Proposition}
\newtheorem{remark}{Remark}
\numberwithin{equation}{section}
\begin{document}
\title[Extrapolation]{Extrapolation: Stories and Problems}
\author{Sergey Astashkin}
\address{Samara National Research University, Moskovskoye shosse 34, 443086, Samara, Russia}
\email{astash56@mail.ru}
\author{Mario Milman}
\address{Instituto Argentino de Matematica \\
Buenos Aires, Argentina}
\email{mario.milman@icloud.com}
\urladdr{https://sites.google.com/site/mariomilman/}
\keywords{Extrapolation Theory}

\begin{abstract}
We discuss some aspects of Extrapolation Theory. The presentation includes
many examples and open problems.

\end{abstract}
\maketitle
\tableofcontents

\section{Introduction}

Apparently the first result concerning extrapolation of operator inequalities
was formulated by Yano \cite{yano}, although special cases had been considered
earlier in papers by Marcinkiewicz and Titchmarsh (cf. \cite{zyg} for proofs
and precise references). Yano's result concerns with operators acting on
$L^{p}$ spaces defined on finite measure spaces\footnote{Although it has a
straightforward extension to infinite measure spaces (cf. (\ref{infmeasure})
below, for example).}, where $p$ belongs to a fixed open interval
$(p_{0},p_{1}),$ and the rate of blow up of the norm inequalities, as$\ p$
approaches the end points of the interval $(p_{0},p_{1})$, is prescribed.
Yano's theorem was stated before the foundations of the general theory of
interpolation were developed in the sixties and seventies. Moreover, during
this so called \textquotedblleft golden era of interpolation\textquotedblright%
\ most researchers were busy..interpolating, and Yano's theorem remained an
isolated result for a long time. As a matter of fact, despite its obvious
elementary nature, and the many concrete questions that it left open, further
developments had to await for the general theory of Extrapolation that was
only initiated in the 1980's (cf. \cite{jm1}, \cite{10}, \cite{11},
\cite{M94}). The abstract theory of extrapolation not only substantially
extends and improves Yano's original result but it provides a general
framework, and a powerful machinery, to extract information from inequalities
that decay in a specified way in scales of Banach spaces.

On the occasion of this special issue of Pure and Applied Functional Analysis
devoted to Extrapolation Theory\footnote{We refer to the Preface of this
Special Issue for a brief discussion of Extrapolation and its connection with
Interpolation.} it seemed to us that it would be important to include a paper
collecting in some organized fashion open problems, in order to promote more
activity in this area. Indeed, it is probably fair to say that the vitality of
a field at any given time can be gauged in terms of the quality, and the
number, of its open problems.

Let us now say a few words about our intended audience, and how it has
influenced the choices, as well as the presentation, of topics we shall
discuss. We generally expect that our reader will have some familiarity with
the rudiments of interpolation theory as developed, for example, in the first
few chapters of \cite{bs} or \cite{BL}, to which we refer for background
information and notation. Moreover, since there is a close connection between
Extrapolation and Interpolation, we decided to organize the exposition
exploiting the familiarity of the reader with Interpolation. In short, we have
tried to frame our selection of problems by means of comparing familiar
results of interpolation theory with their (possible) counterparts in
extrapolation, often providing informal background explanations and, as much
as possible, including explicit examples and calculations. These discussions
correspond to what we refer to as *stories* in the title of our paper, which
we have supplemented with Appendices that contain supporting material in order
to facilitate the reading. We hope that these choices will make it easier for
newcomers to profitably read the paper, while at the same time, it is also our
expectation that experts could also find something of interest in the material
as well.

In our effort to streamline the presentation we were led to introduce new
concepts and notation that hopefully will clarify the connections and
facilitate the formulation of new problems. In particular, there are two
natural ways to look at extrapolation methods: either as mechanisms that
reverse the process of interpolation or, alternatively, as interpolation
processes that involve scales of interpolation spaces, rather than pairs of
spaces. The latter is the point of view that we have largely adopted for our
presentation in this paper. The unifying concepts that we introduce here are
those of \textquotedblleft$K$ and $J-$functionals for a scale of
spaces\textquotedblright\ (cf. Section \ref{sec:J&M}, Definition
\ref{def:scaleKfunctional} and Definition \ref{def:scaleJfunctional}).

The penalty that we have paid is that we produced a narrower set of problems
and far more effort was spent on the explanations and motivations than we had
originally envisioned. To somewhat mitigate these concerns we now provide a
super brief set of references to some of the topics that we have not
considered in this paper but have recently received extensive treatments in
the literature, and refer the reader to these works for further references. In
particular, we mention a number of recent papers devoted to abstract
extrapolation methods (cf. \cite{AL2017}, \cite{AL}); the connections between
extrapolation and the so called limiting interpolation spaces was recently
explored in detail in \cite{ALM}, \cite{asly}. Applications of extrapolation
to embeddings of function spaces and other topics in Harmonic Analysis and
Approximation are well known, for a recent account see \cite{dom}. Some
aspects of the theory of extrapolation, as it applies to non-commutative
$L^{p}$ spaces, has been treated in a paper appearing in this issue (cf.
\cite{suk}), where more references in this direction can be found. We also
refer the reader\footnote{Further sources include the dedicated web page
\par
https://sites.google.com/site/mariomilman/home/extrapolation-links \newline
that contains a listing of many relevant publications on Extrapolation
Theory.} to the other articles in this issue for further information on
extrapolation, other potential sets of problems, and inspiration.

Finally, producing a list of problems creates, well, problems of its own. For
example, one would probably need to exclude some topics and thus some
important problems could end up not being mentioned, one could also create,
unintentionally, the impression that some questions are more important than
others. It is also in the nature of developing such lists that some of the
problems will turn out to be easier to solve than others...Indeed, it will be
immediately clear that our list covers only a small sample of possible topics
and that many important issues are not even mentioned. For these, and other
possible shortcomings, we must apologize in advance and stress that, in
particular, any omissions (or inclusions) do not represent necessarily a value
judgement on our part\footnote{Solve at your own peril!} but simply reflect
our own taste and limitations. Moreover, although we have systematically
documented all the topics discussed therein, it was not our intention to
present a comprehensive bibliography. To supplement our references we refer
the reader to the bibliographies of the papers that we do reference, as well
as the collection of papers included in this volume, and, once again,
apologize in advance\footnote{To add one more story, it is perhaps appropriate
to quote Barry Simon here. In his talk \textquotedblleft Tales of our
forefathers\textquotedblright\ http://www.math.caltech.edu/simon/biblio.html
he quotes H. Steinhaus (via a story by Steinhaus' student Mark Kac):
\textquotedblleft\textit{The acceptance of your work by the mathematical
public goes through three phases: First, they say it's wrong. Then, they say
it's trivial. Finally, they say I did it first}.\textquotedblright} if your
favorite papers are not included in our somewhat limited bibliography.

We should issue one more warning concerning the presentation. In order to try
to make the paper more user friendly we tried\footnote{Tried is the operative
word here.} to avoid getting bogged down with another long formal paper and
decided to adopt the somewhat more informal style of a conference or lecture
presentation. This has resulted in a paper that is not linearly ordered,
contains some repetitions, and may require the reader to jump around
topics\footnote{Here we were inspired by Cort\'azar's novel Hopscotch
\cite{cortazar}, which can be described as an open-ended novel, or antinovel;
where the reader is invited to rearrange the different parts of the novel
according to a plan prescribed by the author. For the Cort\'azar fans we add
that some version of \textquotedblleft extrapolaci\'on\textquotedblright\ plays
also a r\^{o}le in the novel.}, and even though we provide some road
maps\footnote{It is possible to read the text without jumping around and in
such case read no further. Otherwise we suggest: Section 2$\rightarrow
3\rightarrow4\rightarrow6\rightarrow8....$}, the reader is invited to
rearrange the order and do her/his own jumping decisions.

\section{Improving Yano's theorem\label{sec:improve}}

In order to illustrate the general methods of extrapolation we start by
showing how the modern ideas led to a substantial improvement of Yano's
theorem. Since the classical result deals with $L^{p}$ spaces on finite
measure spaces\footnote{The main effect of this assumption for us is that in
this case we work with an *ordered* pair (i.e. $L^{\infty}\subset L^{1}$) and
therefore a special reiteration formula is available that makes some
calculations somewhat easier.}, \textbf{in this section} we shall work with
spaces of functions defined on fixed finite measure space that, without loss
of generality, we shall take to be the interval $[0,1]$ with Lebesgue
measure$.$ In particular, we shall let\textbf{ }$L^{p}:=L^{p}[0,1],$ etc.

\begin{theorem}
\label{teo:primero}(Yano \cite{yano}, \cite[Ch.~12, Theorem~4.41]{zyg}) Let
$\alpha>0.$ Then,

(i) Suppose that $T$ is a linear operator with values in the set of measurable
functions on $[0,1],$ such that $T$ is bounded on $L^{p},$ for all
$p\in(1,p_{0}),$ $p_{0}>1$, and $\left\Vert T\right\Vert _{L^{p}\rightarrow
L^{p}}\leq\frac{c}{(p-1)^{\alpha}},$ as $p\rightarrow1,$ with a constant $c>0$
independent of $p$. Then, $T$ can be extended to be a bounded operator
\begin{equation}
T:L(LogL)^{\alpha}\rightarrow L^{1}, \label{y1}%
\end{equation}
where the Zygmund space $L(LogL)^{\alpha}:=L(LogL)^{\alpha}[0,1]$ consists of
all functions $f$ that are measurable on $[0,1]$ and, moreover, such that
\[
\Vert f\Vert_{L(LogL)^{\alpha}}:=\int_{0}^{1}f^{\ast}(t)\log^{\alpha
}(e/t)\,dt<\infty
\]
($f^{\ast}$ is the decreasing rearrangement of the function $|f|$, see e.g.
\cite[Ch.~II]{KPS}).

(ii) Suppose that $T$ is a bounded linear operator on $L^{p},$ for all $p>1,$
and such that for some constant $c>0,$ $\left\Vert T\right\Vert _{L^{p}%
\rightarrow L^{p}}\leq cp^{\alpha},$ as $p\rightarrow\infty$. Then, $T$ is a
bounded operator,
\begin{equation}
T:\,L^{\infty}\rightarrow e^{L^{1/\alpha}}, \label{y2}%
\end{equation}
where the Zygmund space $e^{L^{1/\alpha}}:$ =$e^{L^{1/\alpha}}[0,1]$ consists
of all measurable functions $f(t)$ on $[0,1]$ such that
\[
\Vert f\Vert_{e^{L^{1/\alpha}}}:=\sup_{0<t\leq1}(f^{\ast}(t)\log^{-\alpha
}(e/t))<\infty.
\]

\end{theorem}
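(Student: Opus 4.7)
My plan is to prove (ii) directly by an optimization in $p$ and to prove (i) by decomposing $f$ dyadically by its level sets and balancing the $L^{p_k}$-bounds across the pieces. An alternative route to (ii) is via duality from (i), based on the elementary pairing
$$\int_0^1|fg|\,dx\le \int_0^1 f^*(t)\,g^*(t)\,dt\le \|f\|_{L(LogL)^\alpha}\|g\|_{e^{L^{1/\alpha}}},$$
which exhibits $e^{L^{1/\alpha}}$ inside the K\"othe dual of $L(LogL)^\alpha$; since $T^*$ satisfies an $L^{p'}$-norm bound of the form $c/(p'-1)^\alpha$ by the correspondence $p\leftrightarrow p'$, applying (i) to $T^*$ and dualizing recovers (ii).

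For \emph{part (ii)} I normalize $\|f\|_\infty\le 1$. Because $[0,1]$ has measure one, $\|f\|_p\le 1$ for every $p\ge 1$, and so $\|Tf\|_p\le cp^\alpha$ by hypothesis. Chebyshev's inequality then yields
$$(Tf)^*(t)\le t^{-1/p}\|Tf\|_p\le cp^\alpha t^{-1/p},\qquad p>1.$$
Optimizing in $p$, the minimum of $p\mapsto p^\alpha t^{-1/p}$ occurs at $p_\ast(t)=\log(1/t)/\alpha$; plugging this value back in gives $(Tf)^*(t)\le C\log^\alpha(e/t)$ uniformly in $t\in(0,1)$, which is exactly $\|Tf\|_{e^{L^{1/\alpha}}}\le C\|f\|_\infty$.

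For \emph{part (i)} I assume $f\ge 0$ and decompose it via its dyadic level sets: $A_k=\{2^k\le f<2^{k+1}\}$ for $k\ge 0$, $A_{-1}=\{f<1\}$, and $f_k=f\chi_{A_k}$. For each $k$, any exponent $p_k>1$ close to $1$ gives, by hypothesis and the finiteness of the measure,
$$\|Tf_k\|_1\le \|Tf_k\|_{p_k}\le c(p_k-1)^{-\alpha}\|f_k\|_{p_k}\le c(p_k-1)^{-\alpha}\,2^{k+1}|A_k|^{1/p_k}.$$
I take the adaptive choice $p_k=1+1/\log(e/|A_k|)$ (with $p_k=2$ if $|A_k|\ge 1/e$), so that $(p_k-1)^{-\alpha}=\log^\alpha(e/|A_k|)$ and $|A_k|^{1/p_k}\asymp |A_k|$. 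Summing the resulting bounds and invoking the layer-cake identification
$$\|f\|_{L(LogL)^\alpha}\asymp \int_0^\infty\lambda_f(s)\log^\alpha(e/\lambda_f(s))\,ds\asymp \sum_k 2^k|A_k|\log^\alpha(e/|A_k|)$$
(valid because $\lambda_f\le 1$ on $[0,1]$) then yields $\|Tf\|_1\le C\|f\|_{L(LogL)^\alpha}$.

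The main obstacle is precisely the adaptive selection of the exponents in part (i): a uniform choice like $p_k=1+1/(k+1)$ matches $\|f\|_{L(LogL)^\alpha}$ only on the ``diagonal'' regime $\log(1/|A_k|)\asymp k$ and fails in general, so $p_k$ must be allowed to depend on the measure of each piece. This difficulty is bypassed in the modern extrapolation framework, in which $L(LogL)^\alpha$ is characterized via the decomposition norm
$$\|f\|_{L(LogL)^\alpha}\asymp \inf\Bigl\{\sum_k (p_k-1)^{-\alpha}\|f_k\|_{p_k}:f=\sum_k f_k\Bigr\},$$
from which (i) follows instantly by the triangle inequality applied to $Tf=\sum_k Tf_k$.
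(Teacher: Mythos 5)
Your argument is correct, and it is a more elementary rendition of the same calculation the paper carries out abstractly. The paper disposes of Yano's theorem in two lines: it applies the $\Sigma$- and $\Delta$-extrapolation functors of Jawerth--Milman to obtain $T:\sum_{p>1}(p-1)^{-\alpha}L^{p}\to\sum_{p>1}L^{p}$ and $T:\Delta_{p>1}L^{p}\to\Delta_{p>1}(p^{-\alpha}L^{p})$, and then quotes the known identifications $\sum_{p>1}(p-1)^{-\alpha}L^{p}=L(LogL)^{\alpha}$, $\sum_{p>1}L^{p}=L^{1}$, $\Delta_{p>1}L^{p}=L^{\infty}$, $\Delta_{p>1}(p^{-\alpha}L^{p})=e^{L^{1/\alpha}}$ from \cite{jm}. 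What you supply is precisely the content of those identifications: your Chebyshev-plus-optimization step $\inf_{p}p^{\alpha}t^{-1/p}\asymp\log^{\alpha}(e/t)$ is the $\Delta$-functor norm of $\{p^{-\alpha}L^{p}\}$ made explicit, and your dyadic level-set decomposition with the adaptive exponent $p_{k}=1+1/\log(e/|A_{k}|)$ is exactly the nontrivial half of the $\Sigma$-functor identity $\sum_{p>1}(p-1)^{-\alpha}L^{p}=L(LogL)^{\alpha}$, as you note in your closing paragraph. So the two proofs are the same mathematics at two levels of abstraction: yours is self-contained and makes visible why an adaptive choice of $p_{k}$ (rather than, say, $p_{k}=1+1/(k+1)$) is forced, while the paper's version is shorter and immediately generalizes to the Banach-pair statements in the example that follows the theorem.

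A few points of polish, none of which is a gap in the argument. In (i) the hypothesis only holds for $1<p<p_{0}$, so your fallback ``$p_{k}=2$ if $|A_{k}|\ge 1/e$'' should be replaced by a fixed exponent in $(1,p_{0})$, with the threshold on $|A_{k}|$ shrunk so that the adaptive $p_{k}$ also lies in $(1,p_{0})$. In (ii) the optimizer $p_{\ast}(t)=\log(1/t)/\alpha$ is admissible only for $t<e^{-\alpha}$; for $t\ge e^{-\alpha}$ use a fixed $p$, which gives a uniform constant. Finally, your duality aside for (ii) as written goes the wrong way: the H\"older-type pairing shows $e^{L^{1/\alpha}}\hookrightarrow\bigl(L(LogL)^{\alpha}\bigr)'$, whereas dualizing $T^{\ast}:L(LogL)^{\alpha}\to L^{1}$ to get $T:L^{\infty}\to e^{L^{1/\alpha}}$ requires the reverse inclusion $\bigl(L(LogL)^{\alpha}\bigr)'\hookrightarrow e^{L^{1/\alpha}}$, i.e.\ the full K\"othe duality; since your direct optimization already proves (ii), this only matters if you want the duality route to stand on its own.
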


Yano's result is a simple consequence of the modern theory of extrapolation.
Indeed, to see (i) we apply the $\sum$ extrapolation functor\footnote{We refer
to Appendix \ref{apendixA:2}\ \ for a discussion on the $\sum$ and $\Delta$
methods of extrapolation.} of \cite{jm} to get%
\begin{equation}
T:\sum_{p>1}\frac{L^{p}}{(p-1)^{\alpha}}\rightarrow\sum_{p>1}L^{p},
\label{yano2}%
\end{equation}
and (\ref{y1}) follows from the known calculations (cf. \cite{jm} and the
discussions below)
\[
\sum_{p>1}\left(  \frac{L^{p}}{(p-1)^{\alpha}}\right)  =L(LogL)^{\alpha
},\text{ \ \ \ \ \ \ }\sum_{p>1}L^{p}=L^{1}.
\]
Likewise, if (ii) holds, then applying the ${\large \Delta}-$functor yields%
\[
T:{\large \Delta}_{p>1}\left(  L^{p}\right)  \rightarrow{\large \Delta}%
_{p>1}(p^{-\alpha}L^{p}).
\]
Consequently, (\ref{y2}) follows since (cf. \cite{jm})%
\[
\Delta_{p>1}\left(  L^{p}\right)  =L^{\infty};\text{ \ \ \ \ \ \ \ \ }%
\Delta_{p>1}(p^{-\alpha}L^{p})=e^{L^{1/\alpha}}.
\]

\begin{example}
The following generalization of Theorem \ref{teo:primero} holds (cf.
\cite{jm}). Let $\vec{A}=(A_{0},A_{1}),$ $\vec{B}=(B_{0},B_{1})$ be Banach
pairs, and let $\alpha>0.$ Then,

(i) Suppose that $T$ is a bounded linear operator\footnote{We refer to
Appendix \ref{apendixA:1}\ \ for notation and background on the real method of
interpolation.}, $T:$ $(A_{0},A_{1})_{\theta,1;J}^{\blacktriangleleft
}\rightarrow(B_{0},B_{1})_{\theta,\infty;K}^{\blacktriangleleft},$ for all
$\theta\in(0,1),$ and such that there exists a constant $c>0,$ so that
\linebreak$\left\Vert T\right\Vert _{(A_{0},A_{1})_{\theta,1;J}%
^{\blacktriangleleft}\rightarrow(B_{0},B_{1})_{\theta,\infty;K}%
^{\blacktriangleleft}}\leq{c}{\theta^{\alpha}}.$ Then, $T$ can be extended to
be a bounded operator%
\[
T:\sum\limits_{\theta}\frac{1}{\theta^{\alpha}}\vec{A}_{\theta,1;J}%
^{\blacktriangleleft}\rightarrow\sum\limits_{\theta}\vec{B}_{\theta,\infty
;K}^{\blacktriangleleft}.
\]
Moreover,
\[
\sum\limits_{\theta}\frac{1}{\theta^{\alpha}}\vec{A}_{\theta,q;J}%
^{\blacktriangleleft}=\{f:\left\Vert f\right\Vert _{\sum\limits_{\theta}%
\frac{1}{\theta}\vec{A}_{\theta,q;J}^{\blacktriangleleft}}=\int_{0}%
^{1}K(s,f;\vec{A})(\log\frac{1}{s})^{\alpha-1}\frac{ds}{s}<\infty\},
\]%
\[
\sum\limits_{\theta}\vec{B}_{\theta,\infty;K}^{\blacktriangleleft}=B_{0}%
+B_{1}.
\]

(ii) Suppose that $T$ is a bounded linear operator, $T:$ $(A_{0}%
,A_{1})_{\theta,q(\theta);K}^{\blacktriangleleft}\rightarrow(B_{0}%
,B_{1})_{\theta,\infty;K}^{\blacktriangleleft},$ for all $\theta\in(0,1),$ and
such that there exists a constant $c>0,$ so that \linebreak$\left\Vert
T\right\Vert _{(A_{0},A_{1})_{\theta,1;K}^{\blacktriangleleft}\rightarrow
(B_{0},B_{1})_{\theta,\infty;K}^{\blacktriangleleft}}\leq\frac{c}%
{(1-\theta)^{\alpha}}.$ Then, $T$ is a bounded operator%
\[
T:{\large \Delta}((A_{0},A_{1})_{\theta,q(\theta);K}^{\blacktriangleleft
})\rightarrow{\large \Delta}((1-\theta)^{\alpha}\vec{B}_{\theta,\infty
;K}^{\blacktriangleleft}).
\]
Moreover,%
\[
{\large \Delta}((1-\theta)^{\alpha}\vec{B}_{\theta,\infty;K}%
^{\blacktriangleleft})=\{f:\left\Vert f\right\Vert _{\Delta((1-\theta
)^{\alpha}\vec{B}_{\theta,\infty;K}^{\blacktriangleleft})}=\sup_{0<t<1}%
(1+\log\frac{1}{t})^{-\alpha}\frac{K(t,f;\vec{B})}{t}<\infty\}
\]%
\[
A_{0}\cap A_{1}\subset{\large \Delta}((A_{0},A_{1})_{\theta,q(\theta
);K}^{\blacktriangleleft}).
\]

\end{example}

\begin{remark}
In particular, the previous Example shows that the conclusions of Yano's
Theorem hold under weaker assumptions: We can replace \textquotedblleft strong
type $(p,p)"$ by \textquotedblleft weak type $(p,p)$" (cf. Section
\ref{sec:discutido} below for full details).
\end{remark}

\begin{remark}
\label{rem:reitera}It follows from the previous example that Yano's theorem
holds for $L^{p}$-spaces based on infinite measure, if we give a proper
interpretation to (\ref{y1}) and (\ref{y2}). For example, suppose that the
underlying measure space is $(0,\infty)$ with Lebesgue measure. Then
(\ref{y1}) should now read%
\[
T:L(LogL)^{\alpha}(0,\infty)+L^{\infty}(0,\infty)\rightarrow L^{1}%
(0,\infty)+L^{\infty}(0,\infty).
\]
The proof is the same: we apply the $\sum-$functor but now we need to recall
that (cf. \cite{jm}, see also \cite{ALM})%
\begin{align}
\sum_{p>1}\left(  \frac{p^{\alpha}}{(p-1)^{\alpha}}L^{p}(0,\infty)\right)   &
=L(LogL)^{\alpha}(0,\infty)+L^{\infty}(0,\infty)\label{infmeasure}\\
\sum_{p>1}L^{p}(0,\infty)  &  =L^{1}(0,\infty)+L^{\infty}(0,\infty).
\end{align}

\end{remark}

\begin{example}
Consider the Sobolev pair $\vec{A}=(W_{L^{1}}^{k}(\mathbb{R}^{n}\mathbb{)}%
$,$W_{L^{\infty}}^{k}(\mathbb{R}^{n}\mathbb{))}$, where $k\in\mathbb{N},$ and
for a function space $X(\mathbb{R}^{n}\mathbb{)}$, the corresponding Sobolev
space is defined using the norm%
\[
\left\Vert f\right\Vert _{W_{X}^{k}}=\sum\limits_{\left\vert j\right\vert \leq
k}\left\Vert D^{j}f\right\Vert _{X}.
\]
Then a classical computation due to DeVore and Scherer yields (cf.
\cite{bs}),
\begin{equation}
K(t,f;\vec{A})\approx\sum\limits_{\left\vert j\right\vert \leq k}%
K(t,D^{j}f,L^{1},L^{\infty}). \label{formula}%
\end{equation}
It follows that $\vec{A}_{1/p^{\prime},p;K}^{\blacktriangleleft}=W_{L^{p}}%
^{k}(\mathbb{R}^{n}\mathbb{)}$, and therefore,%
\[
\sum\limits_{p>1}\frac{1}{(1/p^{\prime})^{\alpha}}\vec{A}_{1/p^{\prime}%
,p;K}^{\blacktriangleleft}=W_{L(LogL)^{\alpha}}^{k}(\mathbb{R}^{n}%
\mathbb{)+}W_{L^{\infty}}^{k}(\mathbb{R}^{n}\mathbb{)}\text{.}%
\]
Let $\Omega$ be a Lipschitz domain of finite measure, then the pair $\vec
{A}=(W_{L^{1}}^{k}(\Omega)$,$\mathring{W}_{L^{\infty}}^{k}(\Omega))$ is
ordered, and the corresponding version of (\ref{formula}) holds (cf.
\cite{bs}, \cite{cpc}), yielding (cf. \cite{jm})%
\[
\sum\limits_{p>1}\frac{1}{(1/p^{\prime})^{\alpha}}\vec{A}_{1/p^{\prime}%
,p;K}^{\blacktriangleleft}=W_{L(LogL)^{\alpha}}^{k}(\Omega)\text{.}%
\]

\end{example}

\begin{example}
We refer to \cite[Chapter 5, Section 6]{bs} for details and notation. Let $H$
be the Hilbert transform and let $N$ be the nontangential maximal function
(cf. \cite[(6.2), page 363]{bs}). Let $H(L^{1})$ be the Hardy space for
$L^{1}(\mathbb{R}),$ $H(L^{1})=\{f:f\in L^{1}(\mathbb{R})$ and $Hf\in
L^{1}(\mathbb{R})\},$ with
\[
\left\Vert f\right\Vert _{\operatorname{Re}H^{1}(\mathbb{R})}=\left\Vert
f\right\Vert _{L^{1}(\mathbb{R})}+\left\Vert Hf\right\Vert _{L^{1}%
(\mathbb{R})}.
\]
Let $\vec{A}$ be the pair $(H(L^{1})(\mathbb{R}),L^{\infty}(\mathbb{R}))$.
Then%
\[
K(t,f;\vec{A})\approx\int_{0}^{t}N(f)^{\ast}(s)ds.
\]
Therefore, for $\alpha>0,$%
\begin{align*}
\left\Vert f\right\Vert _{\sum\limits_{p>1}\frac{1}{(1/p^{\prime})^{\alpha}%
}\vec{A}_{1/p^{\prime},p;K}^{\blacktriangleleft}}  &  \approx\int_{0}%
^{1}N(f)^{\ast\ast}(s)(\log\frac{1}{s})^{\alpha-1}ds\\
&  \approx\left\Vert N(f)\right\Vert _{L(LogL)^{\alpha}+L^{\infty}}.
\end{align*}

\end{example}

\begin{example}
Many other computations of extrapolation spaces and the corresponding
extrapolation theorems can be read off from the known classical computations
of $K$-functionals (cf. \cite{bs}, \cite{nil}, \cite{ov}, \cite{tor} and the
references therein). For example, if we consider $\vec{A}=(L^{p}%
(\mathbb{R}^{n}\mathbb{)}$,$\mathring{W}_{L^{p}}^{k}(\mathbb{R}^{n}%
\mathbb{))}$, $1\leq p\leq\infty,$ then (cf. \cite[page 341]{bs}, and also
\cite{karamixi})
\begin{align*}
K(t^{k},f;\vec{A})  &  \approx\omega_{p}^{k}(t,f):=\sup_{|h|\leq t}\Vert
\Delta_{h}^{k}f\Vert_{L^{p}}\\
&  \approx\Big\{t^{-n}\int_{|h|\leq t}\Vert\Delta_{h}^{k}f\Vert_{p}%
^{p}dh\Big\}^{1/p},\;p\in\lbrack1,\infty]
\end{align*}
where $\Delta_{h}^{k}$ denotes the $k$-th difference operator defined
recursively by
\[
\Delta_{h}f(x)=\Delta_{h}^{1}f(x)=f(x+h)-f(x),\text{ }\Delta_{h}^{k}%
=\Delta_{h}^{1}\Delta_{h}^{k-1}.
\]
Therefore, for $0<s<1,$ $k\in\mathbb{N}$, the Besov space $\mathcal{B}%
_{p,q}^{sk}:=(L^{p}(\mathbb{R}^{n}\mathbb{)}$,$\mathring{W}_{Lp}%
^{k}(\mathbb{R}^{n}\mathbb{))}_{s,q;K}^{\blacktriangleleft}$ is defined by the
condition%
\begin{equation}
\Vert f\Vert_{\mathcal{B}_{p,q}^{sk}}\approx\lbrack(1-s)sq]^{1/q}k\left(
\int_{0}^{\infty}t^{-skq}\{t^{-n}\int_{|h|\leq t}\Vert\Delta_{h}^{k}f\Vert
_{p}^{p}dh\}^{q/p}\frac{dt}{t}\right)  ^{1/q}<\infty, \label{def1}%
\end{equation}
with the usual conventions if $q=\infty.$

It \ follows that for $\alpha>0,$
\[
\left\Vert f\right\Vert _{\sum\limits_{s}\frac{\mathcal{B}_{p,q}^{sk}%
}{s^{\alpha}}}\approx\int_{0}^{1}\omega_{p}^{k}(t,f)(\log\frac{1}{t}%
)^{\alpha-1}\frac{dt}{t}.
\]
In particular, for $\alpha=1,$ one obtains the Dini type of spaces
$\mathring{B}^{0,1,1}$ that were examined and applied to study the mixing
properties of vector fields by Bianchini \cite[see (1.16) and (4.6)]{bi}. In
fact, these spaces are also useful in Harmonic Analysis (cf. \cite{fe},
\cite{GM} and the references therein).
\end{example}

\begin{example}
\label{non-commutative} Suppose that $\mathcal{H}$ is a separable complex
Hilbert space. Recall that the \textit{Schatten-von Neumann class}
${{\mathfrak{S}}}^{p}$ consists of all compact operators $T:\;\mathcal{H}%
\rightarrow\mathcal{H}$ such that
\[
\Vert T\Vert_{{{\mathfrak{S}}}^{p}}:=\left(  \sum_{j=1}^{\infty}s_{j}%
(T)^{p}\right)  ^{\frac{1}{p}}<\infty,
\]
where $\{s_{j}(T)\}_{j=1}^{\infty}$ is the non-increasing sequence of
$s$-numbers of $T$ determined by the Schmidt expansion (cf. \cite{GK}). The
Schatten-von Neumann classes belong to the larger family of symmetrically
normed ideals. In particular, the so-called Matsaev operator ideals and their
duals, have been singled out and studied as suitable end point ideals for the
scale of Schatten-von Neumann classes ${{\mathfrak{S}}}^{p}$.

Let $\alpha>0$. The \textit{Matsaev ideal} $\mathcal{M}^{\alpha}$ is the ideal
of compact operators in a Hilbert space $\mathcal{H}$, endowed with the norm
\[
\Vert T\Vert _{\mathcal{M}^{\alpha }}:=\sum_{j=1}^{n}\frac{\log ^{\alpha
-1}(ej)s_{j}(T)}{j}\text{.}
\]
Similarly, the dual ideal to $\mathcal{M}^{\alpha},$ $\mathcal{M}^{\alpha
,\ast},$ consists of all compact operators $T$ such that
\[
\Vert T\Vert_{\mathcal{M}^{\alpha,\ast}}:=\underset{n\in\mathbb{N}}{\sup}%
\frac{\sum_{j=1}^{n}s_{j}(T)}{\log^{\alpha}(en)}<\infty\text{.}%
\]
By the well-known equivalence (see e.g. \cite{Konig})
\[
K(t,T;{{\mathfrak{S}}}^{1},{{\mathfrak{S}}}^{\infty})\approx\sum_{j=1}%
^{[t]}s_{j}(T),\text{ \ (where }[t]\text{ is the integer part of }t\text{),}%
\]
and straightforward direct calculations, we have
\[
({{\mathfrak{S}}}^{\infty},{{\mathfrak{S}}}^{1})_{1/p,p;K}^{\blacktriangleleft
}=({{\mathfrak{S}}}^{1},{{\mathfrak{S}}}^{\infty})_{1/p^{\prime}%
,p;K}^{\blacktriangleleft}={{\mathfrak{S}}}^{p}.
\]
Hence, as above, for each $p_{0}>1$
\[
\mathcal{M}^{\alpha}=\sum_{p>p_{0}}p({{\mathfrak{S}}}^{\infty},{{\mathfrak{S}%
}}^{1})_{1/p,p;K}^{\blacktriangleleft}=\sum_{p>p_{0}}p{{\mathfrak{S}}}^{p}%
\]
and
\[
\mathcal{M}^{\alpha,\ast}=\Delta_{1<p<p_{0}}((p-1)^{\alpha}({{\mathfrak{S}}%
}^{\infty},{{\mathfrak{S}}}^{1})_{1/p,p;K}^{\blacktriangleleft})=\Delta
_{1<p<p_{0}}((p-1)^{\alpha}{{\mathfrak{S}}}^{p}),
\]
(see \cite[5.2]{M94}). These relations give a version of Yano's theorem in the
non-commutative setting, where the ideals $\mathcal{M}^{\alpha}$ and
$\mathcal{M}^{\alpha,\ast}$ play the role of the $L(LogL)^{\alpha}$ and
$e^{L^{1/\alpha}}$ spaces (cf. \cite[Theorem~42]{M94}).
\end{example}

The key to prove these results is that we can reduce the computation of the
$\sum$- and $\Delta$-functors as follows. Let $M$ and $N$ be
tempered\footnote{We say that $M$ is tempered if $M(2\theta)\approx
M(\theta),$ when $\theta$ is close to zero, and $M(1-2(1-\theta))\approx
M(\theta),$ when $\theta$ is close to $1.$} weights (cf. \cite{jm}), then
\begin{equation}
\sum\limits_{\theta}{\small M(\theta)\vec{A}}_{\theta,q(\theta);K}%
^{\blacktriangleleft}{\small =}\sum\limits_{\theta}{\small M(\theta)\vec{A}%
}_{\theta,1;J}^{\blacktriangleleft}{\small ,}\text{ }{\large \Delta}_{\theta
}{\small (N(\theta)\vec{A}}_{\theta,q(\theta);K}^{\blacktriangleleft
}{\small )=}{\large \Delta}_{\theta}{\small (N(\theta)\vec{A}}_{\theta
,\infty;K}^{\blacktriangleleft}{\small ).} \label{validity}%
\end{equation}

With this reduction at hand we can compute the $\sum$- and ${\large \Delta}%
$-functors via their Fubini type properties described in the next two
theorems. Before we go to the statement and proof of these results we recall
the concept of characteristic function of an interpolation functor \cite{jan},
which we shall use freely in what follows.

\begin{definition}
\label{def:cara}Let $F$ be an interpolation functor\footnote{cf. Appendix
\ref{apendixA:2}.}, then the characteristic function $\rho$ of $F$ satisfies
(cf. \cite{jan}, \cite[(2.7), page 11]{jm})%
\[
F(\mathbb{C},\frac{1}{t}\mathbb{C})=\frac{1}{\rho(t)}\mathbb{C},\;\;t>0.
\]

\end{definition}

\begin{theorem}
\label{sumformula}(Jawerth-Milman \cite[Theorem 3.1 (i), page 20]{jm}) Let
$\vec{A}$ be a Banach pair, and let $\{\rho_{\theta}\}_{\theta\in(0,1)}$ be a
family of quasi-concave functions. Suppose that $\rho(t)=\sup_{\theta\in
(0,1)}\rho_{\theta}(t)$ is finite at one point (and hence at all points). Then%
\[
\sum\limits_{\theta}\left(  \vec{A}_{\rho_{\theta},1;J}\right)  =\vec{A}%
_{\rho,1;J,}\text{ with }\left\Vert f\right\Vert _{\sum\limits_{\theta}\left(
\vec{A}_{\rho_{\theta},1;J}\right)  }=\left\Vert f\right\Vert _{\vec{A}%
_{\rho,1;J}},
\]
where for a quasi-concave function $\tau,$ we let $\vec{A}_{\tau,1;J}$ be the
space of elements that can be represented by%
\[
f=\int_{0}^{\infty}u(s)\frac{ds}{s},
\]
where $u:(0,\infty)\rightarrow\Delta(\vec{A})$ is strongly measurable and such
that
\[
\int_{0}^{\infty}\frac{J(s,u(s);\vec{A})}{\tau(s)}\frac{ds}{s}<\infty,
\]
with%
\[
\left\Vert f\right\Vert _{\vec{A}_{\tau,1;J}}=\inf_{x=\int_{0}^{\infty
}u(s)\frac{ds}{s}}\{\int_{0}^{\infty}\frac{J(s,u(s);\vec{A})}{\tau(s)}%
\frac{ds}{s}\}.
\]

\end{theorem}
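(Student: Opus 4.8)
The plan is to prove the two inclusions
\[
\sum\nolimits_{\theta}\bigl(\vec{A}_{\rho_{\theta},1;J}\bigr)\hookrightarrow\vec{A}_{\rho,1;J}
\qquad\text{and}\qquad
\vec{A}_{\rho,1;J}\hookrightarrow\sum\nolimits_{\theta}\bigl(\vec{A}_{\rho_{\theta},1;J}\bigr),
\]
each with norm constant $1$, so that the two norms coincide. Recall that by definition of the $\sum$-functor an element $f\in\sum_{\theta}(\vec{A}_{\rho_{\theta},1;J})$ admits representations $f=\sum_{\theta}f_{\theta}$ (suitably interpreted as an integral/series over the parameter $\theta$ with values in $\Delta(\vec A)$, convergent in $\Sigma(\vec A)$), and its norm is the infimum of $\sum_{\theta}\|f_{\theta}\|_{\vec{A}_{\rho_{\theta},1;J}}$ over all such representations.

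For the first inclusion I would start from an arbitrary representation $f=\sum_{\theta}f_{\theta}$ and, for each $\theta$, choose a near-optimal $J$-representation $f_{\theta}=\int_0^{\infty}u_{\theta}(s)\,\frac{ds}{s}$ with $\int_0^{\infty}\frac{J(s,u_{\theta}(s);\vec A)}{\rho_{\theta}(s)}\,\frac{ds}{s}\le(1+\varepsilon)\|f_{\theta}\|_{\vec{A}_{\rho_{\theta},1;J}}$. Superposing these gives a $J$-representation $f=\int_0^{\infty}u(s)\,\frac{ds}{s}$ with $u(s)=\sum_{\theta}u_{\theta}(s)$. Using subadditivity of $J(s,\cdot;\vec A)$ in its second argument together with the pointwise bound $\rho(s)=\sup_{\theta}\rho_{\theta}(s)\ge\rho_{\theta}(s)$, one estimates
\[
\int_0^{\infty}\frac{J(s,u(s);\vec A)}{\rho(s)}\,\frac{ds}{s}
\le\sum_{\theta}\int_0^{\infty}\frac{J(s,u_{\theta}(s);\vec A)}{\rho(s)}\,\frac{ds}{s}
\le\sum_{\theta}\int_0^{\infty}\frac{J(s,u_{\theta}(s);\vec A)}{\rho_{\theta}(s)}\,\frac{ds}{s}
\le(1+\varepsilon)\sum_{\theta}\|f_{\theta}\|_{\vec{A}_{\rho_{\theta},1;J}},
\]
and letting $\varepsilon\to0$ and taking the infimum over representations gives $\|f\|_{\vec A_{\rho,1;J}}\le\|f\|_{\sum_{\theta}(\vec A_{\rho_{\theta},1;J})}$. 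This direction is essentially bookkeeping and should be routine.

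The converse inclusion is the substantive part, and I expect it to be the main obstacle. The issue is that for a given $f\in\vec A_{\rho,1;J}$ with a near-optimal representation $f=\int_0^{\infty}u(s)\,\frac{ds}{s}$ satisfying $\int_0^{\infty}\frac{J(s,u(s);\vec A)}{\rho(s)}\,\frac{ds}{s}<\infty$, one must disassemble it into pieces attributable to the individual functions $\rho_{\theta}$ without losing any mass. The key point is the elementary fact that $\rho=\sup_{\theta}\rho_{\theta}$ can be recovered from the $\rho_{\theta}$ through a partition of the $s$-axis: for each $s$ pick (measurably, or up to $\varepsilon$) a $\theta(s)$ with $\rho_{\theta(s)}(s)\ge(1-\varepsilon)\rho(s)$; then split $u$ according to the level sets $E_{\theta}=\{s:\theta(s)=\theta\}$, setting $u_{\theta}=u\cdot\mathbf 1_{E_{\theta}}$ and $f_{\theta}=\int_0^{\infty}u_{\theta}(s)\,\frac{ds}{s}$. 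On $E_{\theta}$ we have $1/\rho_{\theta}(s)\le(1-\varepsilon)^{-1}/\rho(s)$, so
\[
\|f_{\theta}\|_{\vec A_{\rho_{\theta},1;J}}\le\int_{E_{\theta}}\frac{J(s,u(s);\vec A)}{\rho_{\theta}(s)}\,\frac{ds}{s}
\le(1-\varepsilon)^{-1}\int_{E_{\theta}}\frac{J(s,u(s);\vec A)}{\rho(s)}\,\frac{ds}{s},
\]
and summing over $\theta$ recovers the full integral, giving $\sum_{\theta}\|f_{\theta}\|_{\vec A_{\rho_{\theta},1;J}}\le(1-\varepsilon)^{-1}\|f\|_{\vec A_{\rho,1;J}}$ after taking the infimum over representations. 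Letting $\varepsilon\to0$ finishes the proof.

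Two technical points I would need to handle carefully. First, the parameter set for $\theta$ is a continuum $(0,1)$ and the $\sum$-functor over it requires the "sum" $\sum_{\theta}f_{\theta}$ and the selection $s\mapsto\theta(s)$ to be given a rigorous measure-theoretic meaning; one discretizes $(0,1)$ (say by choosing a countable dense set of $\theta$'s, or by a partition argument on the range of $\rho_{\theta}$), exploiting that each $\rho_{\theta}$ is quasi-concave — hence continuous on $(0,\infty)$ — and that the family is nice enough for $\sup_{\theta}\rho_{\theta}$ to be measurable (this is where "finite at one point, hence at all points" by quasi-concavity is used: a pointwise supremum of quasi-concave functions that is finite somewhere is finite everywhere and quasi-concave). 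Second, one must confirm that the $E_{\theta}$-restriction $u_{\theta}$ is still strongly measurable with values in $\Delta(\vec A)$ and that the integrals $f_{\theta}=\int u_{\theta}\,\frac{ds}{s}$ converge in $\Sigma(\vec A)$ so that $f=\sum_{\theta}f_{\theta}$ is a legitimate $\sum$-representation — this follows from the finiteness of $\int\frac{J(s,u(s);\vec A)}{\rho(s)}\,\frac{ds}{s}$ and the elementary inequality $\|v\|_{\Sigma(\vec A)}\le c\min(1,1/s)J(s,v;\vec A)$. Once these measurability and convergence issues are dispatched, both inclusions hold with constant $1$ and the norm identity follows.
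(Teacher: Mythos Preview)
Your first inclusion is correct and close in spirit to the paper's, though the paper argues even more simply: since $\rho\ge\rho_{\theta}$, any $J$-representation valid for $\vec A_{\rho_{\theta},1;J}$ is already a $J$-representation for $\vec A_{\rho,1;J}$ with no larger cost, so $\|f_{\theta}\|_{\vec A_{\rho,1;J}}\le\|f_{\theta}\|_{\vec A_{\rho_{\theta},1;J}}$ directly; then the triangle inequality in $\vec A_{\rho,1;J}$ finishes it, without ever combining the $u_{\theta}$'s into a single $u$.

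For the converse you take a genuinely different route. The paper avoids the measurable-selection and countability issues you flag entirely, by means of a one-line pointwise estimate: for any $g\in\Delta(\vec A)$ and any $t>0$ one has $\|g\|_{\vec A_{\rho_{\theta},1;J}}\le J(t,g;\vec A)/\rho_{\theta}(t)$ (this is the trivial representation concentrated at $s=t$), hence $\|g\|_{\sum_{\theta}(\vec A_{\rho_{\theta},1;J})}\le\inf_{\theta}J(t,g;\vec A)/\rho_{\theta}(t)=J(t,g;\vec A)/\rho(t)$. Applying this with $g=u(s)$ and $t=s$, and then integrating in $s$ (vector-valued triangle inequality), gives
\[
\|f\|_{\sum_{\theta}(\vec A_{\rho_{\theta},1;J})}\le\int_{0}^{\infty}\|u(s)\|_{\sum_{\theta}(\vec A_{\rho_{\theta},1;J})}\,\frac{ds}{s}\le\int_{0}^{\infty}\frac{J(s,u(s);\vec A)}{\rho(s)}\,\frac{ds}{s},
\]
which is exactly what you need. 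This is cleaner than your partition argument: it never asks you to choose a $\theta(s)$, never needs the index set to be reduced to a countable subfamily, and delivers the norm-$1$ embedding without any discretization constants. Your approach is not wrong, but the technical points you identify (measurable selection over a continuum, ensuring only countably many $E_{\theta}$ carry mass) are real and require the quasi-concavity/continuity patching you sketch; the paper's device makes all of that unnecessary.
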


\begin{proof}
We repeat with full details\footnote{In their younger days, before changing
their ways, Jawerth-Milman had adopted the terse writing style of \cite{BL}.}
the elementary argument given in \cite{jm} since it illustrates why the
$(1,J)$ functor \textquotedblleft commutes\textquotedblright\ with $\sum$ (Fubini).

By definition $\rho(t)\geq\rho_{\theta},$ for all $\theta\in(0,1),$ therefore,
it is easy to see that $\left\Vert f\right\Vert _{\vec{A}_{\rho,1;J}}%
\leq\left\Vert f\right\Vert _{\vec{A}_{\rho_{\theta},1;J}}.$ Indeed, if
$f\in\vec{A}_{\rho_{\theta},1;J},$ then given $\varepsilon>0,$ we can select a
representation $f=\int_{0}^{\infty}u(s)\frac{ds}{s}$ such that,
\[
\int_{0}^{\infty}\frac{J(s,u(s);\vec{A})}{\rho_{\theta}(s)}\frac{ds}{s}%
\leq(1+\varepsilon)\left\Vert f\right\Vert _{\vec{A}_{\rho_{\theta},1;J}}.
\]
Therefore,%
\[
\left\Vert f\right\Vert _{\vec{A}_{\rho,1;J}}\leq\int_{0}^{\infty}%
\frac{J(s,u(s);\vec{A})}{\rho(s)}\frac{ds}{s}\leq\int_{0}^{\infty}%
\frac{J(s,u(s);\vec{A})}{\rho_{\theta}(s)}\frac{ds}{s}\leq(1+\varepsilon
)\left\Vert f\right\Vert _{\vec{A}_{\rho_{\theta},1;J}}.
\]
Now, letting $\varepsilon\rightarrow0,$ we find that for all $\theta\in(0,1),$%
\[
\left\Vert f\right\Vert _{\vec{A}_{\rho,1;J}}\leq\left\Vert f\right\Vert
_{\vec{A}_{\rho_{\theta},1;J}}.
\]
The previous inequality can be now extended to all of $\sum\limits_{\theta
}\left(  \vec{A}_{\rho_{\theta},1;J}\right)  .$ Indeed, let $f\in
\sum\limits_{\theta}\left(  \vec{A}_{\rho_{\theta},1;J}\right)  .$ Select a
decomposition $f=\sum\limits_{\theta}f_{\theta}$ such
that\footnote{Informally, we need to use an epsilon argument, etc.}
$\left\Vert f\right\Vert _{\sum\limits_{\theta}\left(  \vec{A}_{\rho_{\theta
},1;J}\right)  }\approx\sum\limits_{\theta}\left\Vert f_{\theta}\right\Vert
_{\left\Vert f\right\Vert _{\vec{A}_{\rho_{\theta},1;J}}}.$ Then,%
\begin{align*}
\left\Vert f\right\Vert _{\vec{A}_{\rho,1;J}}  &  =\left\Vert \sum
\limits_{\theta}f_{\theta}\right\Vert _{\vec{A}_{\rho,1;J}}\\
&  \leq\sum\limits_{\theta}\left\Vert f_{\theta}\right\Vert _{\vec{A}%
_{\rho,1;J}}\\
&  \leq\sum\limits_{\theta}\left\Vert f_{\theta}\right\Vert _{\vec{A}%
_{\rho_{\theta},1;J}}\\
&  \preceq\left\Vert f\right\Vert _{\sum\limits_{\theta}\left(  \vec{A}%
_{\rho_{\theta},1;J}\right)  }.
\end{align*}
To show the converse inequality let us first observe that for each $f\in
\Delta(\vec{A}),\theta\in(0,1),t>0,$%
\begin{align*}
\left\Vert f\right\Vert _{\sum\limits_{\theta}\left(  \vec{A}_{\rho_{\theta
},1;J}\right)  }  &  \leq\left\Vert f\right\Vert _{\vec{A}_{\rho_{\theta}%
,1;J}}\\
&  \leq\frac{J(t,f;\vec{A})}{\rho_{\theta}(t)}%
\end{align*}
(see \cite[Theorem 3.11.2 (4), page 64]{BL}). Therefore,
\begin{align*}
\left\Vert f\right\Vert _{\sum\limits_{\theta}\left(  \vec{A}_{\rho_{\theta
},1;J}\right)  }  &  \leq\inf_{\theta}\{\frac{1}{\rho_{\theta}(t)}%
\}J(t,f;\vec{A})\\
&  \leq\frac{1}{\rho(t)}J(t,f;\vec{A}).
\end{align*}
Let $f\in\vec{A}_{\rho,1;J},\varepsilon>0,$ and select a decomposition
$f=\int_{0}^{\infty}u(s)\frac{ds}{s}$ such that
\[
\int_{0}^{\infty}\frac{J(s,u(s);\vec{A})}{\rho(s)}\frac{ds}{s}\leq
(1+\varepsilon)\left\Vert f\right\Vert _{\vec{A}_{\rho,1;J}}.
\]
We have
\begin{align*}
\left\Vert f\right\Vert _{\sum\limits_{\theta}\left(  \vec{A}_{\rho_{\theta
},1;J}\right)  }  &  \leq\left\Vert \int_{0}^{\infty}u(s)\frac{ds}%
{s}\right\Vert _{\sum\limits_{\theta}\left(  \vec{A}_{\rho_{\theta}%
,1;J}\right)  }\\
&  \leq\int_{0}^{\infty}\left\Vert u(s)\right\Vert _{\sum\limits_{\theta
}\left(  \vec{A}_{\rho_{\theta},1;J}\right)  }\frac{ds}{s}\\
&  \leq\int_{0}^{\infty}\frac{1}{\rho(s)}J(s,u(s);\vec{A})\frac{ds}{s}\\
&  \leq(1+\varepsilon)\left\Vert f\right\Vert _{\vec{A}_{\rho,1;J}}.
\end{align*}
Now we can safely let $\varepsilon\rightarrow0$ to conclude the proof.
\end{proof}

Likewise, and even easier \textquotedblleft Fubini argument\textquotedblright%
\ but this time using the $L^{\infty}$-norm (informally: \textquotedblleft sup
commutes with sup\textquotedblright) shows that (cf. \cite[Theorem 3.1 (ii),
page 20-21]{jm})

\begin{theorem}
\label{intersectformula}Suppose that $\rho^{\ast}(t)=\inf_{\alpha\in
(0,1)}\{\rho_{\alpha}(t)\}$ is non-zero at a point, then%
\[
\Delta_{\alpha\in(0,1)}\left(  \vec{A}_{\rho_{\alpha},\infty;K}\right)
=\vec{A}_{\rho^{\ast},\infty;K},
\]
where for a quasi-concave function $\tau,$%
\[
\vec{A}_{\tau,\infty;K}=\{f:\left\Vert x\right\Vert _{\vec{A}_{\tau,\infty;K}%
}:=\sup_{s>0}\{\frac{K(s,f;\vec{A})}{\tau(s)}\}<\infty\}.
\]

\end{theorem}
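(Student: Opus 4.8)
The plan is to unwind both sides of the claimed identity to one and the same explicit expression in terms of the $K$-functional, and then invoke the elementary fact that a supremum over a product of index sets may be evaluated iteratively in either order (``sup commutes with sup''), exactly as the paragraph preceding the statement advertises. Concretely, the $\Delta$-functor assigns to the family $\{\vec A_{\rho_\alpha,\infty;K}\}_{\alpha\in(0,1)}$ — all of which are continuously embedded in $A_0+A_1$, since $\|f\|_{A_0+A_1}=K(1,f;\vec A)\le\rho_\alpha(1)\|f\|_{\vec A_{\rho_\alpha,\infty;K}}$ — the space of those $f\in A_0+A_1$ lying in every $\vec A_{\rho_\alpha,\infty;K}$, normed by $\|f\|_{\Delta_\alpha(\vec A_{\rho_\alpha,\infty;K})}=\sup_{\alpha\in(0,1)}\|f\|_{\vec A_{\rho_\alpha,\infty;K}}$. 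So the whole proof consists of computing this supremum.

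First I would record the preliminary point that $\rho^\ast$ is again quasi-concave. Each $\rho_\alpha$ is non-decreasing and each $t\mapsto\rho_\alpha(t)/t$ is non-increasing; the pointwise infimum of non-decreasing functions is non-decreasing, and the pointwise infimum of the non-increasing functions $t\mapsto\rho_\alpha(t)/t$ is non-increasing, so $\rho^\ast$ is non-decreasing with $\rho^\ast(t)/t$ non-increasing. It is finite because $\rho^\ast\le\rho_\alpha$, and if $\rho^\ast(t_0)>0$ then $\rho^\ast(t)\ge\rho^\ast(t_0)>0$ for $t\ge t_0$ and $\rho^\ast(t)\ge t\,\rho^\ast(t_0)/t_0>0$ for $t\le t_0$; thus the hypothesis forces $\rho^\ast>0$ on all of $(0,\infty)$, so the right-hand space $\vec A_{\rho^\ast,\infty;K}$ is a well-defined exact $K$-space.

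Now, for any $f\in A_0+A_1$, using that $K(s,f;\vec A)\ge 0$ and each $\rho_\alpha(s)>0$ (so that $\sup_\alpha 1/\rho_\alpha(s)=1/\inf_\alpha\rho_\alpha(s)=1/\rho^\ast(s)$), I would write
\[
\|f\|_{\Delta_\alpha(\vec A_{\rho_\alpha,\infty;K})}
=\sup_{\alpha}\ \sup_{s>0}\frac{K(s,f;\vec A)}{\rho_\alpha(s)}
=\sup_{s>0}\ \sup_{\alpha}\frac{K(s,f;\vec A)}{\rho_\alpha(s)}
=\sup_{s>0}\frac{K(s,f;\vec A)}{\rho^\ast(s)}
=\|f\|_{\vec A_{\rho^\ast,\infty;K}} .
\]
Since the two norms agree on $A_0+A_1$, the underlying sets coincide as well, and the theorem follows with equality of norms.

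There is essentially no serious obstacle here: the only points requiring (routine) care are the verification that $\rho^\ast$ stays quasi-concave and strictly positive everywhere — so that the right-hand space is legitimate — and the bookkeeping that the $\Delta$-norm is literally the supremum of the individual $\vec A_{\rho_\alpha,\infty;K}$-norms, which is what makes the ``sup of sups'' interchange nothing more than Fubini for suprema. In contrast to the proof of Theorem~\ref{sumformula}, no $\varepsilon$-approximation of $J$-decompositions and no interchange of an integral with a sum is involved, which is precisely why this companion statement is billed as ``even easier.''
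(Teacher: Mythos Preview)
Your proof is correct and follows essentially the same ``sup commutes with sup'' approach as the paper. The only cosmetic difference is that the paper writes out the two inequalities separately (first $\Delta\subset\vec A_{\rho^\ast,\infty;K}$ via $\rho^\ast\le\rho_\alpha$, then the reverse by bounding $K(t,f)/\rho^\ast(t)$ through $\inf_\alpha\rho_\alpha(t)/\rho^\ast(t)=1$), whereas you compress both directions into a single equality chain; you also make explicit the routine check that $\rho^\ast$ is quasi-concave and strictly positive everywhere, which the paper leaves implicit.
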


\begin{proof}
Since for all indices $\alpha\in(0,1),$ and for all $t>0,$ $\rho^{\ast}%
(t)\leq\rho_{\alpha}(t),$ we readily see that%
\[
\left\Vert f\right\Vert _{\Delta\left(  \vec{A}_{\rho_{\alpha},\infty
;K}\right)  }=\sup_{\alpha\in(0,1)}\sup_{s>0}\frac{K(s,f;\vec{A})}%
{\rho_{\alpha}(s)}\leq\sup_{s>0}\frac{K(s,f;\vec{A})}{\rho^{\ast}%
(s)}=\left\Vert f\right\Vert _{\vec{A}_{\rho^{\ast},\infty;K}}.
\]
Now, for all $t>0,$ we have
\begin{align*}
K(t,f;\vec{A})  &  \leq\rho_{\alpha}(t)\left\Vert f\right\Vert _{\vec{A}%
_{\rho_{\alpha},\infty;K}}\\
&  \leq\rho_{\alpha}(t)\sup_{\alpha\in(0,1)}\left\Vert f\right\Vert _{\vec
{A}_{\rho_{\alpha},\infty;K}}\\
&  =\rho_{\alpha}(t)\left\Vert f\right\Vert _{\Delta\left(  \vec{A}%
_{\rho_{\alpha},\infty;K}\right)  }.
\end{align*}
Thus, for all indices $\alpha,$ and for all $t>0,$%
\[
\frac{K(t,f;\vec{A})}{\rho^{\ast}(t)}\leq\frac{\rho_{\alpha}(t)}{\rho^{\ast
}(t)}\left\Vert f\right\Vert _{\Delta\left(  \vec{A}_{\rho_{\alpha},\infty
;K}\right)  }.
\]
Taking the infimum over all indices $\alpha,$ we get%
\begin{align*}
\frac{K(t,f;\vec{A})}{\rho^{\ast}(t)}  &  \leq\left\Vert f\right\Vert
_{\Delta\left(  \vec{A}_{\rho_{\alpha},\infty;K}\right)  }\inf_{0<\alpha
<1}\{\frac{\rho_{\alpha}(t)}{\rho^{\ast}(t)}\}\\
&  =\left\Vert f\right\Vert _{\Delta\left(  \vec{A}_{\rho_{\alpha},\infty
;K}\right)  },
\end{align*}
and the result follows taking the supremum over all $t>0.$
\end{proof}

\begin{problem}
\label{prob:multiplier}(\textbf{The multiplier problem I}) Give a complete
characterization of the weights $M(\theta)$ for which the formulae
(\ref{validity}) holds. More generally, let $\{\rho_{\theta}\}$ be a family of
concave functions and consider interpolation functors $\{F_{\rho_{\theta}}\}$
with characteristic functions $\rho_{\theta}.$ We ask to characterize the
weights $M$ and $N$ such that, for all Banach pairs $\vec{A},$ we have
\begin{equation}
\sum_{\theta}{\small M(\theta)\vec{A}}_{\rho_{\theta},1;J}^{\blacktriangleleft
}{\small =}\sum_{\theta}{\small M(\theta)F}_{\rho_{\theta}}{\small (\vec
{A}),\Delta(N(\theta)\vec{A}}_{\rho_{\theta},\infty;K}^{\blacktriangleleft
}{\small )=\Delta(N(\theta)F}_{\rho_{\theta}}{\small (\vec{A})).} \label{foto}%
\end{equation}
The same question for more general extrapolation functors (cf. Section
\ref{apendixA:2}), e.g. the $\Sigma_{p}$-methods (cf. \cite[(2.6), page
10]{jm} and \cite{karami}), the $\Delta_{p}$-methods (cf. \cite{karami}), the
extrapolation methods of Astashkin-Lykov (cf. \cite{AL}, \cite{AL2009}, etc.).
\end{problem}

\begin{remark}
In connection to Problem \ref{prob:multiplier} we should like to mention some
cases where progress has been made. Consider the pair $(L^{1}(0,1),L^{\infty
}(0,1))$, then it is shown (cf. \cite[Theorem 3.5]{AL2017}) that%
\begin{equation}
\Delta_{1<p<\infty}(\omega(p)(L^{1},L^{\infty})_{1/p^{\prime},\infty
;K}^{\blacktriangleleft})=\Delta_{1<p<\infty}(\omega(p)(L^{1},L^{\infty
})_{1/p^{\prime},p;K}^{\blacktriangleleft}), \label{eq1}%
\end{equation}
holds for weights of the form $\omega(p)=\psi(e^{-p})$, where $\psi$ is an
increasing positive function on $[0,1]$ such that for some $C>0,$
$\psi(et)\leq C\psi(t)$, $0<t\leq1/e$. It is worth to note that, in general,
these weights fail to be tempered. Indeed, it is easy to see that the function
$\omega(1/(1-\theta))$ is tempered at $0$ if and only if $\omega
(2p)\approx\omega(p)$, when $p$ is sufficiently large. Therefore, this is
equivalent to the condition $\psi(t)\leq C\psi(t^{2})$ for $0<t\leq1$.
Moreover, formula \eqref{eq1} holds even for weights decreasing at a much
faster rate at infinity, e.g. weights of the form $\omega(p)=\psi(e^{-e^{p}}%
)$. Once again we refer to \cite{AL2017}. Likewise, one can ask for a
characterization of the weights $\omega(p)$ such that we have
\begin{equation}
\sum_{1<p<\infty}(\omega(p)(L^{1},L^{\infty})_{1/p^{\prime},1;J}%
^{\blacktriangleleft})=\sum_{1<p<\infty}(\omega(p)(L^{1},L^{\infty
})_{1/p^{\prime},p;J}^{\blacktriangleleft}). \label{eq2}%
\end{equation}
It is known (cf. \cite[Theorem 3]{LykovMathSb}) that formula \eqref{eq2} holds
for weights $\omega$ of the form $\omega(p)=\psi(p/(p-1))$, where
$\psi:\,[1,\infty)\rightarrow\lbrack1,\infty)$ is an increasing positive
function such that for some $C>0,$ $\psi(x+e^{-x})\leq C\psi(x)$, $x\geq1$.
Note that in this example $\omega(p)$ is tempered at $1$ if and only if there
exists $C>0,$ such that $\psi(2x)\leq C\psi(x)$ for sufficiently large $x$.
Therefore, \eqref{eq2} can be valid for a rather wide class of non-tempered weights.
\end{remark}

\begin{problem}
Formulate suitable versions of Theorems \ref{sumformula} and
\ref{intersectformula} for other extrapolation functors (cf. the previous Problem).
\end{problem}

\begin{problem}
(Open Ended) There is a natural duality associated to Theorems
\ref{sumformula} and \ref{intersectformula} (cf. \cite{jm}) but more generally
the r\^{o}le of duality in extrapolation theory has not been studied systematically.
\end{problem}

\section{Yano's theorem for weak type operators\label{sec:discutido}}

The general method to prove Yano's theorem indicated in Section
\ref{sec:improve} shows, as a bonus, that we can extrapolate replacing strong
type by weak type. Indeed, this is a direct consequence of (\ref{validity}).
We develop this point in detail. Once again the underlying measure space will
be $(0,1)$ with Lebesgue measure, but we now assume that $T$ satisfies
\begin{equation}
T:L(p,1)\rightarrow L(p,\infty),\text{ with }\left\Vert T\right\Vert
_{L(p,1)\rightarrow L(p,\infty)}\leq\frac{1}{(p-1)^{\alpha}},\text{ for
}1<p<p_{0}. \label{yano6}%
\end{equation}
Before we go on we remark that we need to be fastidious about how we define
the norms of the Lorentz spaces. We shall let\footnote{It is easy to give
$L(p,1)$ a more familiar norm, as we now indicate. For this purpose we we may
restrict ourselves, without loss of generality, to functions $f$ that are
integrable with compact support. Then, integrating by parts we find%
\begin{align*}
\frac{1}{pp^{\prime}}\int_{0}^{\infty}f^{\ast\ast}(s)s^{1/p}\frac{ds}{s}  &
=\frac{1}{p}\int_{0}^{\infty}f^{\ast}(s)s^{1/p}\frac{ds}{s}\\
&  =\int_{0}^{\infty}f^{\ast}(s)ds^{1/p}.
\end{align*}
}%
\begin{equation}
L(p,1)=(L^{1},L^{\infty})_{1/p^{\prime},1;K}^{\blacktriangleleft
}=\{f:\left\Vert f\right\Vert _{L(p,1)}=\frac{1}{pp^{\prime}}\int_{0}^{\infty
}f^{\ast\ast}(s)s^{1/p}\frac{ds}{s}<\infty\}, \label{yano7}%
\end{equation}%
\begin{equation}
L(p,\infty)=\{f:\left\Vert f\right\Vert _{L(p,\infty)}=\sup_{t>0}\{f^{\ast
\ast}(s)s^{1/p}\}=(L^{1},L^{\infty})_{1/p^{\prime},\infty;K}%
^{\blacktriangleleft}. \label{yano8}%
\end{equation}
The reason we use (\ref{yano7}) to define the $L(p,1)$-spaces is that we can
then apply directly the strong form of the fundamental Lemma (cf. \cite{CJM90}
and Section \ref{sec:sfl} below) to obtain\footnote{A more general result is
discussed in detail in Example \ref{exa:sfl} below.} that, with constants of
equivalence independent of $p,$%
\begin{equation}
L(p,1)=(L^{1},L^{\infty})_{1/p^{\prime},1;K}^{\blacktriangleleft}%
=(L^{1},L^{\infty})_{1/p^{\prime},1;J}^{\blacktriangleleft}. \label{yano5}%
\end{equation}
At this point we see that%
\[
T:\sum_{1<p<p_{0}}\frac{(L^{1},L^{\infty})_{1/p^{\prime},1;J}%
^{\blacktriangleleft}}{(p-1)^{\alpha}}\rightarrow\sum_{1<p<p_{0}}%
(L^{1},L^{\infty})_{1/p^{\prime},\infty;K}^{\blacktriangleleft}=\sum
_{1<p<p_{0}}L(p,\infty).
\]
Now using Theorem \ref{sumformula} and the reiteration formula (cf. \cite{jm}
and the recent extensive discussion in \cite{ALM})%
\[
\sum_{1<p<p_{0}}\frac{(L^{1},L^{\infty})_{1/p^{\prime},1;J}%
^{\blacktriangleleft}}{(p-1)^{\alpha}}=\sum_{1<p<\infty}\frac{(L^{1}%
,L^{\infty})_{1/p^{\prime},1;J}^{\blacktriangleleft}}{(p-1)^{\alpha}},
\]
we have%
\[
\sum_{1<p<p_{0}}\frac{(L^{1},L^{\infty})_{1/p^{\prime},1;J}%
^{\blacktriangleleft}}{(p-1)^{\alpha}}=L(LogL)^{\alpha},\text{ }%
\sum_{1<p<p_{0}}L(p,\infty)=L^{1}.
\]

We can deal in a similar fashion with the second part of Yano's theorem.

\subsection{Limiting Spaces}

To describe extrapolation spaces like the $L(LogL)^{\alpha}(0,1)$ spaces it is
useful to introduce variants of the usual interpolation constructions. Here we
consider the simplest such constructions, the $\langle\vec{X}\rangle_{w,q;K}$
spaces (cf. Appendix~\ref{apendixA:1} below, moreover, we refer to \cite{ALM}
for a comprehensive discussion of limiting spaces that can be described as
extrapolation spaces). Let $w:(0,1)\rightarrow(0,\infty)$ and define
\[
\langle\vec{X}\rangle_{w,q;K}=\{f\in X_{0}+X_{1}:\Vert x\Vert_{\langle\vec
{X}\rangle_{w,q;K}}:=\{\int_{0}^{1}(w(s)K(s,f;\vec{X}))^{q}\frac{ds}%
{s}\}^{1/q}<\infty\}.
\]
For example, if $w_{\theta}(s)=s^{-\theta},0\leq\theta\leq1,$ (cf.
\eqref{dela})
\[
\langle\vec{X}\rangle_{w_{\theta},q;K}:=\langle\vec{X}\rangle_{\theta
,q;K}=\{f\in X_{0}+X_{1}:\Vert x\Vert_{\langle\vec{X}\rangle_{\theta,q;K}%
}:=\Phi_{\theta,q}(\chi_{(0,1)}K(s,f;\vec{X}))<\infty\},
\]
Then, for every $p_{0}>1$ and $\alpha>0,$%
\begin{align}
\sum_{1<p<p_{0}}\frac{p^{\alpha}(L^{1},L^{\infty})_{1/p^{\prime}%
,1;J}^{\blacktriangleleft}}{(p-1)^{\alpha}}  &  =\{f:\int_{0}^{1}f^{\ast\ast
}(s)\left(  \log\frac{1}{s}\right)  ^{\alpha-1}ds<\infty\}\nonumber\\
&  =L(LogL)^{\alpha}. \label{llog}%
\end{align}
Thus,%
\[
\sum_{1<p<p_{0}}\frac{p^{\alpha}(L^{1},L^{\infty})_{1/p^{\prime}%
,1;J}^{\blacktriangleleft}}{(p-1)^{\alpha}}=\langle L^{1},L^{\infty}%
\rangle_{\left(  \log\frac{1}{s}\right)  ^{\alpha-1},1:K},
\]
and in the special case $\alpha=1,$%
\[
\sum_{1<p<p_{0}}\frac{p(L^{1},L^{\infty})_{1/p^{\prime},1;J}%
^{\blacktriangleleft}}{p-1}=\langle L^{1},L^{\infty}\rangle_{w_{0}%
,1:K}:=\langle L^{1},L^{\infty}\rangle_{0,1:K}.
\]

\begin{problem}
The characterization of $L(LogL)^{\alpha}$ as an extrapolation space for the
$\sum$-method given by (\ref{llog}) leads to the following reiteration
formulae, which for simplicity we formulate for $\alpha=1:$%
\[
\langle L^{1},L^{\infty}\rangle_{0,1;K}=\langle L^{1},L^{p}\rangle
_{0,1;K}=\langle L^{1},\langle L^{1},L^{\infty}\rangle_{1/p^{\prime}%
,1;K}\rangle_{0,1;K}.
\]
More generally, the formulae can be stated for ordered Banach pairs. We ask,
for conditions on $\kappa,\mu,$ quasi-concave functions for the validity of%
\[
\langle L^{1},L^{\infty}\rangle_{\mu,1;K}=\langle L^{1},\langle L^{1}%
,L^{\infty}\rangle_{\kappa,1;K}\rangle_{\mu,1;K}.
\]

\end{problem}

\section{Jawerth-Milman meet Calder\'{o}n: K and J functionals for scales of
spaces and their r\^{o}le in Extrapolation\label{sec:J&M}}

The results of the previous sections point to a connection between
Extrapolation and Calder\'{o}n's theory of weak type interpolation. This will
be the topic of our discussion in this section. We have reorganized the
results of \cite{jm} by means of introducing the concept of $K-$functional for
scales of interpolation spaces. This approach to the results of \cite{jm}
shows more clearly the connection with Calder\'{o}n's theory, leads to a
cleaner presentation and makes it easier to formulate Problems.

Let us recall that Calder\'{o}n \cite{cal} developed methods to characterize
weak type interpolation inequalities via rearrangement inequalities and he
used it with great success to characterize the corresponding interpolation
spaces (cf. \cite{bs}, \cite{BK}). It is somewhat less well known that
Calder\'{o}n also understood that one could formulate the results through the
use of $K-$functionals, as was explicitly displayed in the thesis of his Ph.D.
student E. Oklander (\cite{Oklander}, \cite{oklan}). In a nutshell, the idea,
expressed in modern terminology, is simply a consequence of
\begin{equation}
T:\vec{X}\rightarrow\vec{Y}\text{ with norm }M\Leftrightarrow\text{ for all
}t>0,K(t,Tf;\vec{Y})\leq MK(t,f;\vec{X}). \label{char0}%
\end{equation}
In particular, when dealing with weak type interpolation the abstract
formulation can be made very explicit. Indeed, if we let $\vec{X}%
=(L(p_{0},1),L(p_{1},1))$ and $\vec{Y}=(L(q_{0},\infty),L(q_{1},\infty)),$ the
corresponding $K-$functionals are known. In fact, this is result was already
contained in Oklander \cite[Theorem 3, page 51; and Theorem 4, page
52]{Oklander}, \cite{oklan}, where these $K-$functionals are computed exactly,
and was also done by Sharpley \cite[Lemma 6.8, page 504]{shar}, in the
slightly more general context of general Lorentz spaces\footnote{For a concave
function $\phi$ let%
\[
\Lambda_{\phi}=\{f:\left\Vert f\right\Vert _{\Lambda_{\phi}}=\int_{0}^{\infty
}f^{\ast}(s)d\phi(s)<\infty\},
\]
then if $\phi_{1},\phi_{2}$ are concave functions we have
\[
K(t,f;\Lambda_{\phi_{1}},\Lambda_{\phi_{2}})=\int_{0}^{\infty}f^{\ast}%
(s)d\min(\phi_{1}(s),t\phi_{2}(s)).
\]
In particular,%
\[
\langle\Lambda_{\phi_{1}},\Lambda_{\phi_{2}}\rangle_{0,1;K}=\{f:\int_{0}%
^{1}\int_{0}^{\infty}f^{\ast}(s)(\frac{d}{ds}\min(\phi_{1}(s),t\phi
_{2}(s)))\frac{dt}{t}.
\]
} and Marcinkiewicz spaces. As a consequence, in all these cases one can
obtain an explicit characterization which, in turn, can be reformulated in
terms of what nowadays is called the Calder\'{o}n operator. We refer to
Sharpley \cite[Theorem 6.9, page 511]{shar}, Bennett-Sharpley \cite{bs} and
the references therein.

Returning to Yano's theorem, as we have seen in the previous Section, the
natural formulation of the result is in terms of weak type operators. More
generally, we are led to consider the following problem in the setting of real
interpolation scales. Suppose that $\vec{X}$ and $\vec{Y}$ are Banach pairs
and suppose that$\ T$ satisfies
\begin{equation}
T:\vec{X}_{\theta,1;J}^{\blacktriangleleft}\rightarrow\vec{Y}_{\theta
,\infty;K}^{\blacktriangleleft},\text{ with }\left\Vert T\right\Vert _{\vec
{X}_{\theta,1;J}^{\blacktriangleleft}\rightarrow\vec{Y}_{\theta,\infty
;K}^{\blacktriangleleft}}\leq M(\theta),\text{ for all }\theta\in(0,1).
\label{char1}%
\end{equation}

\begin{problem}
Provide an intrinsic characterization of (\ref{char1}).
\end{problem}

Jawerth-Milman \cite{jm} took up this problem and formulated the solution as
an extension of Calder\'{o}n's characterization of weak type interpolation
(\ref{char0}). We now reformulate their solution introducing the notion of
$K-$functional for a scale of real interpolation spaces\footnote{$K-$%
functionals for many spaces or for families of spaces have been defined and
studied before but there were hardly ever explicitly computed (cf.
\cite{cwja}). The special setting of extrapolation allow us to do explicit
computations}. In fact, formally we can consider the $K-$functional for any
scale of interpolation spaces. Maybe, it is worth to stress here that the more
important issue here is the dependence of the $K-$functional on the weight
$M(\theta)$. Indeed, if a scale of interpolation functors is complete, then
when $M(\theta)=1$ we recover the usual $K-$functional (cf. Remark
\ref{rem:complete} below).

\begin{definition}
\label{def:scaleKfunctional}Let $\{\rho_{\theta}\}_{\theta\in(0,1)}$ be a
family of (quasi)-concave functions, and let $\{F_{\rho_{\theta}}\}_{\theta
\in(0,1)}$ be a family of interpolation functors such that the characteristic
function (cf. Definition \ref{def:cara}) of each $F_{\rho_{\theta}}$ is
$\rho_{\theta}$\footnote{For example the functors $\vec{X}\rightarrow\vec
{X}_{\rho_{\theta},q;J}^{\blacktriangleleft}$ or $\vec{X}\rightarrow\vec
{X}_{\rho_{\theta},q;K}^{\blacktriangleleft}$.}, $\theta\in(0,1)$. Let $M$ be
a weight, that is $M:(0,1)\rightarrow(0,\infty).$ Then, for any Banach pair
$\vec{X}$ we let%
\[
K(t,f;\{M(\theta)F_{\rho_{\theta}}(\vec{X})\}):=\left\Vert f\right\Vert
_{\sum\limits_{\theta}\rho_{\theta}(t)M(\theta)F_{\rho_{\theta}}(\vec{X})}.
\]

\end{definition}

\begin{remark}
In particular, when dealing with a family of interpolation methods
$\{F_{t^{\theta}}\}_{\theta\in(0,1)}$ of exact type $\theta,$ then we shall
usually write $F_{\theta}$ instead of $F_{t^{\theta}},$ and therefore in this
case we have%
\[
K(t,f;\{M(\theta)F_{_{\theta}}(\vec{X})\})=\left\Vert f\right\Vert
_{\sum\limits_{\theta}t^{\theta}M(\theta)F_{_{\theta}}(\vec{X})}.
\]

\end{remark}

\begin{remark}
\label{rem:complete}Note that if $M(\theta)\equiv1,$ then $K(t,f;F_{_{\theta}%
}(\vec{X}))$ is essentially $K(t,f;\vec{X})$ (cf. \cite[page 25, formula
(3.9)]{jm}).
\end{remark}

Likewise, we can introduce the concept of $J-$functional for a scale of spaces.

\begin{definition}
\label{def:scaleJfunctional}Let $\{\rho_{\theta}\}_{\theta\in(0,1)}$ be a
family of (quasi)-concave functions, and let $\{F_{\rho_{\theta}}\}_{\theta
\in(0,1)}$ be a family of interpolation functors such that the characteristic
function of each $F_{\rho_{\theta}}$ is $\rho_{\theta}$\footnote{For example
the functors $\vec{X}\rightarrow\vec{X}_{\rho_{\theta},q;J}%
^{\blacktriangleleft}$ or $\vec{X}\rightarrow\vec{X}_{\rho_{\theta}%
,q;K}^{\blacktriangleleft}$.}, $\theta\in(0,1)$. Then, for any Banach pair
$\vec{X}$ and a given weight $M$, we let%
\[
J(t,f;\{M(\theta)F_{\rho_{\theta}}(\vec{X})\}):=\left\Vert f\right\Vert
_{{\large \Delta(}\rho_{\theta}(t)M(\theta)F_{\rho_{\theta}}(\vec{X}))}.
\]

\end{definition}

\begin{theorem}
\label{teo:ca2}(cf. \cite{jm}) Let $\vec{X}$ and $\vec{Y}$ be mutually closed
Banach pairs and let $M(\theta)$ be a tempered weight. Then (\ref{char1})
holds if and only if there exists a constant $c>0$ such that
\begin{equation}
K(t,Tf;\{\vec{Y}_{\theta,\infty;K}^{\blacktriangleleft}\})\leq
cK(t,f;\{M(\theta)\vec{X}_{\theta,1;J}^{\blacktriangleleft}\}),\;\;t>0.
\label{char2}%
\end{equation}

\end{theorem}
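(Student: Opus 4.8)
The plan is to exploit the abstract characterization \eqref{char0} of bounded operators between pairs, combined with the Fubini-type formulas of Theorems~\ref{sumformula} and \ref{intersectformula} and the reduction \eqref{validity}, to convert the family of inequalities \eqref{char1} into the single inequality \eqref{char2}, and conversely. First I would unwind Definition~\ref{def:scaleKfunctional} for the two scales appearing in \eqref{char2}. On the target side, since $M(\theta)\equiv 1$ there, we have
\[
K(t,Tf;\{\vec{Y}_{\theta,\infty;K}^{\blacktriangleleft}\})=\left\Vert Tf\right\Vert_{\sum_\theta t^\theta \vec{Y}_{\theta,\infty;K}^{\blacktriangleleft}},
\]
and by Remark~\ref{rem:complete} this is (equivalent to) the ordinary $K(t,Tf;\vec{Y})$ — this is where mutual closedness of $\vec{Y}$ enters, via the reiteration/density facts needed for the identification in \cite{jm}. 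On the source side,
\[
K(t,f;\{M(\theta)\vec{X}_{\theta,1;J}^{\blacktriangleleft}\})=\left\Vert f\right\Vert_{\sum_\theta t^\theta M(\theta)\vec{X}_{\theta,1;J}^{\blacktriangleleft}},
\]
and Theorem~\ref{sumformula} applied with $\rho_\theta(s)=s^\theta M(\theta)t^\theta$ (i.e. to the family of quasi-concave weights $s\mapsto (ts)^\theta M(\theta)$ indexed by $\theta$) identifies this $\sum$ with a single $\vec{X}_{\rho,1;J}$ space, where $\rho(s)=\sup_\theta (ts)^\theta M(\theta)$; temperedness of $M$ guarantees $\rho$ is finite and quasi-concave. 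Thus \eqref{char2} becomes a genuine $K$--functional estimate on the left against a concrete $J$--space on the right.

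For the direction \eqref{char1}$\Rightarrow$\eqref{char2}: fix $t>0$ and $f$. Using Theorem~\ref{sumformula} we pick, for $\varepsilon>0$, a $J$--representation $f=\int_0^\infty u(s)\frac{ds}{s}$ realizing $\left\Vert f\right\Vert_{\vec{X}_{\rho,1;J}}$ up to $(1+\varepsilon)$. Each $u(s)\in\Delta(\vec X)$, so by the standard estimate $\left\Vert u(s)\right\Vert_{\vec{X}_{\rho_\theta,1;J}}\le J(\cdot,u(s);\vec X)/\rho_\theta(\cdot)$ evaluated at the point $s$ (as in the proof of Theorem~\ref{sumformula}), $u(s)\in \vec{X}_{\theta,1;J}^{\blacktriangleleft}$ with a controlled norm; apply the hypothesis $\left\Vert T\right\Vert_{\vec{X}_{\theta,1;J}^{\blacktriangleleft}\to\vec{Y}_{\theta,\infty;K}^{\blacktriangleleft}}\le M(\theta)$ to each $Tu(s)$, and then sum/integrate. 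Concretely, $K(ts,Tu(s);\vec Y)\le (ts)^\theta\, M(\theta)\,\left\Vert u(s)\right\Vert_{\vec{X}_{\theta,1;J}^{\blacktriangleleft}}$ for every $\theta$; optimizing in $\theta$ produces $K(ts,Tu(s);\vec Y)\lesssim \rho(s)\cdot\frac{J(s,u(s);\vec X)}{\text{(something)}}$, and the $J$--summability of $u$ then bounds $\int_0^\infty \frac{K(ts,Tu(s);\vec Y)}{\rho(s)}\frac{ds}{s}$, hence — by the subadditivity of $K(t,\cdot;\vec Y)$ under the integral representation and the quasi-concavity scaling $K(t,\cdot)\le \max(1,t/s)K(s,\cdot)$ — bounds $K(t,Tf;\vec Y)$ by $c(1+\varepsilon)\left\Vert f\right\Vert_{\vec{X}_{\rho,1;J}}=c(1+\varepsilon)K(t,f;\{M(\theta)\vec{X}_{\theta,1;J}^{\blacktriangleleft}\})$. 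Let $\varepsilon\to 0$.

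For the converse \eqref{char2}$\Rightarrow$\eqref{char1}: fix $\theta_0\in(0,1)$ and $f\in\vec{X}_{\theta_0,1;J}^{\blacktriangleleft}$. Since the single functor $\vec{X}_{\theta_0,1;J}^{\blacktriangleleft}$ embeds into the $\sum$ over the whole scale with weight $M$ — precisely, $\left\Vert g\right\Vert_{\sum_\theta t^\theta M(\theta)\vec{X}_{\theta,1;J}^{\blacktriangleleft}}\le t^{\theta_0}M(\theta_0)\left\Vert g\right\Vert_{\vec{X}_{\theta_0,1;J}^{\blacktriangleleft}}$ — hypothesis \eqref{char2} gives $K(t,Tf;\vec Y)\lesssim t^{\theta_0}M(\theta_0)\left\Vert f\right\Vert_{\vec{X}_{\theta_0,1;J}^{\blacktriangleleft}}$ for all $t>0$. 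Dividing by $t^{\theta_0}$ and taking the supremum over $t$ recognizes the left side as $\left\Vert Tf\right\Vert_{\vec{Y}_{\theta_0,\infty;K}^{\blacktriangleleft}}$ (up to the normalizing constant $\Phi_{\theta_0,\infty}$), yielding $\left\Vert Tf\right\Vert_{\vec{Y}_{\theta_0,\infty;K}^{\blacktriangleleft}}\le cM(\theta_0)\left\Vert f\right\Vert_{\vec{X}_{\theta_0,1;J}^{\blacktriangleleft}}$, which is \eqref{char1}.

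The main obstacle I anticipate is not the two implications per se — those are ``Fubini'' bookkeeping once the $K$-- and $J$--functionals for the scales are written out — but rather making the identifications in Definition~\ref{def:scaleKfunctional} rigorous with \emph{uniform} constants: one must verify that the $\sum$-- and $\Delta$--reductions \eqref{validity} apply to the $t$--parametrized weight family $\{(ts)^\theta M(\theta)\}_\theta$ with equivalence constants independent of $t$, and that the mutual-closedness hypothesis on $\vec X$ and $\vec Y$ is exactly what is needed to pass between the $1;J$ and $\infty;K$ descriptions (and to legitimize the density argument in the integral representation of $f$). I would isolate that uniformity as a lemma, citing the relevant reiteration results from \cite{jm} and \cite{ALM}, and then the theorem follows by the scheme above.
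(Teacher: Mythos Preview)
Your plan is essentially the paper's own approach: identify the scale $K$--functional on the target side with the ordinary $K(t,Tf;\vec{Y})$ (Remark~\ref{rem:complete}), use Theorem~\ref{sumformula} to compute the source-side scale $K$--functional as $\Vert f\Vert_{\vec{X}_{\rho_t,1;J}}$, derive the pointwise $K/J$ inequality from \eqref{char1} and integrate over a $J$--decomposition for the forward direction, and use the trivial embedding of a single $\theta_0$--space into the $\sum$ for the converse. This is exactly what the paper does in the discussion following Theorem~\ref{teo:ca2} and in the proof of Theorem~\ref{teo:ca1}.

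Two bookkeeping corrections you should make before writing this up cleanly: (i) the characteristic function of $t^\theta M(\theta)\vec{X}_{\theta,1;J}^{\blacktriangleleft}$ is $C_\theta(s)=s^\theta/(t^\theta M(\theta))$, not $(ts)^\theta M(\theta)$, so the envelope is $\rho_t(s)=\sup_\theta s^\theta/(t^\theta M(\theta))=1/\tau(t/s)$ with $\tau(u)=\inf_\theta u^\theta M(\theta)$ (cf.\ \eqref{reveeer1}--\eqref{tau}); (ii) in your forward step the relevant estimate is $K(t,Tu(s);\vec{Y})\le \tau(t/s)J(s,u(s);\vec{X})$ (not $K(ts,\dots)$), which upon integration against $ds/s$ gives $\int_0^\infty J(s,u(s);\vec{X})/\rho_t(s)\,\frac{ds}{s}$ --- this is exactly the $\vec{X}_{\rho_t,1;J}$--norm, so the placeholder ``(something)'' disappears once you have the right $\rho_t$.
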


In fact, we can also write down a \textquotedblleft Calder\'{o}n operator type
formulation\textquotedblright\ of this result by making explicit the
$K-$functionals for scales that are involved. The extrapolation version of
Calder\'{o}n's result then reads: $(\ref{char2})$ holds if and only
if\footnote{For a Banach pair $\vec{Y}$ we let $Y_{i}^{\circ}$ to be the
closure of $Y_{0}\cap Y_{1}$ in $Y_{i}.$}%
\begin{equation}
K(t,Tf,\vec{Y}^{\circ})\leq c\int_{0}^{\infty}K(\frac{t}{r},f;\vec{X})d\mu(r),
\label{char2'}%
\end{equation}
where
\begin{equation}
\tau(r)=\inf_{\theta}\{r^{\theta}M(\theta)\},\text{ \ \ }r>0, \label{tau}%
\end{equation}
and $\mu$ is the representing measure of $\tau:$%
\begin{equation}
\tau(x)=\int_{0}^{\infty}\min\{1,\frac{x}{r}\}d\mu(r),\text{ \ \ }x>0.
\label{taurepresented}%
\end{equation}
Let us recall the details. Suppose that (\ref{char2}) holds. Recall that with
absolute constants we can write (cf. \cite{jm})%
\begin{align*}
K(t,Tf;\{\vec{Y}_{\theta,\infty;K}^{\blacktriangleleft}\})  &  \approx
K(t,Tf;\{\vec{Y}_{\theta,1;J}^{\blacktriangleleft}\})\\
&  =\left\Vert Tf\right\Vert _{\sum\limits_{\theta}t^{\theta}\vec{Y}%
_{\theta,1;J}^{\blacktriangleleft}}\\
&  \approx K(t,Tf,\vec{Y}^{\circ}).
\end{align*}
It remains to establish the equivalence between the right hand side of
(\ref{char2'}) and%
\[
K(t,f;\{M(\theta)\vec{X}_{\theta,1;J}^{\blacktriangleleft}\}):=\left\Vert
f\right\Vert _{\sum\limits_{\theta}t^{\theta}M(\theta)\vec{X}_{\theta
,1;J}^{\blacktriangleleft}}.
\]
To compute the indicated norm on the right hand side observe that the
characteristic function, $C_{\theta}(s)$ say, of $t^{\theta}M(\theta)\vec
{X}_{\theta,1;J}^{\blacktriangleleft}$, is given by $C_{\theta}(s)=\frac
{s^{\theta}}{t^{\theta}M(\theta)},$ and therefore by Theorem \ref{sumformula},
for each $t>0,$%
\begin{equation}
\sum\limits_{\theta}t^{\theta}M(\theta)\vec{X}_{\theta,1;J}%
^{\blacktriangleleft}=\vec{X}_{\rho_{t},1;J} \label{reveeer1}%
\end{equation}
where $\rho_{t}(s)=\sup_{0<\theta<1}\{\frac{s^{\theta}}{t^{\theta}M(\theta
)}\}.$ Consequently, rewriting $\rho_{t}(s)$ in terms of $\tau$ (cf.
(\ref{tau})), we have%
\begin{align}
\left\Vert f\right\Vert _{\sum\limits_{\theta}t^{\theta}M(\theta)\vec
{X}_{\theta,1;J}^{\blacktriangleleft}}  &  =\left\Vert f\right\Vert _{\vec
{X}_{\rho_{t},1;J}}=\inf_{f=\int_{0}^{\infty}u(s)\frac{ds}{s}}\{\int%
_{0}^{\infty}\frac{J(s,u(s);\vec{X})}{\rho_{t}(s)}\frac{ds}{s}\}\nonumber\\
&  =\inf_{f=\int_{0}^{\infty}u(s)\frac{ds}{s}}\{\int_{0}^{\infty}%
J(s,u(s);\vec{X})\tau(\frac{t}{s})\frac{ds}{s}\}\nonumber\\
&  =\inf_{f=\int_{0}^{\infty}u(s)\frac{ds}{s}}\{\int_{0}^{\infty}\int%
_{0}^{\infty}J(s,u(s);\vec{X})\min\{1,\frac{t}{rs}\}d\mu(r)\frac{ds}{s}\}.
\label{need}%
\end{align}
Using the strong form of the fundamental Lemma, we can find a special
decomposition $f=\int_{0}^{\infty}u_{f}(s)\frac{ds}{s},$ such that, with
constants independent of $f,$ we have
\[
\text{ }K(t,f;\vec{X})\approx\int_{0}^{\infty}J(s,u_{f}(s);\vec{X}%
)\min\{1,\frac{t}{s}\}\frac{ds}{s}.
\]
Therefore, combining with (\ref{need}) we obtain
\begin{align*}
\left\Vert f\right\Vert _{\sum\limits_{\theta}t^{\theta}M(\theta)\vec
{X}_{\theta,1;J}^{\blacktriangleleft}}  &  \leq\int_{0}^{\infty}\int%
_{0}^{\infty}J(s,u_{f}(s);\vec{X})\min\{1,\frac{t}{rs}\}d\mu(r)\frac{ds}{s}\\
&  =\int_{0}^{\infty}\int_{0}^{\infty}J(s,u_{f}(s);\vec{X})\min\{1,\frac
{t}{rs}\}\frac{ds}{s}d\mu(r)\\
&  \preceq\int_{0}^{\infty}K(\frac{t}{r},f;\vec{X})d\mu(r).
\end{align*}
The last inequality can be reversed. In preparation to prove this claim we
let
\[
N(\theta):=\int_{0}^{\infty}r^{-\theta}d\mu(r),\theta\in(0,1)\text{, where
}\mu\text{ is the representing measure of }\tau\text{ (cf.
(\ref{taurepresented})).}%
\]
It is shown in \cite[Lemma 3.9]{jm} that if $M(\theta)$ is tempered then with
constants independent of $\theta,$%
\[
N(\theta)\preceq M(\theta).
\]
We will also use the fact that $\sum\limits_{\theta}t^{\theta}M(\theta)\vec
{X}_{\theta,1;J}^{\blacktriangleleft}=\sum\limits_{\theta}t^{\theta}%
M(\theta)\vec{X}_{\theta,\infty;K}^{\blacktriangleleft}.$ Now, for each
$\theta\in(0,1),$ and for each $f\in\vec{X}_{\theta,\infty;K}%
^{\blacktriangleleft},$ we have
\begin{align*}
\int_{0}^{\infty}K(\frac{t}{r},f;\vec{X})d\mu(r)  &  =\int_{0}^{\infty
}(K(\frac{t}{r},f;\vec{X})(\frac{t}{r})^{-\theta})(\frac{t}{r})^{\theta}%
d\mu(r)\\
&  \leq\int_{0}^{\infty}\left(  \sup_{x}\{K(x,f;\vec{X})x^{-\theta}\}\right)
(\frac{t}{r})^{\theta}d\mu(r)\\
&  =t^{\theta}\left\Vert f\right\Vert _{\vec{X}_{\theta,\infty;K}%
^{\blacktriangleleft}}\int_{0}^{\infty}r^{-\theta}d\mu(r)\\
&  =N(\theta)t^{\theta}\left\Vert f\right\Vert _{\vec{X}_{\theta,\infty
;K}^{\blacktriangleleft}}\\
&  \preceq M(\theta)t^{\theta}\left\Vert f\right\Vert _{\vec{X}_{\theta
,\infty;K}^{\blacktriangleleft}}.
\end{align*}
Let $f=\sum f_{\theta}$ be a decomposition such that
\[
\sum\left\Vert f_{\theta}\right\Vert _{M(\theta)t^{\theta}\vec{X}%
_{\theta,\infty;K}^{\blacktriangleleft}}\approx\left\Vert f\right\Vert _{\sum
M(\theta)t^{\theta}\vec{X}_{\theta,\infty;K}^{\blacktriangleleft}}%
\approx\left\Vert f\right\Vert _{\sum\limits_{\theta}t^{\theta}M(\theta
)\vec{X}_{\theta,1;J}^{\blacktriangleleft}}.
\]
Then formally (by Fatou's Lemma),%
\begin{align*}
\int_{0}^{\infty}K(\frac{t}{r},f;\vec{X})d\mu(r)  &  =\int_{0}^{\infty}%
K(\frac{t}{r},\sum f_{\theta};\vec{X})d\mu(r)\\
&  \leq\sum\int_{0}^{\infty}K(\frac{t}{r},f_{\theta};\vec{X})d\mu(r)\\
&  \preceq\sum\limits_{\theta}M(\theta)t^{\theta}\left\Vert f_{\theta
}\right\Vert _{\vec{X}_{\theta,\infty;K}^{\blacktriangleleft}}\\
&  \approx\left\Vert f\right\Vert _{\sum\limits_{\theta}t^{\theta}%
M(\theta)\vec{X}_{\theta,\infty;K}^{\blacktriangleleft}}\\
&  \approx\left\Vert f\right\Vert _{\sum\limits_{\theta}t^{\theta}%
M(\theta)\vec{X}_{\theta,1;J}^{\blacktriangleleft}},
\end{align*}
concluding the proof of the equivalence between (\ref{char2}) and
(\ref{char2'}).

\begin{problem}
More generally carry out explicit calculations of the $K-$functionals for
interpolation scales of interpolation functors $\{F_{\rho_{\theta}}%
\}_{\theta\in I}.$
\end{problem}

\begin{remark}
In connection with the previous Problem it is important to recall the concept
of \textquotedblleft complete families of interpolation
functors\textquotedblright\ introduced in \cite{jm}. We say that a family
$\{F_{\rho_{\theta}}\}_{\theta\in I}$ is \textit{complete} if there exists a
constant $C>0$ such that%
\[
\inf_{\theta\in I}\{\frac{\rho_{\theta}(t)}{\rho_{\theta}(s)}\}\leq
C\min\{1,\frac{t}{s}\},s,t>0.
\]
In this context the import of this notion is explained in the next Example.
\end{remark}

\begin{example}
Suppose that $\{F_{\rho_{\theta}}\}_{\theta\in I}$ is complete. Then, if
$\vec{X}$ is mutually closed,
\[
K(t,f;\{F_{\rho_{\theta}}(\vec{X})\}):=\left\Vert f\right\Vert _{\sum
\limits_{\theta}\rho_{\theta}(t)F_{\rho_{\theta}}(\vec{X})}\approx
K(t,f;\vec{X}^{\circ})
\]
and%
\[
J(t,f;\{F_{\rho_{\theta}}(\vec{X})\}):=\left\Vert f\right\Vert
_{{\large \Delta}_{\theta}(\rho_{\theta}(t)F_{\rho_{\theta}}(\vec{X})})\approx
J(t,f;\vec{X}^{\circ})
\]
(cf. \cite[page 15: formulae in line 9 and (2.15)]{jm}, \cite[Theorem
2.2]{AL2017}.)
\end{example}

\begin{problem}
Let $\{\rho_{\theta}\}_{\theta\in I}$ be a family of quasi-concave functions,
let $\{F_{\rho_{\theta}}\}_{\theta\in I}$ be a complete family of
interpolation functors and let $M(\theta)$ be a tempered weight. Compute
$K(t,f;\{M(\theta)F_{\rho_{\theta}}(\vec{X})\})$ and $J(t,f;\{M(\theta
)F_{\rho_{\theta}}(\vec{X})\}).$
\end{problem}

\begin{remark}
When dealing with ordered pairs we can replace the usual real interpolation
spaces by the modified ones $\langle\vec{X}\rangle_{\theta,1;J}%
^{\blacktriangleleft};\langle\vec{X}\rangle_{\theta,q(\theta);K}%
^{\blacktriangleleft},$ etc. (cf. \cite{jm}, \cite{ALM})
\end{remark}

\begin{example}
\label{ex:yano} Let us consider the weak type Yano condition\footnote{The same
argument works, with appropriate adjustment, to deal with general $\alpha>0.$%
}: $T:L(p,1)\subset L^{p}\rightarrow L^{p}\subset L(p,\infty)$ with
$\left\Vert T\right\Vert _{L(p,1)\rightarrow L(p,\infty)}\leq cp,$
$p>p_{0}>1.$ Here all spaces are based on $[0,1].$ Then,
\[
\langle L^{1},L^{\infty}\rangle_{1/p^{\prime},\infty;K}^{\blacktriangleleft
}=L(p,\infty):=\{f:\left\Vert f\right\Vert _{L(p,\infty)}:=\sup_{0<t<1}%
\{f^{\ast\ast}(t)t^{1/p}\}<\infty\},
\]
$L^{p}=\langle L^{1},L^{\infty}\rangle_{1/p^{\prime},p;K}^{\blacktriangleleft
},$ and since $\langle L^{1},L^{\infty}\rangle_{1/p^{\prime},1;J}%
^{\blacktriangleleft}\overset{1}{\subset}\langle L^{1},L^{\infty}%
\rangle_{1/p^{\prime},p;K}^{\blacktriangleleft},$ we have (cf. (\ref{char2})
and (\ref{char2'}))%
\begin{align*}
K(t,Tf;\{\langle L^{1},L^{\infty}\rangle_{1/p^{\prime},\infty;K}%
\}_{1<p<\infty})  &  \preceq K(t,f;\{p\langle L^{1},L^{\infty}\rangle
_{1/p^{\prime},1;J}\}_{1<p<\infty})\\
&  \approx\int_{0}^{\infty}K(\frac{t}{r},f;L^{1},L^{\infty})w(r)dr,
\end{align*}
where $w(r)dr$ is the measure representing the function
\[
\tau(t)=\inf_{p>1}\{pt^{1-1/p}\}=t\inf_{p>1}\{pt^{-1/p}\}=t\inf_{0<u<1}%
\{\frac{e^{u\log\frac{1}{t}}}{u}\}:=t\inf_{0<u<1}g(u).
\]
We can compute directly $\frac{dg(u)}{du} =\frac{g(u)}{u}(u\log\frac{1}%
{t}-1),$ and
\[
\frac{d^{2} g(u)}{du^{2}}=g^{\prime}(u)\frac{(u\log\frac{1}{t}-1)}{u}%
+\frac{g(u)}{u^{2}}=\frac{g(u)}{u^{2}}\left(  (u\log\frac{1}{t}-1)^{2}%
+1\right) .
\]
Then, for $t<1/e$, $u_{\ast}=\frac{1}{\log\frac{1}{t}}\in(0,1),g^{\prime
}(u_{\ast})=0,g^{^{\prime\prime}}(u_{\ast})>0$ and we see that for $t<1/e$,
$\tau(t)=et\log\frac{1}{t}.$ If $t\le1/e,$ then $g^{\prime}(u)\le0,$ and
$g(u)$ decreases. Hence, $\tau(t)=1$ in this range. So all in all,
\[
\tau(t)=et\log\frac{1}{t}\chi_{(0,1/e)}(t)+\chi_{(1/e,\infty)}(t),
\]%
\[
w(t)\approx-t\tau^{\prime\prime}(t)=\chi_{(0,1/e)}(t).
\]
Consequently,%
\[
\int_{0}^{\infty}K(\frac{t}{r},f;L^{1},L^{\infty})w(r)dr=\int_{0}^{1/e}%
K(\frac{t}{r},f;L^{1},L^{\infty})dr=t\int_{et}^{\infty}K(u,f;L^{1},L^{\infty
})\frac{du}{u^{2}}.
\]
Thus, for $0<t<1,$ we can write
\begin{align*}
(Tf)^{\ast\ast}(t)  &  \preceq\int_{et}^{e}\frac{K(u,f;L^{1},L^{\infty})}%
{u}\frac{du}{u}+\int_{e}^{\infty}K(u,f;L^{1},L^{\infty})\frac{du}{u^{2}}\\
&  \preceq f^{\ast\ast}(et)\log\frac{1}{t}+\left\Vert f\right\Vert _{L^{1}%
}e^{-1}.
\end{align*}
Therefore,%
\begin{equation}
\frac{(Tf)^{\ast\ast}(t)}{\log\frac{2}{t}}\preceq f^{\ast\ast}(et)+\frac
{\left\Vert f\right\Vert _{L^{1}}e^{-1}}{\log\frac{2}{t}} \label{anunzia}%
\end{equation}
and we obtain%
\begin{align*}
\sup_{t\in(0,1)}\frac{(Tf)^{\ast\ast}(t)}{\log\frac{2}{t}}  &  \preceq
\left\Vert f\right\Vert _{L^{\infty}}+\frac{\left\Vert f\right\Vert _{L^{1}}%
}{\log2}\\
&  \preceq\left\Vert f\right\Vert _{L^{\infty}}.
\end{align*}
But (\ref{anunzia}) implies more general results. Indeed, let $X$ be an
rearrangement invariant (briefly, r.i.) space\footnote{For the definition see
e.g. \cite{bs}.} such that the Hardy operator $Pf(t)=\frac{1}{t}\int_{0}%
^{t}f(s)ds,$ is bounded on $X,$ and let $X(\log^{-1})$ be defined by the norm
(cf. \cite{AL2009})%
\[
\left\Vert f\right\Vert _{X(\log^{-1})}=\left\Vert \frac{f^{\ast\ast}(t)}%
{\log\frac{2}{t}}\right\Vert _{X}.
\]
Then applying the $X$-norm to (\ref{anunzia}) we obtain the extrapolation
theorem (cf. \cite[Theorem~4.3]{AL2009}):%
\[
T:X\rightarrow X(\log^{-1}).
\]

\end{example}

\begin{example}
A similar result can be obtained if we consider operators such that
\[
\left\Vert T\right\Vert _{L(p,1)\rightarrow L(p,\infty)}\leq cp^{\alpha
}\;\;\mbox{as}\;p\rightarrow\infty,
\]
where $\alpha>0.$ Indeed, in this case we have $\tau(t)=t\inf_{p>1}%
\{p^{\alpha}t^{-1/p}\},0<t<1.$ A computation shows that $\tau(t)\approx
t(\log\frac{1}{t})^{\alpha}$ near zero, and
\[
w(t)\approx-t\tau^{\prime\prime}(t)\approx(\log\frac{1}{t})^{\alpha-1}\text{
as }t\longrightarrow0.
\]
which leads to estimates of the form (cf. \cite{jm})
\[
(Tf)^{\ast\ast}(t)\preceq\int_{et}^{e}\frac{K(u,f;L^{1},L^{\infty})}{u}%
(\log\frac{u}{t})^{\alpha-1}\frac{du}{u},\;\;0<t<1.
\]

\end{example}

\begin{example}
\label{ex:enuno}(Yano type extrapolation theorem) The result of Example
\ref{ex:yano} can be extended to scales. We illustrate this considering
deteriorating norms when $p\rightarrow1.$ Let $\vec{A},\vec{B}$ be mutually
closed ordered Banach pairs. Let $X$ be a r.i. space on $(0,1)$. Suppose that
$T$ is a bounded operator, $T:\vec{A}_{\theta,1;J}^{\blacktriangleleft
}\rightarrow\vec{B}_{\theta,\infty;K}^{\blacktriangleleft},$ with $\left\Vert
T\right\Vert _{\vec{A}_{\theta,1;J}^{\blacktriangleleft}\rightarrow\vec
{B}_{\theta,\infty;K}^{\blacktriangleleft}}\preceq\theta^{-1},\theta\in(0,1).$
Then,%
\[
\left\Vert \frac{d}{dt}(K(t,Tf;\vec{B}))\right\Vert _{X}\leq c\left\Vert
\frac{K(t,f;\vec{A})}{t}\right\Vert _{X}.
\]
In particular, if $\vec{A}=\vec{B}=(L^{1}(0,1),L^{\infty}(0,1))$ and
$T:L(p,1)\rightarrow L(p,\infty),$ with norm less or equal than $cp^{\prime
}=\frac{cp}{p-1},1<p<\infty,$ then%
\[
\left\Vert (Tf)^{\ast}\right\Vert _{X}\leq c\left\Vert f^{\ast\ast}\right\Vert
_{X}.
\]
Yano's theorem (i) corresponds to the case $X=L^{1}.$
\end{example}

\begin{proof}
By extrapolation, via the $K-$functional for scales, we have%
\[
K(t,Tf;\{\vec{B}_{\theta,\infty;K}^{\blacktriangleleft}\})\leq cK(t,f;\{\theta
^{-1}\vec{A}_{\theta,1;J}^{\blacktriangleleft}\}).
\]
Therefore, by calculation similar to that of Example \ref{ex:yano} (cf.
\cite{jm}), there exists an absolute constant $c>0$, such that
\[
K(t,Tf;\vec{B})\leq c\int_{0}^{t}K(s,f;\vec{A})\frac{ds}{s}.
\]
Since $\lim_{t\rightarrow0}K(t,Tf;\vec{B})=0,$ we can rewrite the last
inequality as%
\[
\int_{0}^{t}\frac{d}{ds}(K(s,Tf;\vec{B}))ds\leq c\int_{0}^{t}K(s,f;\vec
{A})\frac{ds}{s}.
\]

Therefore, since $\frac{d}{ds}(K(s,Tf;\vec{B}))$ and $\frac{K(s,f\vec{A})}{s}$
are decreasing then, by the Calder\'{o}n-Mityagin principle for any r.i. space
we have%

\[
\left\Vert \frac{d}{dt}(K(t,Tf;\vec{B})\right\Vert _{X}\leq c\left\Vert
\frac{K(t,f;\vec{A})}{t}\right\Vert _{X},
\]
as desired.
\end{proof}

\begin{example}
\label{ex:lag}(Rearrangement inequalities associated to Yano type
extrapolation) Let $\vec{A},\vec{B}$ be mutually closed Banach pairs. Suppose
that $T$ is a bounded operator, $T:\vec{A}_{\theta,1;J}^{\blacktriangleleft
}\rightarrow\vec{B}_{\theta,\infty;K}^{\blacktriangleleft},$ with $\left\Vert
T\right\Vert _{\vec{A}_{\theta,1;J}^{\blacktriangleleft}\rightarrow\vec
{B}_{\theta,\infty;K}^{\blacktriangleleft}}\preceq\theta^{-1}(1-\theta
)^{-1},\theta\in(0,1).$ Then, there exists a constant $c>0$ such that%
\begin{equation}
K(t,Tf;\vec{B})\leq c\left(  \int_{0}^{t}K(s,f;\vec{A})\frac{ds}{s}+t\int%
_{t}^{\infty}K(s,f;\vec{A})\frac{ds}{s^{2}}\right)  . \label{becomes}%
\end{equation}
In particular, if $\vec{A}=\vec{B}=(L^{1},L^{\infty})$, we have $\vec
{A}_{1/p^{\prime},1;J}^{\blacktriangleleft}=L(p,1),\vec{B}_{1/p^{\prime
},\infty;K}^{\blacktriangleleft}=L(p,\infty),$ $1<p<\infty$. Then
(\ref{becomes}) can be written as%
\begin{equation}
t(Tf)^{\ast\ast}(t)\leq c\left(  \int_{0}^{t}f^{\ast\ast}(s)ds+t\int%
_{t}^{\infty}f^{\ast\ast}(s)\frac{ds}{s}\right)  . \label{becomes1}%
\end{equation}
For the Hilbert transform (\ref{becomes1}) is a classical inequality due to
O'Neil-Weiss \cite{ON}. In the same paper O'Neil-Weiss also prove
(\ref{becomes1}) for Calder\'{o}n-Zygmund operators, a result which they
credit to A. Calder\'{o}n and E. Stein. In \cite[Appendix]{cal}, Calder\'{o}n
extended these results to operators $T$ that, together with their adjoints,
are of weak type $(1,1)$ and strong type $(2,2).$ It has been pointed out by
several authors, for example, Bennett-Rudnick (cf. \cite[Theorem 4.7, page
134]{bs}), Jawerth-Milman \cite[Proposition 5.2.2, page 50]{jm}, Semenov
\cite{semen}), that for the Hilbert transform or C-Z operators, one can
improve (\ref{becomes1}) by means of replacing ** by * throughout, using the
fact that C-Z singular integral operators are of weak type (1,1). The weak
type $(1,1)$ assumption apparently cannot be dispensed with, and in general
does not follow from the assumptions $\left\Vert T\right\Vert
_{L(p,1)\rightarrow L(p,\infty)}\preceq\frac{p^{2}}{p-1},p\in(1,\infty)$ (cf.
\cite[First paragraph, page 2]{jm}, \cite[Remark~4.5]{AL2009}, \cite{hen}). In
\cite[Theorem 2.2, page 603]{bedesa}, the authors define operators of weak
type $(\infty,\infty)$ as those that map $L^{\infty}$ into $L(\infty,\infty),$
and using this definition show that operators of weak type $(1,1)$ and
$(\infty,\infty)$ satisfy (\ref{becomes1}). Earlier, in \cite{devr},
DeVore-Riemenschneider-Sharpley define an abstract notion of generalized weak
type $(1,1),(\infty,\infty)$ by asking that condition (\ref{becomes}) be
satisfied. Therefore, the notion of weak type $(1,1),(\infty,\infty)$ of
DeVore-Riemenschneider-Sharpley is equivalent to the assumption $T:\vec
{A}_{\theta,1;J}^{\blacktriangleleft}\rightarrow\vec{B}_{\theta,\infty
;K}^{\blacktriangleleft},$ with $\left\Vert T\right\Vert _{\vec{A}%
_{\theta,1;J}^{\blacktriangleleft}\rightarrow\vec{B}_{\theta,\infty
;K}^{\blacktriangleleft}}\preceq\theta^{-1}(1-\theta)^{-1},\theta\in(0,1).$

One can prove (\ref{becomes}) via the $K-$functional for scales. Equivalently,
we note (cf. Example \ref{exa:becomes3} below) that for%
\[
\tau(t)=\inf_{0<\theta<1}\{\theta^{-1}(1-\theta)^{-1}t^{\theta}\}
\]
we have%
\[
\tau(t)\approx\int_{0}^{\infty}\min(1,\frac{t}{n})\min(1,n)\frac{dn}{n}.
\]
Then, since under current assumptions $T$ satisfies the $K/J$ inequality (cf.
the next section)%
\[
K(t,Tf;\vec{B})\leq c\tau(t/s)J(s,f;\vec{A}),\;\;s,t>0,
\]
it follows that selecting a decomposition of $f=\int_{0}^{\infty}u(s)\frac
{ds}{s},$ such that
\[
\int_{0}^{\infty}\min(1,\frac{t}{s})J(s,u(s);\vec{A})\frac{ds}{s}\leq
cK(t,f;\vec{A})
\]
leads to%
\begin{align*}
K(t,Tf;\vec{B})  &  \leq c\int_{0}^{\infty}\min(1,n)\int_{0}^{\infty}%
\min(1,\frac{t}{sn})J(s,u(s);\vec{A})\frac{ds}{s}\frac{dn}{n}\\
&  =c\int_{0}^{1}K(\frac{t}{n},f;\vec{A})\min(1,n)\frac{dn}{n}\\
&  =c\left(  \int_{0}^{1}K(\frac{t}{n},f;\vec{A})dn+\int_{1}^{\infty}%
K(\frac{t}{n},f;\vec{A})\frac{dn}{n}\right) \\
&  =c\left(  t\int_{t}^{\infty}K(s,f;\vec{A})\frac{ds}{s^{2}}+\int_{0}%
^{t}K(s,f;\vec{A})\frac{ds}{s}\right)  ,
\end{align*}
as we wished to show.
\end{example}

\section{K/J Inequalities and extrapolation}

In extrapolation we start from a family of inequalities for a given operator,
but we do not have \textit{apriori} end point spaces where the estimates are
valid. It is then natural to collect estimates on elements that belong to the
intersection of all the domain spaces on which the operators act and use this
information to derive a basic inequality. This is the idea of the $K/J$
inequalities of \cite{jm}, a particular case of which we now review.

Indeed, it turns out to be useful for further developments (e.g. Bilinear
Extrapolation treated in Section \ref{bilinear}) to bring to the forefront
some of the key arguments underlying the proof of the equivalence between
(\ref{k/j1}) and (\ref{k/j4}). This leads in two steps to the $K/J$
inequalities: $(i)$ Associated to the extrapolation information there is a
natural concave function that allows one to establish an inequality between
the $K-$ and $J-$functionals; and $(ii)$ Extend this inequality to a
$K-$functional inequality using the strong form of the fundamental Lemma of
Interpolation. As it turns out this approach is essentially equivalent to the
use of the $K-$functional of scales discussed in the previous section, as we
shall now show.

It is instructive to see how $K/J$ inequalities arise in classical setting of
interpolation theory. Let $\vec{A}$, $\vec{B},$ be Banach pairs, and suppose
that $T:\vec{A}\rightarrow$ $\vec{B}$ is a bounded operator. Then,
\begin{equation}
K(t,Tf;\vec{B})\preceq K(t,f;\vec{A}),\;\;t>0, \label{k/j0}%
\end{equation}
which combined with the elementary inequality (cf. \cite[Lemma~3.2.1]{BL})%
\begin{equation}
K(t,f;\vec{A})\leq\min(1,\frac{t}{s})J(s,f;\vec{A}),\;\;t,s>0
\label{sustituir}%
\end{equation}
leads to the \textquotedblleft mother\textquotedblright\ of all $K/J$
inequalities%
\begin{equation}
K(t,Tf;\vec{B})\preceq\min(1,\frac{t}{s})J(s,f;\vec{A}),t,s>0. \label{k/j3}%
\end{equation}
Conversely, if (\ref{k/j3}) holds then we can return to (\ref{k/j0}) via the
strong form of the fundamental Lemma, as we now explain. Select a
decomposition $f=\int_{0}^{\infty}u_{f}(s)\frac{ds}{s},$ such that $\int%
_{0}^{\infty}\min(1,\frac{t}{s})J(s,u_{f}(s);\vec{A})\frac{ds}{s}\leq
cK(t,f;\vec{A}).$ Then,%
\begin{align}
K(t,Tf;\vec{B})  &  \leq c\int_{0}^{\infty}K(t,Tu_{f}(s);\vec{B})\frac{ds}%
{s}\nonumber\\
&  \leq c\int_{0}^{\infty}\min(1,\frac{t}{s})J(s,u_{f}(s);\vec{A})\frac{ds}%
{s}\label{usual}\\
&  \leq cK(t,f;\vec{A}).\nonumber
\end{align}

Now we turn to the context of extrapolation. In this context (\ref{k/j3}) is
not available but there is natural concave function that will replace
$\phi(u)=$ $\min\{1,u\}$ (cf. Example \ref{ex:marcao} below). We develop this
point in detail.

Let $\vec{A}$, $\vec{B},$ be mutually closed Banach pairs, and let $F_{\theta
},G_{\theta},$ be exact interpolation functors of type $\theta.$ Consider a
bounded operator $T:F_{\theta}(\vec{A})\rightarrow G_{\theta}(\vec{B})$, with
norm $M(\theta),\theta\in(0,1).$ Now (\ref{sustituir}) is not available to us
anymore and we search for a substitute. For this purpose we note that for all
$\theta\in(0,1),$ $\vec{A}_{\theta,1;J}^{\blacktriangleleft}%
\overset{1}{\subset}F_{\theta}(\vec{A}),$ and $G_{\theta}(\vec{B}%
)\overset{1}{\subset}\vec{B}_{\theta,\infty;K}^{\blacktriangleleft}$,
consequently, for all $s,t>0,$%
\begin{equation}
K(t,Tf;\vec{B})t^{-\theta}\leq\left\Vert Tf\right\Vert _{\vec{B}%
_{\theta,\infty;K}^{\blacktriangleleft}}\leq M(\theta)\left\Vert f\right\Vert
_{\vec{A}_{\theta,1;J}^{\blacktriangleleft}}\leq M(\theta)s^{-\theta
}J(s,f;\vec{A}), \label{first}%
\end{equation}
where the rightmost inequality is elementary (cf. \cite[Theorem 3.11.2 (4),
page 64]{BL}).

Hence, we arrive to%
\begin{align}
K(t,Tf;\vec{B})  &  \leq\inf_{\theta}\{M(\theta)\left(  \frac{t}{s}\right)
^{\theta}\}J(s,f;\vec{A})\nonumber\\
&  =\phi\left(  \frac{t}{s}\right)  J(s,f;\vec{A}),s,t>0, \label{second}%
\end{align}
where $\phi(u)=\inf_{\theta\in(0,1)}\{M(\theta)u^{\theta}\}.$ Inequalities
(\ref{second}) and (\ref{k/j3}) are examples of $K/J$ inequalities. Let $\Psi$
be a concave function, such that $\lim_{t\rightarrow0}\Psi(t)=\lim
_{t\rightarrow\infty}\frac{\Psi(t)}{t}=0,$ e.g. $\Psi(u)=\min(1,u),$ then we
say that $T$ satisfies a $\Psi-K/J$ inequality if there exists a constant
$c>0$ such that\footnote{These inequalities can be formulated without the use
of the $J-$functional. Indeed, note that since (\ref{second}) is valid for all
$s>0,$ then choosing $s=\frac{\left\Vert f\right\Vert _{A_{0}}}{\left\Vert
f\right\Vert _{A_{1}}},$ gives $J(s,f;\vec{A})=\left\Vert f\right\Vert
_{A_{1}}\max\{\frac{\left\Vert f\right\Vert _{A_{0}}}{\left\Vert f\right\Vert
_{A_{1}}},s\}=\left\Vert f\right\Vert _{A_{0}}$ and (\ref{second}) implies%
\begin{equation}
K(t,Tf;\vec{B})\leq\left\Vert f\right\Vert _{A_{0}}\phi\left(  t\frac
{\left\Vert f\right\Vert _{A_{1}}}{\left\Vert f\right\Vert _{A_{0}}}\right)  .
\label{second1}%
\end{equation}
Conversely, suppose that (\ref{second1}) holds. Then, since for any $s>0,$
$\left\Vert f\right\Vert _{A_{0}}\leq J(s,f;\vec{A}),$ and $\frac{\phi(u)}{u}$
decreases, we have%
\[
\frac{\phi\left(  t\frac{\left\Vert f\right\Vert _{A_{1}}}{\left\Vert
f\right\Vert _{A_{0}}}\right)  }{t\frac{\left\Vert f\right\Vert _{A_{1}}%
}{\left\Vert f\right\Vert _{A_{0}}}}\leq\frac{\phi\left(  t\frac{\left\Vert
f\right\Vert _{A_{1}}}{J(s,f;\vec{A})}\right)  }{t\frac{\left\Vert
f\right\Vert _{A_{1}}}{J(s,f;\vec{A})}}.
\]
Consequently,%
\[
\left\Vert f\right\Vert _{A_{0}}\phi\left(  t\frac{\left\Vert f\right\Vert
_{A_{1}}}{\left\Vert f\right\Vert _{A_{0}}}\right)  \leq J(s,f;\vec{A}%
)\phi\left(  t\frac{\left\Vert f\right\Vert _{A_{1}}}{J(s,f;\vec{A})}\right)
.
\]
But $\left\Vert f\right\Vert _{A_{1}}\leq\frac{J(s,f;\vec{A})}{s}$ and $\phi$
increases. Hence, the right hand side is smaller than%
\[
J(s,f;\vec{A})\phi\left(  \frac{t}{s}\right)
\]
as we wished to show.}%
\[
K(t,Tf;\vec{B})\leq c\Psi\left(  \frac{t}{s}\right)  J(s,f;\vec{A}),s,t>0.
\]
When $\Psi$ is understood we simply drop it and talk about $K/J$ inequalities.

\begin{example}
\label{ex:marcao}Note that if $M(\theta)\equiv1,$ then $\inf_{\theta\in
(0,1)}\{u^{\theta}\}=\inf_{\theta\in(0,1)}\{e^{\theta\log u}\}=\min\{1,u\},$
and we are \textquotedblleft back to interpolation\textquotedblright.
\end{example}

Then, when properly interpreted the equivalence between $K/J$ inequalities and
$K-$functional inequalities persists in the setting of extrapolation theory.

\begin{theorem}
\label{teo:ca1}Let $\vec{A}$, $\vec{B},$ be mutually closed Banach pairs, and
let $F_{\theta},G_{\theta},$ be exact interpolation functors of type $\theta.$
Consider a bounded operator $T:F_{\theta}(\vec{A})\rightarrow G_{\theta}%
(\vec{B})$, with norm $M(\theta),\theta\in(0,1).$ Then, the following are equivalent:

(i) There exists $c>0$ such that%
\begin{equation}
K(t,Tf;\{\vec{A}_{\theta,\infty;K}^{\blacktriangleleft}\}_{\theta\in
(0,1)})\leq cK(t,f;\{M(\theta)\vec{B}_{\theta,1;J}^{\blacktriangleleft
}\}_{\theta\in(0,1)}). \label{k/j1}%
\end{equation}
(ii) There exists a constant $c>0$ such that%
\begin{equation}
K(t,Tf;\vec{B}^{\circ})\leq c\int_{0}^{\infty}K(\frac{t}{r},f;\vec{A})d\mu(r),
\label{k/j4}%
\end{equation}
where $\mu$ is the representing measure of the concave function $\phi
(u)=\inf_{\theta}\{u^{\theta}M(\theta)\},$%
\[
\phi(t)=\int_{0}^{\infty}\min\{1,\frac{t}{r}\}d\mu(r).
\]

(iii) $T$ satisfies the $\phi-K/J$ inequality, that is there exists a constant
$c>0$ such that%
\begin{equation}
K(t,Tf;\vec{B})\leq c\phi(\frac{t}{s})J(s,f;\vec{A}). \label{k/j5}%
\end{equation}

\end{theorem}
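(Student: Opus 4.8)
The plan is to establish a cycle of implications $(iii)\Rightarrow(ii)\Rightarrow(i)\Rightarrow(iii)$, closely mirroring the classical argument recalled in the text (the equivalence of \eqref{k/j0}, \eqref{k/j3}, and \eqref{usual}) but with $\min(1,u)$ replaced by the concave function $\phi(u)=\inf_\theta\{u^\theta M(\theta)\}$. The starting point is already done in the excerpt: inequality \eqref{second}, derived from $\vec{A}^{\blacktriangleleft}_{\theta,1;J}\overset{1}{\subset}F_\theta(\vec A)$ and $G_\theta(\vec B)\overset{1}{\subset}\vec{B}^{\blacktriangleleft}_{\theta,\infty;K}$ together with the boundedness hypothesis, shows that the operator automatically satisfies the $\phi$-$K/J$ inequality \eqref{k/j5}. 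So in fact the hypotheses already give $(iii)$ for free, and the real content is the equivalence of $(iii)$ with $(i)$ and $(ii)$.

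\textbf{Step 1: $(iii)\Rightarrow(ii)$, upgrading a $K/J$ inequality to a $K$-functional inequality via the strong fundamental lemma.} Assuming \eqref{k/j5}, I would invoke the strong form of the fundamental lemma of interpolation (as used in the excerpt, e.g. around \eqref{usual} and \eqref{need}): choose a representation $f=\int_0^\infty u_f(s)\,\tfrac{ds}{s}$ with $\int_0^\infty \min(1,\tfrac{t}{s})J(s,u_f(s);\vec A)\,\tfrac{ds}{s}\le cK(t,f;\vec A)$. Since $\phi(u)=\int_0^\infty\min(1,\tfrac{u}{r})\,d\mu(r)$ by hypothesis, substituting $u=t/s$ gives $\phi(\tfrac{t}{s})=\int_0^\infty\min(1,\tfrac{t}{rs})\,d\mu(r)$. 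Then, using sublinearity of $K(t,\cdot;\vec B)$, \eqref{k/j5}, and Tonelli to swap the $s$ and $r$ integrals,
\[
K(t,Tf;\vec B)\le c\int_0^\infty K(t,Tu_f(s);\vec B)\tfrac{ds}{s}\le c\int_0^\infty\!\!\int_0^\infty\min(1,\tfrac{t}{rs})J(s,u_f(s);\vec A)\tfrac{ds}{s}\,d\mu(r),
\]
and the inner $s$-integral is bounded by $cK(\tfrac{t}{r},f;\vec A)$ by the defining property of $u_f$ (rescaled), yielding \eqref{k/j4} with $\vec B$ replaced by $\vec B^\circ$ — the passage to the Gagliardo closure being exactly what the strong fundamental lemma delivers.

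\textbf{Step 2: $(ii)\Rightarrow(i)$, rewriting both sides as $K$-functionals for scales.} This is essentially a transcription of the computation following Theorem~\ref{teo:ca2} in the excerpt. On the right, by Theorem~\ref{sumformula} the space $\sum_\theta t^\theta M(\theta)\vec{B}^{\blacktriangleleft}_{\theta,1;J}$ equals $\vec B_{\rho_t,1;J}$ with $\rho_t(s)=\sup_\theta\{s^\theta/(t^\theta M(\theta))\}=1/\phi(t/s)$, and the same chain of identities \eqref{need} expresses its norm as $\inf_{f=\int u}\int\!\!\int J(s,u(s);\vec A)\min(1,\tfrac{t}{rs})\,d\mu(r)\tfrac{ds}{s}$; combined with the strong fundamental lemma this is $\approx\int_0^\infty K(\tfrac{t}{r},f;\vec A)\,d\mu(r)$. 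On the left, $K(t,Tf;\{\vec B^{\blacktriangleleft}_{\theta,\infty;K}\})\approx K(t,Tf;\{\vec B^{\blacktriangleleft}_{\theta,1;J}\})\approx K(t,Tf;\vec B^\circ)$ by the complete-family formulas recalled in the Example after Remark~\ref{rem:complete} (applicable since the family $\{t^\theta\}$ is complete and $\vec B$ is mutually closed). Substituting these two identifications into \eqref{k/j4} gives \eqref{k/j1}. (I should note that the statement as displayed writes $\vec A_{\theta,\infty;K}$ on the left and $\vec B_{\theta,1;J}$ on the right of \eqref{k/j1}, which appears to be a typo for $\vec B$ and $\vec A$ respectively, matching the direction $T:F_\theta(\vec A)\to G_\theta(\vec B)$; I would prove the version consistent with \eqref{k/j4}.)

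\textbf{Step 3: $(i)\Rightarrow(iii)$, closing the loop.} Assuming \eqref{k/j1}, I would use the elementary pointwise bounds on the scale functionals. On one hand, for every $\theta$ and every $t$, $t^{-\theta}K(t,Tf;\vec B^\circ)\le \|Tf\|_{\vec B^{\blacktriangleleft}_{\theta,\infty;K}}\le C\,K(t,Tf;\{\vec B^{\blacktriangleleft}_{\theta,\infty;K}\})/t^\theta$ is not quite what is needed; rather, the point is that $K(t,Tf;\{\vec B^{\blacktriangleleft}_{\theta,\infty;K}\})\approx K(t,Tf;\vec B^\circ)$ and the right side of \eqref{k/j1} is $\approx\vec A_{\rho_t,1;J}$-norm of $f$. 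So \eqref{k/j1} says $K(t,Tf;\vec B^\circ)\le c\|f\|_{\vec A_{\rho_t,1;J}}$ for all $t$; then for $f\in\Delta(\vec A)$ the elementary estimate $\|f\|_{\vec A_{\rho_t,1;J}}\le J(s,f;\vec A)/\rho_t(s)=\phi(t/s)J(s,f;\vec A)$ (cf.~\cite[Theorem 3.11.2 (4)]{BL}, exactly as in \eqref{first}) gives \eqref{k/j5} on $\Delta(\vec A)$, which suffices since all three conditions only involve such $f$.

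\textbf{Expected main obstacle.} The routine parts are the Tonelli swap and the algebraic identity $\rho_t(s)=1/\phi(t/s)$. The delicate point is the \emph{mutual closedness / Gagliardo-closure bookkeeping}: the passage between $K(t,Tf;\vec B)$ and $K(t,Tf;\vec B^\circ)$, the justification that $\lim_{t\to0}K(t,Tf;\vec B)$-type vanishing conditions hold, and the role of $M(\theta)$ being tempered (needed, as in the excerpt's $N(\theta)\preceq M(\theta)$ lemma, only if one wants the reverse inequality in Step 2 — here it may be dispensable since $\phi$ itself carries all the information, but one must check that $\phi$ has a genuine representing measure $\mu$, i.e.\ that $\phi$ is concave with $\phi(0^+)=0$ and $\phi(t)/t\to0$, which follows from $M(\theta)$ being bounded below and the infimum over $\theta\in(0,1)$ staying away from the endpoints). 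I would spend most of the care on verifying that $\phi$ is a bona fide $\Psi$-function in the sense demanded before \eqref{second1}, so that \eqref{taurepresented}-style representation is legitimate; everything else then runs on the template already laid out in the excerpt.
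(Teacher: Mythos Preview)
Your proposal is correct and follows essentially the same approach as the paper: the paper proves $(i)\Leftrightarrow(ii)$ by quoting the computation after Theorem~\ref{teo:ca2} (your Step~2), proves $(iii)\Rightarrow(ii)$ via the strong fundamental lemma and Tonelli (identical to your Step~1), and closes the loop by invoking Theorem~\ref{teo:ca2} to pass from $(i)$ to the weak-type condition \eqref{k/j6} and then to $(iii)$ via \eqref{first}--\eqref{second}, which is your Step~3 with the intermediate step \eqref{k/j6} made explicit rather than compressed. Your observation about the $\vec A/\vec B$ swap in \eqref{k/j1} and your caution about temperedness of $M$ (needed only for the reverse inequality in Section~\ref{sec:J&M}, hence not strictly for the equivalences here) are both on point.
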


\begin{proof}
The equivalence between (\ref{k/j1}) and (\ref{k/j4}) was shown in Section
\ref{sec:J&M} (cf. \ref{char2'}). Furthermore, by \ Theorem \ref{teo:ca2},
(\ref{k/j1}) is equivalent to
\begin{equation}
T:\vec{A}_{\theta,1;J}^{\blacktriangleleft}\rightarrow\vec{B}_{\theta
,\infty;K}^{\blacktriangleleft},\text{ with norm }M(\theta),\theta\in(0,1).
\label{k/j6}%
\end{equation}
The argument provided before the statement of this theorem shows that
(\ref{k/j6}) implies (\ref{k/j5}). Finally, suppose that (\ref{k/j5}) holds.
Using the strong form of the fundamental Lemma, we can write%
\[
f=\int_{0}^{\infty}u_{f}(s)\frac{ds}{s}%
\]
such that%
\[
\int_{0}^{\infty}\min(1,\frac{t}{s})J(s,u_{f}(s);\vec{A})\frac{ds}{s}\preceq
K(t,f;\vec{A}).
\]
Therefore%
\begin{align*}
K(t,Tf;\vec{B})  &  \leq\int_{0}^{\infty}K(t,Tu_{f}(s);\vec{B})\frac{ds}{s}\\
&  \leq c\int_{0}^{\infty}\phi(\frac{t}{s})J(s,u_{f}(s);\vec{A})\frac{ds}{s}\\
&  =c\int_{0}^{\infty}\int_{0}^{\infty}\min(1,\frac{t}{sr})J(s,u_{f}%
(s);\vec{A})\frac{ds}{s}d\mu(r)\\
&  \leq c\int_{0}^{\infty}K(\frac{t}{r},f;\vec{A})d\mu(r),
\end{align*}
as we wished to show.
\end{proof}

Let us comment that one key point of the argument is the fact that the concave
functions $\min(1,\frac{\circ}{s})$ are *extremal* (in a suitable
\textquotedblleft Krein-Milman sense\textquotedblright) and that in fact we
can build \textquotedblleft all\textquotedblright\ concave functions from
them. Indeed, by the representation theorem (cf. \cite[Lemma 5.4.3, page
117]{BL}), to each concave function $\phi$ such that $\lim_{s\rightarrow0}%
\phi(s)=\lim_{s\rightarrow\infty}\phi(s)/s=0,$ there corresponds a measure
(representing measure) such that\footnote{In case $d\mu(r)=w(r)dr$, there is a
simple algorithm to find $w,$ namely $d\mu(t)=-td\phi^{\prime}(t).$}%
\begin{equation}
\phi(t)=\int_{0}^{\infty}\min(1,\frac{t}{r})d\mu(r). \label{analog}%
\end{equation}
More generally,%
\[
\phi(t)=\alpha+\beta t+\int_{0}^{\infty}\min(1,\frac{t}{r})d\mu(r)
\]
where $\alpha=\lim_{s\rightarrow0}\phi(s),$ and $\beta=\lim_{s\rightarrow
\infty}\phi(s)/s.$

One of the difficulties in the treatment of bilinear extrapolation is the lack
of such formulae for concave functions of two variables. This leads to

\begin{problem}
Find an analogue of (\ref{analog}) for concave functions of two variables (cf.
Section \ref{bilinear} below).
\end{problem}

\begin{example}
(cf. \cite{jm1}) We consider an elementary approach to the underlying $K/J$
inequalities associated with Yano's theorem\footnote{For simplicity we let
$\alpha=1.$}. Suppose that $T$ is a bounded operator $T:$ $L^{p}%
(0,1)\rightarrow L^{p}(0,1),$ with%
\begin{equation}
\left\Vert Tf\right\Vert _{L^{p}(0,1)}\leq c\frac{p}{p-1}\left\Vert
f\right\Vert _{L^{p}(0,1)},1<p<\infty. \label{ex1}%
\end{equation}
Let us first remark that since $L(LogL)(0,1)$ is a Lorentz space, by
\cite[Lemma~II,5,2]{KPS}, to prove Yano's theorem we only need to establish
\[
\left\Vert Tf\right\Vert _{L^{1}(0,1)}\leq c\left\Vert f\right\Vert
_{LLogL(0,1)},
\]
for functions of the form $f(x)=\gamma\chi_{A},$ where $\gamma>0$ and
$\chi_{A}$ is the characteristic function of a measurable set $A\subset[0,1]$.
Now, taking limits when $p\rightarrow\infty$ in (\ref{ex1}) we see that%
\begin{equation}
\left\Vert Tf\right\Vert _{L^{\infty}(0,1)}\leq c\left\Vert f\right\Vert
_{L^{\infty}(0,1)}. \label{ex2}%
\end{equation}
Let $t>s=m(A)$ ($m(A)$ is Lebesgue measure of $A$), then for any $p>1,$%
\begin{align*}
\int_{s}^{t}(Tf)^{\ast}(u)du  &  \leq\left\{  \int_{s}^{t}[(Tf)^{\ast}%
(u)]^{p}du\right\}  ^{1/p}(t-s)^{1/p^{\prime}}\\
&  \leq cp^{\prime}\left\Vert f\right\Vert _{L^{p}(0,1)}s^{1/p^{\prime}}%
(\frac{t}{s}-1)^{1/p^{\prime}}\\
&  =cp^{\prime}\gamma s^{1/p}s^{1/p^{\prime}}(\frac{t}{s}-1)^{1/p^{\prime}}\\
&  =cp^{\prime}\gamma s(\frac{t}{s}-1)^{1/p^{\prime}}.
\end{align*}
Suppose that $\frac{t}{s}>e,$ then we can select $p^{\prime}=\log\frac{t}{s},$
and we get%
\begin{equation}
t(Tf)^{\ast\ast}(t)-s(Tf)^{\ast\ast}(s)=\int_{s}^{t}(Tf)^{\ast}(u)du\leq
ce\gamma s\log\frac{t}{s}. \label{ex3}%
\end{equation}
Moreover, by (\ref{ex2}) we have for any $u>0$%
\begin{equation}
u(Tf)^{\ast\ast}(u)\leq u\left\Vert Tf\right\Vert _{L^{\infty}}\leq
uc\left\Vert f\right\Vert _{L^{\infty}}=cu\gamma. \label{ex4}%
\end{equation}
Therefore, if we let $u=s,$ we get%
\[
s(Tf)^{\ast\ast}(s)\leq cs\gamma=c\left\Vert f\right\Vert _{L^{1}}.
\]
Inserting this last inequality in (\ref{ex3}), we find that for $t>es,$
\[
t(Tf)^{\ast\ast}(t)\leq ce\left\Vert f\right\Vert _{L^{1}}(1+\log\frac{t}%
{s}),
\]
while if $t\leq es,$ then we can apply (\ref{ex4}) to find%
\[
t(Tf)^{\ast\ast}(t)\leq c\gamma t=ct\left\Vert f\right\Vert _{L^{\infty}}.
\]
Note that since $t(Tf)^{\ast\ast}(t)=K(t,Tf;L^{1},L^{\infty}),$ and
\[
ces\gamma(1+\log\frac{t}{s})=ce\left\Vert f\right\Vert _{L^{1}}(1+\log
\frac{t\left\Vert f\right\Vert _{L^{\infty}}}{\left\Vert f\right\Vert _{L^{1}%
}})
\]
we have the $K/J$ inequality%
\[
K(t,Tf;L^{1},L^{\infty})\leq ce\left\Vert f\right\Vert _{L^{1}}\phi
(\frac{t\left\Vert f\right\Vert _{L^{\infty}}}{\left\Vert f\right\Vert
_{L^{1}}}),
\]
where%
\[
\phi(u)=\left\{
\begin{array}
[c]{cc}%
e(1+\log u), & u\geq e\\
eu, & u\leq e
\end{array}
\right.  .
\]
The representing measure for $\phi$ is given by
\[
d\mu(r)\simeq w(r)dr,\text{ where }w(r)=\left\{
\begin{array}
[c]{cc}%
\frac{1}{r}, & u\geq e\\
0, & u\leq e
\end{array}
\right.
\]
so that%
\begin{align*}
K(t,Tf;L^{1},L^{\infty})  &  \leq C\int_{e}^{\infty}K(\frac{t}{r}%
,f;L^{1},L^{\infty})\frac{dr}{r}\\
&  \leq C\int_{0}^{t}K(u,f;L^{1},L^{\infty})\frac{du}{u}.
\end{align*}
Therefore,%
\begin{equation}
(Tf)^{\ast\ast}(t)\leq\frac{C}{t}\int_{0}^{t}f^{\ast\ast}(u)du. \label{ex5}%
\end{equation}
Yano's theorem then follows letting $t=1,$ which yields%
\begin{align*}
\left\Vert Tf\right\Vert _{L^{1}(0,1)}  &  \leq C\int_{0}^{1}f^{\ast\ast
}(u)du\\
&  \approx C\left\Vert f\right\Vert _{LLogL(0,1)}.
\end{align*}
The result also holds when dealing with infinite measure spaces, in which case%
\[
(Tf)^{\ast\ast}(1)=\left\Vert Tf\right\Vert _{L^{1}+L^{\infty}},\text{ and
}\int_{0}^{1}f^{\ast\ast}(u)du\approx\left\Vert f\right\Vert _{LLogL+L^{\infty
}},
\]
yielding%
\[
\left\Vert Tf\right\Vert _{L^{1}+L^{\infty}}\leq C\left\Vert f\right\Vert
_{LLogL+L^{\infty}}.
\]
Note that (\ref{ex5}) gives us back (\ref{ex1}). Indeed, since (\ref{ex5}) can
be rewritten as%
\[
\int_{0}^{t}(Tf)^{\ast}(s)ds\leq{C}\int_{0}^{t}f^{\ast\ast}(u)du,\text{ for
all }t>0,
\]
by the Calder\'{o}n-Mityagin principle we have that for all $p\geq1$%
\[
\left\Vert Tf\right\Vert _{L^{p}}\leq C\left\Vert f^{\ast\ast}\right\Vert
_{L^{p}}.
\]
In particular, if $p>1,$ we can continue the estimate of the right hand side
using Hardy's inequality to obtain%
\[
\left\Vert Tf\right\Vert _{L^{p}}\leq C\frac{p}{p-1}\left\Vert f\right\Vert
_{L^{p}}.
\]

\end{example}

\begin{problem}
We ask for a systematic \textquotedblleft elementary\textquotedblright%
\ treatment of extrapolation for general weights $M(p)$.
\end{problem}

\section{$\mathbf{F}$-functors and extrapolation r.i. spaces}

The $e^{L^{1/\alpha}}$ spaces and the $L(LogL)^{\alpha}$ spaces are prototypes
of "extrapolation" or limiting spaces for the scale of $L^{p}$-spaces. Let us
also remark that the $e^{L^{1/\alpha}}$ spaces belong to the class of
Marcinkiewicz spaces, while the $L(LogL)^{\alpha}$ spaces can be seen to be
Lorentz spaces. More generally, it is easy to see that the spaces obtained by
applying the $\Delta-$method of extrapolation to $\{L^{p}\}$ scales can be
described as Marcinkiewicz spaces (cf. Theorem \ref{intersectformula}), on the
other hand, the corresponding $\sum-$spaces can be described as Lorentz spaces
(cf. Theorem \ref{sumformula}). It is then natural to ask for a
characterization of all the Lorentz or Marcinkiewicz spaces that can be
obtained by extrapolation methods applied to scales of $L^{p}$-spaces. More
generally, one would like to describe all the r.i. spaces that can be obtained
by extrapolation of $L^{p}$ spaces. For definiteness, we shall only consider
here r.i. spaces on $[0,1]$. In this setting, $L^{\infty}$ and $L^{1}$ are the
smallest and the largest r.i. spaces, respectively. The prototype scale
associated with the pair $(L^{1},L^{\infty})$ is, of course,
\[
L^{p}=\langle L^{1},L^{\infty}\rangle_{1/p^{\prime},p;K}^{\blacktriangleleft
},\;\;1<p<\infty.
\]
with norm equivalence independent of $p$ \cite[Example~7]{Mil-03}. So our
prototype problem in this section is to characterize certain subclasses of
r.i. spaces $X$ that are in a suitable sense "close" to either $L^{\infty}$ or
to $L^{1}$, and whose norms can be obtained by *extrapolation*, that is, by
using the $\mathbf{F}$-functors of extrapolation that are described briefly in
Appendix~\ref{sec:extrapol}.

Let us consider, for example, the case of spaces $X$ \ "close" to $L^{\infty}%
$, in the sense that $X\subset L^{p},$ for all $p<\infty.$ Our aim is to
describe all r.i. spaces $X$ such that%
\[
X=\mathbf{F}(\{L^{p}\}_{1<p<\infty}),
\]
for some extrapolation functor $\mathbf{F}$, in which case we shall say that
$X$ is an \textit{extrapolation space (at $\infty$).} This can be reformulated
as follows. For each Banach function lattice $F$ on $[1,\infty),$ we let
\[
\mathcal{L}_{F}=\mathbf{F}(\{L^{p}\}_{1<p<\infty})=\{f:[0,1]\rightarrow
\mathbb{R},\text{ such that }\xi_{f}(p):=\Vert f\Vert_{p}\in F\},
\]%
\[
\left\Vert f\right\Vert _{{\mathcal{L}}_{F}}:=\left\Vert \xi_{f}\right\Vert
_{F}.
\]
Since $L^{p}$ is r.i., $\xi_{f}=\xi_{f^{\ast}}$ and consequently
$\mathcal{L}_{F}$ is a r.i. space$.$ Our aim then is to characterize the class
of r.i. spaces $X$, which we denote by $\mathcal{E}_{\infty}$, such that there
exists $F$ so that%
\[
X=\mathcal{L}_{F}\;\;\mbox{(with equivalence of norms)}.
\]
Clearly, this construction is a natural generalization of the $\Delta
$-functor, which corresponds to choosing $F$ to be a weighted $L^{\infty}-$space.

\subsection{Extrapolation characterization of Marcinkiewicz, Orlicz and
Lorentz spaces}

Let $\varphi$ be a quasi-concave function on $[0,1]$. The Marcinkiewicz space
$M(\varphi)$ consists of all measurable functions $f(t)$ on $[0,1]$, such
that
\begin{equation}
\Vert f\Vert_{M(\varphi)}=\sup_{0<s\leq1}{\frac{\varphi(s)}{s}\cdot
\int\limits_{0}^{s}}f{{^{\ast}(t)\,dt=}}\sup_{0<s\leq1}\varphi(s)f{{^{\ast
\ast}(s)\,}}<\infty. \label{mar1}%
\end{equation}

We shall now consider the problem of identifying the Marcinkiewicz spaces
$M(\varphi)$ that belong to $\mathcal{E}_{\infty}$. Suppose then that
$M(\varphi)\subset L^{p}$ for all $1\leq p<\infty$. It follows readily that
$\lim_{t\rightarrow0+}\tilde{\varphi}(t)=0$, where $\tilde{\varphi
}(t):=t/\varphi(t)$. Consequently, the function $\tilde{\varphi}^{\prime}$ is
absolutely continuous on $[0,1],$ $(\tilde{\varphi}^{\prime}){{^{\ast\ast
}(s)=}}\tilde{\varphi}(s)/s$ and therefore we have $\Vert\tilde{\varphi
}^{\prime}\Vert_{M(\varphi)}=1.$ The assumption that $M(\varphi)\subset L^{p}$
for all $p<\infty,$ therefore implies that $\tilde{\varphi}^{\prime}\in L^{p}$
for all $p<\infty.$ Moreover, from the definition (\ref{mar1}) we see that for
all $0<s\leq1,$%
\[
\int\limits_{0}^{s}f{^{\ast}(t)dt} {\leq}\Vert f\Vert_{M(\varphi)}%
\tilde{\varphi}(s) =\Vert f\Vert_{M(\varphi)}\int\limits_{0}^{s}%
{\tilde{\varphi}^{\prime}(t)dt.}%
\]
Therefore, by the Calder\'{o}n-Mityagin theorem (see e.g.
\cite[Theorem~II.4.3]{KPS}), we conclude that for all $f\in M(\varphi)$ and
$1\leq p<\infty$%
\[
\Vert f\Vert_{p}\leq\Vert f\Vert_{M(\varphi)}\cdot\Vert\tilde{\varphi}%
^{\prime}\Vert_{p}.
\]
In other words,
\begin{equation}
M(\varphi)\overset{1}{\subset}{\mathcal{L}}_{F^{\varphi}}, \label{Marc1}%
\end{equation}
where $F^{\varphi}$ is the weighted Banach lattice $L^{\infty}(1/{\Vert
\tilde{\varphi}^{\prime}\Vert_{p}})$.

\begin{remark}
Recall that the fundamental function of a r.i. space $X$ is defined by
$\phi_{X}(t):=\Vert\chi_{\lbrack0,t]}\Vert_{X}$, $0<t\leq1$. In particular,
$\phi_{M(\varphi)}(t)=\varphi(t)$. It follows readily that $M(\varphi)$ is the
largest among all r.i. spaces with the fundamental function $\varphi(t)$ (cf.
\cite[Theorem~II.5.7]{KPS}), the fact which we will need to prove the next proposition.
\end{remark}

\begin{proposition}
\label{prop:Marc} Let $\varphi$ be a quasi-concave function on $[0,1]$. The
following conditions are equivalent:

(i) $M(\varphi)\in\mathcal{E}_{\infty}$;

(ii) $M(\varphi)={\mathcal{L}}_{F^{\varphi}};$

(iii) there exists $C>0$ such that
\begin{equation}
\varphi(t)\leq C\cdot\sup_{p\geq1}\frac{t^{\frac{1}{p}}}{\Vert\tilde{\varphi
}^{\prime}\Vert_{p}},\quad0<t\leq1. \label{mar2}%
\end{equation}

\end{proposition}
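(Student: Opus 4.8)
The plan is to establish the circle of implications $(ii)\Rightarrow(i)\Rightarrow(iii)\Rightarrow(ii)$, using throughout the fact that $M(\varphi)$ is the largest r.i. space with fundamental function $\varphi$ together with the embedding $M(\varphi)\overset{1}{\subset}\mathcal{L}_{F^{\varphi}}$ recorded in $(\ref{Marc1})$.

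The implication $(ii)\Rightarrow(i)$ is immediate, since $\mathcal{L}_{F^{\varphi}}$ is by construction of the form $\mathbf{F}(\{L^p\}_{1<p<\infty})$ with $\mathbf{F}$ the extrapolation functor attached to the weighted lattice $F^{\varphi}=L^{\infty}(1/\Vert\tilde\varphi'\Vert_p)$, so $M(\varphi)=\mathcal{L}_{F^{\varphi}}$ exhibits $M(\varphi)$ as an extrapolation space at $\infty$.

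For $(i)\Rightarrow(iii)$: suppose $M(\varphi)=\mathcal{L}_F$ for some Banach function lattice $F$ on $[1,\infty)$. First I would note that $M(\varphi)\subset\mathcal{L}_F$ forces $M(\varphi)\subset L^p$ for every $p<\infty$ (otherwise $\xi_f$ would not even be finite), so the preliminary analysis before the proposition applies: $\tilde\varphi(t)\to 0$, $\tilde\varphi'\in L^p$ for all $p$, and $\Vert\tilde\varphi'\Vert_{M(\varphi)}=1$. Now test the identity $\Vert f\Vert_{\mathcal{L}_F}=\Vert\xi_f\Vert_F$ on the functions $f=\chi_{[0,t]}$: one has $\Vert\chi_{[0,t]}\Vert_p=t^{1/p}$, so $\xi_{\chi_{[0,t]}}(p)=t^{1/p}$, hence $\Vert\chi_{[0,t]}\Vert_{\mathcal{L}_F}=\Vert t^{1/\bullet}\Vert_F$. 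On the other hand $\Vert\chi_{[0,t]}\Vert_{M(\varphi)}=\varphi(t)$. Since $M(\varphi)=\mathcal{L}_F$ with equivalence of norms, $\varphi(t)\approx\Vert t^{1/\bullet}\Vert_F$. The remaining point is to compare $\Vert t^{1/\bullet}\Vert_F$ with the elementary quantity $\sup_{p\ge1}t^{1/p}/\Vert\tilde\varphi'\Vert_p$. For the upper bound on $\varphi$ I would use that $\tilde\varphi'\in\mathcal{L}_F$ (because $\Vert\tilde\varphi'\Vert_{M(\varphi)}=1$ and $M(\varphi)\subset\mathcal{L}_F$), so $\xi_{\tilde\varphi'}(p)=\Vert\tilde\varphi'\Vert_p\in F$; then for each fixed $t$ the function $p\mapsto t^{1/p}$ is dominated by $(\sup_{q\ge1}t^{1/q}/\Vert\tilde\varphi'\Vert_q)\cdot\Vert\tilde\varphi'\Vert_p$ pointwise, and applying the lattice norm $\Vert\cdot\Vert_F$ gives $\Vert t^{1/\bullet}\Vert_F\le C(\sup_{q}t^{1/q}/\Vert\tilde\varphi'\Vert_q)$, which is $(\ref{mar2})$.

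For $(iii)\Rightarrow(ii)$: the embedding $(\ref{Marc1})$ gives $M(\varphi)\overset{1}{\subset}\mathcal{L}_{F^{\varphi}}$, so only the reverse embedding $\mathcal{L}_{F^{\varphi}}\subset M(\varphi)$ needs proof. Take $f\in\mathcal{L}_{F^{\varphi}}$, i.e. $\sup_{p\ge1}\Vert f\Vert_p/\Vert\tilde\varphi'\Vert_p<\infty$; call this supremum $\Lambda$. Then $\Vert f\Vert_p\le\Lambda\Vert\tilde\varphi'\Vert_p$ for all $p$, and in particular $f^{**}(t)t^{1/p}\le\Vert f\Vert_p\le\Lambda\Vert\tilde\varphi'\Vert_p$, whence $f^{**}(t)\le\Lambda\,t^{-1/p}\Vert\tilde\varphi'\Vert_p$ for every $p\ge1$. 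Taking the infimum over $p$ and invoking $(\ref{mar2})$ in the form $\inf_{p\ge1}t^{-1/p}\Vert\tilde\varphi'\Vert_p\le C/\varphi(t)$ yields $f^{**}(t)\le C\Lambda/\varphi(t)$, i.e. $\varphi(t)f^{**}(t)\le C\Lambda$, so $f\in M(\varphi)$ with $\Vert f\Vert_{M(\varphi)}\le C\Lambda=C\Vert f\Vert_{\mathcal{L}_{F^{\varphi}}}$. Combining with $(\ref{Marc1})$ gives $M(\varphi)=\mathcal{L}_{F^{\varphi}}$.

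The main obstacle I anticipate is $(i)\Rightarrow(iii)$, specifically the passage from an arbitrary lattice $F$ realizing $M(\varphi)=\mathcal{L}_F$ to the canonical weighted-$L^\infty$ lattice $F^{\varphi}$: one must extract from the single scalar quantity $\Vert t^{1/\bullet}\Vert_F$ a two-sided comparison with $\sup_p t^{1/p}/\Vert\tilde\varphi'\Vert_p$ without knowing $F$ explicitly. The key leverage is that $\tilde\varphi'$ is a distinguished element of $M(\varphi)$ with norm exactly $1$, so $\xi_{\tilde\varphi'}$ is a normalized element of $F$; this pins down enough of $F$ via the lattice property to carry the estimate through. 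One should double-check the uniformity of constants and the quasi-concavity hypotheses (so that $\varphi(t)/t$ is non-increasing, $\tilde\varphi$ concave, etc.), but these are routine given the background in \cite{KPS}.
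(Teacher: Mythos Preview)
Your proof is correct and follows essentially the same approach as the paper: the decisive step in both is the observation that $\tilde\varphi'$ is a normalized element of $\mathcal{L}_F$, so the lattice property of $F$ lets one pass from an arbitrary parameter $F$ to the canonical weighted-$L^\infty$ parameter $F^{\varphi}$. The paper organizes the equivalences as $(ii)\leftrightarrow(iii)$ and $(i)\leftrightarrow(ii)$ rather than your cycle, and for $(iii)\Rightarrow(ii)$ it invokes the maximality of $M(\varphi)$ among r.i.\ spaces with fundamental function $\varphi$ (giving $\mathcal{L}_{F^{\varphi}}\subset M(\varphi_{\mathcal{L}_{F^{\varphi}}})\subset M(\varphi)$) instead of your direct H\"older estimate $f^{**}(t)t^{1/p}\leq\Vert f\Vert_p$; these are cosmetic differences.
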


\begin{proof}
First of all, each of the conditions (i), (ii), (iii) implies that
$M(\varphi)\subset L^{p}$ for all $p<\infty$. Therefore, in all three cases we
have embedding \eqref{Marc1}.

[(ii)$\leftrightarrow$(iii)]. Suppose that (iii) holds. Then,
\begin{align*}
\varphi(t)  &  \leq C\cdot\sup_{p\geq1}\frac{t^{\frac{1}{p}}}{\Vert
\tilde{\varphi}^{\prime}\Vert_{p}}\\
&  =C\cdot\sup_{p\geq1}\frac{1}{\Vert\tilde{\varphi}^{\prime}\Vert_{p}}%
\Vert\chi_{\lbrack0,t]}\Vert_{p}\\
&  =C\cdot\left\Vert \chi_{\lbrack0,t]}\right\Vert _{{\mathcal{L}}%
_{F^{\varphi}}}\\
&  =C\cdot\varphi_{{\mathcal{L}}_{F^{\varphi}}}(t).
\end{align*}
It follows (cf. the Remark preceding the statement of this Proposition) that
${\mathcal{L}}_{F^{\varphi}}\subset M(\varphi_{{\mathcal{L}}_{F^{\varphi}}%
})\subset M(\varphi),$ which combined with (\ref{Marc1}) yields that
${\mathcal{L}}_{F^{\varphi}}=M(\varphi).$ Conversely, if ${\mathcal{L}%
}_{F^{\varphi}}=M(\varphi),$ then $\varphi\approx\varphi_{{\mathcal{L}%
}_{F^{\varphi}}}$ and (iii) trivially holds.

[(i)$\leftrightarrow$(ii)]. Since the implication $(ii)\Rightarrow(i)$ is
obvious, we only need to prove the converse. Suppose then that $M(\varphi
)={\mathcal{L}}_{F_{1}}$ for some Banach function lattice $F_{1}$. We will now
show that ${\mathcal{L}}_{F_{1}}={\mathcal{L}}_{F^{\varphi}}.$ Indeed, by
(\ref{Marc1}), ${\mathcal{L}}_{F_{1}}=M(\varphi)\overset{1}{\subset
}{\mathcal{L}}_{F^{\varphi}}.$On the other hand, suppose that $g\in
{\mathcal{L}}_{F^{\varphi}},$ then for all $p>1$
\[
\xi_{g}(p)\leq\Vert g\Vert_{{\mathcal{L}}_{F^{\varphi}}}\left\Vert
\tilde{\varphi}^{\prime}\right\Vert _{p}.
\]
Therefore, applying the lattice norm $F_{1}$ to the previous inequality, we
get%
\begin{align*}
\left\Vert \xi_{g}\right\Vert _{F_{1}}  &  \leq\Vert g\Vert_{{\mathcal{L}%
}_{F^{\varphi}}}\left\Vert \left\Vert \tilde{\varphi}^{\prime}\right\Vert
_{p}\right\Vert _{F_{1}}\\
&  =\Vert g\Vert_{{\mathcal{L}}_{F^{\varphi}}}\left\Vert \tilde{\varphi
}^{\prime}\right\Vert _{{\mathcal{L}}_{F_{1}}}\\
&  \approx\Vert g\Vert_{{\mathcal{L}}_{F^{\varphi}}}\left\Vert \tilde{\varphi
}^{\prime}\right\Vert _{M(\varphi)}\\
&  =\Vert g\Vert_{{\mathcal{L}}_{F^{\varphi}}}.
\end{align*}
Consequently,%
\[
\left\Vert g\right\Vert _{{\mathcal{L}}_{F_{1}}}=\left\Vert \xi_{g}\right\Vert
_{F_{1}}\leq C\Vert g\Vert_{{\mathcal{L}}_{F^{\varphi}}},
\]
and therefore $g\in{\mathcal{L}}_{F_{1}},$ concluding the proof.
\end{proof}

\begin{problem}
\label{prob: r.i. spaces0} Let $w(p)$ be a bounded positive function on
$[1,\infty)$. We set $X_{w}:=\Delta_{1\leq p<\infty}(w(p)L^{p})$. Then,
$X_{w}$ is a r.i. space with the fundamental function $\phi_{X_{w}}%
(t)=\sup_{1\leq p<\infty}(w(p)t^{1/p})$. By Proposition~\ref{prop:Marc}, from
$M(\varphi)\in\mathcal{E}_{\infty}$ it follows that $M(\varphi)=X_{w}$ with
$w(p)=1/{\Vert\tilde{\varphi}^{\prime}\Vert_{p}}$. We ask: What other r.i.
spaces may be represented as spaces of the $X_{w}$-type? It is known \cite[Theorem~4.7]{AL} that if
an Orlicz space $L_{M}$ coincides with some Marcinkiewicz space, then $%
L_{M}\in\mathcal{E}_{\infty}$ and therefore it coincides with the space $%
X_{w}$ for some $w$. In contrast to that, in \cite[Proposition 3.4]{ALMA}, one can find examples of the Orlicz spaces of the $X_w$-type that do not coincide with Marcinkiewicz spaces. 
Thus, it is natural to ask, which Orlicz spaces are spaces of the $X_{w}$-type? 
\end{problem}

\begin{problem}
\label{prob: D-concave spaces} In connection with
Problem~\ref{prob: r.i. spaces0}, maybe it could be useful to take into
account that every space $X_{w}$ is $D$-convex (for the definitions, we refer
to \cite{M-S-S} or \cite{ASW}). Moreover, it is known (see \cite[Theorem~23]%
{M-S-S} or \cite[Corollary~4.10]{ASW}) that a r.i. space $X$ with the Fatou
property coincides with an Orlicz space if and only if $X$ is $D$-convex and
$D$-concave (some authors refer to the latter property as $D^{\ast}%
$-convexity). Therefore, we ask: Under what conditions on a weight $w$, is the
space $X_{w}$ $D$-concave?
\end{problem}

\begin{problem}
\label{prob:grand}(Open ended) Let us remark that, with minor modifications,
the framework we are discussing here could be used to derive a generalized
theory of \textquotedblleft Grand Lebesgue spaces\textquotedblright\ (cf.
Section \ref{sec:grand} below for definitions and background). In fact, a
natural setting for generalized \textquotedblleft Grand Lebesgue
spaces\textquotedblright\ could be have by means of replacing $\Delta_{1\leq
p<\infty}(w(p)L^{p})$ by $\Delta_{\theta\in I}(w(\theta)L^{p(\theta)}).$
\end{problem}

\begin{problem}
\label{prob: r.i. spaces1} Give a characterization of Lorentz spaces from the
class $\mathcal{E}_{\infty}$. Recall that the norm in the Lorentz space
$\Lambda_{p}(\varphi)$, where $\varphi$ is an increasing concave function on
$[0,1]$, $\varphi(0)=0$, and $1\leq p<\infty$, is defined as follows:
\[
\Vert f\Vert_{\Lambda_{p}(\varphi)}:=\left(  \int\limits_{0}^{1}(f^{\ast
}(t))^{p}d\varphi(t)\right)  ^{1/p}.
\]
Moreover, we ask to introduce in a similar way a notion of extrapolation r.i.
spaces at $1$, i.e., as $p\rightarrow1+$, which would therefore generalize the
$\Sigma$-functor, and then using this notion to give a description of
Marcinkiewicz, Lorentz, Orlicz spaces that are \textquotedblleft extrapolation
at 1\textquotedblright.
\end{problem}

\begin{remark}
Concerning Problem \ref{prob: r.i. spaces1} we note that some partial results
related to a description of Lorentz spaces from the class $\mathcal{E}%
_{\infty}$ were obtained in \cite[Theorem~3]{AL2006}.
\end{remark}

\begin{problem}
\label{prob:last}The same type of questions can be formulated in the
non-commutative setting. In this context instead of $L^{p}$-spaces, we deal
with the scale of Schatten ideals ${{\mathfrak{S}}}^{p}$, $1<p<\infty$, of
compact operators acting in a separable complex Hilbert space (see
Example~\ref{non-commutative}). It is natural to ask similar questions in
connection with an extrapolation description of Schatten ideals. Some partial
results can be found in \cite{ALM}.
\end{problem}

\begin{problem}
\label{prob: r.i. spaces2} We are asking whether any r.i. space $X$ on $[0,1]$
such that the $X$-norm of every function is determined by the family of its
$L^{p}$-norms coincides with a space of the form ${\mathcal{L}}_{F}$ for a
suitable Banach function lattice $F$ on $[1,\infty)$? More formally, let $X$
be a r.i. space on $[0,1]$ such that $X\subset L_{p}$ for all $p<\infty$.
Suppose that there is $p_{0}>0$ such that from the inequality $\Vert
x\Vert_{p}\leq C\Vert y\Vert_{p}$, for some $C>0$ and all $p\geq p_{0}$, it
follows that $\Vert x\Vert_{X}\leq\Vert y\Vert_{X}$. Does this imply that
$X={\mathcal{L}}_{F}$ for some parameter $F$?
\end{problem}

\begin{problem}
(Open ended) In connection with Problems \ref{prob:grand} and \ref{prob:last}
we are led to ask for the corresponding theory of Non-Commutative Grand
$L^{p}$ spaces. We believe that abstract Extrapolation theory provides the
right tools to develop this project.
\end{problem}

\subsection{Tempered $\mathbf{{F}}$-parameters and strong extrapolation r.i.
spaces}

The following definition, introduced in \cite{AL}, could be considered as a
natural generalization of the notion of a tempered weight.

\begin{definition}
We shall say that a parameter space $F$ (i.e. $F$ is a Banach function lattice
on $[1,\infty)$) of an extrapolation $\mathbf{{F}}$-method is
\textit{tempered} if the operator $Df(p):=f(2p)$ is bounded on $F$.
\end{definition}

It turns out that the spaces $\mathcal{L}_{F}$ with tempered parameters $F$
form a very special subclass of the extrapolation at $\infty$ r.i. spaces.

Let $X$ be a r.i. space on $[0,1]$, we denote by $\tilde{X}$ the Banach
lattice of all the measurable functions $f$ on $(1,\infty)$ such that
\[
{\Vert f\Vert}_{\tilde{X}}:={\Vert f\big(\log(e/t)\big)\Vert}_{X}<\infty.
\]

\begin{definition}
\label{StrongKvSdvig} We shall say that a r.i. space $X$ is a \textit{strong
extrapolation space with respect to the $L^{p}$-scale} (in which case we shall
write $X\in\mathcal{SE_{\infty}})$ if $X={\mathcal{L}}_{\tilde{X}}$ (with
equivalence of norms).
\end{definition}

By definition, if $X\in\mathcal{SE_{\infty}}$, then the corresponding
extrapolation parameter $F$ is explicitly determined by $X$. More precisely,
\[
{\Vert f\Vert}_{X}\approx\big\|\,\Vert f\Vert_{\log(e/t)}\,\big\|_{X},
\]
where, consistently with our notation throughout this paper, for each
$t\in(0,1),$ we let $\Vert f\Vert_{\log(e/t)}:=\Vert f\Vert_{L^{\log(e/t)}}.$

The class $\mathcal{SE_{\infty}}$ admits a simple characterization (see
\cite[Theorem~4.3]{AL2017}).

\begin{theorem}
\label{KrStrExt2} Let $X$ be a r.i. space on $[0,1].$ The following conditions
are equivalent:

(1) $X={\mathcal{L}}_{F}$ $\ $for some tempered extrapolation parameter $F$;

(2) $X\in\mathcal{SE}_{\infty}$;

(3) the operator $Sf(t)=f(t^{2})$ is bounded on $X$.
\end{theorem}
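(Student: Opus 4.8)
The plan is to prove the cyclic chain of implications $(1)\Rightarrow(2)\Rightarrow(3)\Rightarrow(1)$, exploiting the explicit formula $\Vert f\Vert_{{\mathcal L}_F}=\Vert\xi_f\Vert_F$ with $\xi_f(p)=\Vert f\Vert_{p}$ and the change of variable $p=\log(e/t)$ that links the scale parameter $p\in[1,\infty)$ with the ``time'' variable $t\in(0,1)$ via $\tilde X$. The first step, $(1)\Rightarrow(2)$, is where I expect to spend most of the effort: assuming $X={\mathcal L}_F$ for a tempered parameter $F$, I must show that in fact $F$ can be replaced by $\tilde X$, i.e. that $\Vert\xi_f\Vert_F\approx\Vert\xi_f\big(\log(e/t)\big)\Vert_X$. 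The key observation is that $\xi_f=\xi_{f^\ast}$ is already a fixed decreasing function of $p$, so it suffices to compare the two lattice norms $F$ and $\tilde X$ on the cone of decreasing functions on $[1,\infty)$. One inclusion is automatic from $X\overset{1}{\subset}{\mathcal L}_{\tilde X}$, which follows from the Calder\'on--Mityagin argument exactly as in the Marcinkiewicz discussion preceding Proposition~\ref{prop:Marc}: for $f\in X$ one has $\Vert f\Vert_p\le\Vert f\Vert_X\cdot\Vert\chi_{[0,t]}\Vert_p^{-1}\cdots$ — more precisely one uses that the norm of $X$ dominates the $L^p$-norms up to the fundamental function, giving $\Vert\xi_f\big(\log(e/t)\big)\Vert_X\preceq\Vert f\Vert_X$. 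For the reverse inclusion ${\mathcal L}_{\tilde X}\subset X$ I would test against the ``extremal'' decreasing functions (characteristic functions $\chi_{[0,s]}$, whose $\xi$-profiles are $s^{1/p}$), use the tempering hypothesis on $F$ to control dyadic rescalings, and invoke the representation of $X$-norms of decreasing functions via their fundamental function and the Calder\'on--Mityagin principle, in the spirit of the $[(i)\leftrightarrow(ii)]$ step of Proposition~\ref{prop:Marc}.

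The step $(2)\Rightarrow(3)$ should be comparatively direct. If $X={\mathcal L}_{\tilde X}$, then for every $f\in X$,
\[
\Vert f\Vert_X\approx\big\Vert\,\Vert f\Vert_{\log(e/t)}\,\big\Vert_X.
\]
I would apply this identity to $f$ and, observing that replacing $t$ by $t^2$ corresponds to replacing $p=\log(e/t)$ by $2p-1\approx 2p$, rewrite $\Vert Sf\Vert_X=\Vert f(t^2)\Vert_X$ in terms of a dilation in the $p$-variable. The boundedness of $S$ on $X$ then becomes the boundedness of the dilation operator $Df(p)=f(2p)$ on the lattice of ``profiles'' sitting inside $X$ under the logarithmic change of variables; since this lattice is, by $(2)$, a renorming of $\tilde X$ (hence of $X$ itself), and since the inequality $\Vert f\Vert_{2p}\le\Vert f\Vert_{p}$ for functions on $[0,1]$ makes the profile $p\mapsto\Vert f\Vert_p$ decreasing, one gets $\Vert Sf\Vert_X\preceq\Vert f\Vert_X$ from the monotonicity of the $X$-norm on decreasing functions together with $\log(e/t^2)\le 2\log(e/t)$. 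The care needed here is only in tracking the additive constant ($2p-1$ versus $2p$), which is harmless because $L^p$-norms on a probability space depend continuously and monotonically on $p$.

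Finally, $(3)\Rightarrow(1)$: assume $S$ is bounded on $X$ and take $F:=\tilde X$ as the candidate parameter. First I must check that $\tilde X$ is \emph{tempered} in the sense of the definition, i.e. that $Df(p)=f(2p)$ is bounded on $\tilde X$; unwinding the definition ${\Vert g\Vert}_{\tilde X}={\Vert g(\log(e/t))\Vert}_X$, this is exactly the boundedness of $S$ (up to the additive-constant adjustment noted above), so the hypothesis gives it. Then I must verify $X={\mathcal L}_{\tilde X}$. The inclusion $X\overset{1}{\subset}{\mathcal L}_{\tilde X}$ is the Calder\'on--Mityagin inequality already mentioned, valid for \emph{every} r.i. $X$ with $X\subset L^p$ for all $p$. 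For the reverse, given $f$ with $\Vert\xi_f\big(\log(e/t)\big)\Vert_X<\infty$, I would reconstruct a bound on $\Vert f\Vert_X$ by writing $f^{\ast\ast}$ as a superposition of the elementary profiles $t^{1/p}$ (optimizing over $p$ as in $(\ref{mar2})$), estimating each piece in $X$, and then summing using the boundedness of $S$ to absorb the dyadic losses — this is the standard ``strong extrapolation'' reconstruction and is where the hypothesis $(3)$ does its real work.

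The main obstacle I anticipate is the reverse inclusion ${\mathcal L}_{\tilde X}\subset X$ in the steps $(1)\Rightarrow(2)$ and $(3)\Rightarrow(1)$: one must pass from control of all the scalar $L^p$-norms of $f$ back to control of the $X$-norm, and the only mechanism for that is to express $X$-norms of rearrangements through an optimal decomposition into the extremal profiles $s\mapsto s^{1/p}$ and to pay for the overlap of dyadic blocks using the dilation boundedness. Making this summation rigorous — in particular obtaining a constant independent of $f$ — is the delicate point, and it is precisely here that one needs either the tempering of $F$ (in $(1)\Rightarrow(2)$) or the boundedness of $S$ on $X$ (in $(3)\Rightarrow(1)$); everything else is bookkeeping with the logarithmic change of variables and the Calder\'on--Mityagin principle.
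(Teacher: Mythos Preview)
The paper does not actually prove this theorem: it states the result and refers the reader to \cite[Theorem~4.3]{AL2017}. So there is no in-paper argument against which to compare your proposal.

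As for the proposal itself, there is a concrete error that undermines your $(2)\Rightarrow(3)$ step. On a probability space the map $p\mapsto\Vert f\Vert_{p}$ is \emph{increasing}, not decreasing; you assert the opposite twice (``$\xi_f$ is already a fixed decreasing function of $p$'', ``$\Vert f\Vert_{2p}\le\Vert f\Vert_p$''). After the change of variable $p=\log(e/t)$ the function $g(t)=\Vert f\Vert_{\log(e/t)}$ is indeed decreasing in $t$, but then $(Sg)(t)=g(t^{2})\ge g(t)$ pointwise (since $t^{2}<t$), so the trivial monotonicity bound goes the wrong way and cannot deliver $\Vert Sf\Vert_{X}\preceq\Vert f\Vert_{X}$. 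More substantively, the identification you lean on --- that applying $S$ to $f$ corresponds to the dilation $p\mapsto 2p$ on the profile $\xi_f$ --- is not correct at the level of $L^{p}$-norms: a direct substitution gives
\[
\Vert Sf^{\ast}\Vert_{p}^{p}=\tfrac12\int_{0}^{1} f^{\ast}(s)^{p}\,s^{-1/2}\,ds,
\]
which is a Lorentz $L(2p,p)$-quantity, not $\Vert f\Vert_{2p}^{p}$. One does get a clean pointwise estimate of the type $\xi_{Sf}(p)\le C\,\xi_f(4p)$ (via $f^{\ast}(s)\le s^{-1/(4p)}\Vert f\Vert_{4p}$), and this is exactly what makes $(1)\Rightarrow(3)$ work once $F$ is tempered; but in your cycle you reach $(3)$ from $(2)$, where temperedness of $\tilde X$ is not yet available, and the argument becomes circular. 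The reverse inclusion ${\mathcal L}_{\tilde X}\subset X$ that you correctly flag as the crux in $(1)\Rightarrow(2)$ and $(3)\Rightarrow(1)$ is indeed where the work lies; your dyadic-reconstruction idea is plausible in outline but remains a heuristic until the estimate above (or an equivalent one) is inserted and the summation is carried out with constants independent of $f$.
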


The class $\mathcal{SE_{\infty}}$ is rather wide. In particular, the Zygmund
$\mathrm{Exp}\,L^{\alpha}$ spaces, with $\alpha>0$, lie in this class.
Moreover, a Marcinkiewicz space $\mathcal{M}(\varphi)$ (resp. a Lorentz space
$\Lambda(\varphi)$) belongs to the class $\mathcal{SE_{\infty}}$ if and only
if $\varphi(t)\approx\varphi(t^{2})$, $0<t\leq1$ (cf. \cite[Theorem
2.10]{AL2009}). In this connection it is worth to note that, in the definition
of the space $X(\log^{-1})$ (see Example~\ref{ex:yano}), the $^{\ast\ast}$ may
be replaced by $^{\ast}$, whenever $X\in\mathcal{SE_{\infty}}$ (cf.
\cite[Proposition~4.1]{AL2009}).

One can verify that, for every $1<p<\infty,$ we have
\[
L(2p,\infty)\overset{2}{\subset}L(p,1)\overset{1}{\subset}L(p,\infty).
\]
Hence, if $F$ is a tempered parameter, we have the following generalized
version of the second relation from \eqref{validity} (as it applies to the
pair $\vec{A}=(L^{1},L^{\infty}))$:
\begin{equation}
\mathbf{F}(\{L(p,1)\}_{1<p<\infty})=\mathbf{F}(\{L(p,\infty)\}_{1<p<\infty}),
\label{multiplier for F}%
\end{equation}
where $\mathbf{F}(\{L(p,q)\}_{1<p<\infty})$, $1\leq q\leq\infty$, is defined
exactly as the space $\mathcal{L}_{F}=\mathbf{F}(\{L^{p}\}_{1<p<\infty}),$
replacing $L^{p}$ by $L(p,q)$.

\begin{problem}
\label{prob:multiplier for F}(\textbf{The multiplier problem for
\textbf{F}-functor}) Characterize the parameters for the $\mathbf{{F}}$-method
of extrapolation that have the property \eqref{multiplier for F}.
\end{problem}


\section{Operators with a quasi-Banach target space}

The classical extrapolation theorems deal mainly with linear or sublinear
operators taking values on quasi-Banach spaces. For example we may want to
extrapolate estimates for maximal operators, e.g. the maximal operator $M$ of
Hardy-Littlewood, or the Carleson maximal operator $\mathfrak{C}$ defined by
\begin{equation}
\mathfrak{C}f(e^{i\theta}):=\sup_{N=1,2,\dots}|S_{N}f(e^{i\theta})|,
\label{Carleson}%
\end{equation}
where $S_{N}f(e^{i\theta}):=\sum_{n=-N}^{N}\hat{f}(n)e^{in\theta}$, and
$\hat{f}(n)$ is the $n$-th Fourier coefficient of $f$. Note that for these
examples the "natural target space\textquotedblright\ is the quasi-normed
space $L(1,\infty)$ (see e.g. \cite{AdR}).

By construction, the $\sum-$methods crucially use linearity and the triangle
inequality. But it is possible to modify the construction of $\sum-$methods in
order to be able to deal with some of these difficulties, although we shall
not discuss the issues in detail here (cf. \cite[Section 4, pages 35-44]{jm}).

Here an operator $T$ defined on a Banach space $X$ and taking values in the
set of all measurable functions $f:\,[0,1]\rightarrow\mathbb{R}\cup\{\pm
\infty\},$ is \textit{sublinear} if for some $B>0$ and an arbitrary expansion
$x=\sum_{j=1}^{\infty}x_{j}$, as a convergent series in $X$, we have
\[
|Tx(t)|\leqslant B\sum_{j=1}^{\infty}|Tx_{j}(t)|\quad\mbox{a.e. on}\;[0,1].
\]
\ \ \ \ \ 

Another important class of non-linear operators acting on function spaces are
those for which we have $T(f+g)=Tf+Tg,$ whenever functions $f$ and $g$ have
disjoint supports. It turns out that in the context of lattices one can find
suitable versions of the strong form of the fundamental Lemma (cf.
Appendix~\ref{sec:sfl}, including Remark \ref{re:delotro}) that guarantee the
existence of good representations $f=\sum f_{n}$ such that the $f_{n}$'s are
disjointly supported (cf. \cite{cwni}).

The weak $L^{1}-$space, $L(1,\infty),$ is quasi-normed but it barely misses to
be normable. It belongs to the class of logconvex quasi-Banach lattices. We
shall say that a quasi-Banach space $Y$ is called \textit{logconvex} if there
is a constant $C>0$ such that for all $y_{j}\in Y$ we have
\[
\left\Vert \sum_{j=1}^{\infty}y_{j}\right\Vert _{Y}\leqslant C\sum
_{j=1}^{\infty}(1+\log j){\Vert y_{j}\Vert}_{Y},
\]
(see e.g. \cite{Kalton}). $L(1,\infty)$ is \textit{logconvex} (cf. \cite[Lemma
2.3]{SW} or \cite[Theorem~3.4]{Kalton}).

The following result presents a version\footnote{We refer to
\cite{LykovMathSb} for more general results.} of Yano's extrapolation theorem
for operators that take values in a logconvex space$.$

\begin{theorem}
\cite[Theorem 9]{LykovMathSb}\label{ExtrLogConvexSub} Let $\mathcal{M}$ be the
set of all measurable functions $f:\,[0,1]\rightarrow\mathbb{R}$, and let
$Y\subset\mathcal{M}$ be a logconvex quasi-Banach lattice. Let $\alpha>0,$ and
let $T$ be a sublinear operator defined on the Lorentz space $\Lambda
(\psi_{\alpha})$, where
\[
\psi_{\alpha}(t){\approx}t\log^{\alpha}(b/t)\;\log\log\log(b/t),\quad
0<t\leq1,\quad\mbox{where}\;b>e^{e}.
\]
Furthermore, suppose that $T$ is bounded, $T:$ $L(p,1)\rightarrow Y$, $p>1$,
and for some $C>0$ and all $p>1$ we have
\begin{equation}
{\Vert T\Vert}_{L(p,1)\rightarrow Y}\leqslant C\left(  \frac{p}{p-1}\right)
^{\alpha}. \label{Quasi1}%
\end{equation}

Then, $T$ is a bounded operator
\[
T:\Lambda(\psi_{\alpha})\rightarrow Y.
\]

\end{theorem}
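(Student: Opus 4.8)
The plan is to run the classical Yano-type machinery --- decompose $f$ along its dyadic level sets, bound each resulting piece by interpolating the hypothesis \eqref{Quasi1} at an exponent tuned to the measure of that piece, and reassemble --- but with the one essential change that the reassembly must use the \emph{logconvexity} of $Y$ in place of the triangle inequality; it is the accounting of the logarithmic factors thereby produced that forces the iterated triple logarithm sitting inside $\psi_\alpha$. First I would reduce to bounded $f$ with $\Vert f\Vert_{\Lambda(\psi_\alpha)}\leq1$, and then to $|f|>1$ on $\operatorname{supp}f$: since $\Vert T\Vert_{L(p,1)\to Y}\leq C(p/(p-1))^\alpha$ stays bounded as $p\to\infty$ and $L^\infty[0,1]$ embeds into $L(p,1)$ with norm $\leq1$, the part of $f$ where $|f|\leq1$ contributes at most an absolute constant to $\Vert Tf\Vert_Y$.

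The building block is the estimate on a single characteristic function. For a set $A$ with $|A|=\sigma\in(0,1]$ one has $\Vert\chi_A\Vert_{L(p,1)}=\sigma^{1/p}$, so \eqref{Quasi1} gives $\Vert T\chi_A\Vert_Y\leq C(p/(p-1))^\alpha\sigma^{1/p}$ for every $p>1$. Choosing $p/(p-1)=\log(b/\sigma)$, which is legitimate since $b/\sigma\geq b>e^e$, makes $\sigma^{1/p}=\sigma\,\sigma^{-1/\log(b/\sigma)}\leq e\sigma$, whence
\[
\Vert T\chi_A\Vert_Y\preceq\sigma\log^\alpha(b/\sigma)\approx\frac{\psi_\alpha(\sigma)}{\log\log\log(b/\sigma)}\,.
\]
Thus on a characteristic function the target estimate already holds, and with a spare factor $1/\log\log\log(b/\sigma)$; this slack is exactly what must eventually absorb the losses incurred when many blocks are superposed.

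Then I would decompose $f=\sum_j f\chi_{A_j}$ with $A_j=\{2^j<|f|\leq2^{j+1}\}$ (a disjoint decomposition, only finitely many $j\geq0$ occurring, by boundedness), and set $\sigma_j=|A_j|$ and $a_j=|\{|f|>2^j\}|=\sum_{i\geq j}\sigma_i$. Since $|f\chi_{A_j}|\leq2^{j+1}\chi_{A_j}$, the building block gives $\Vert T(f\chi_{A_j})\Vert_Y\preceq2^j\sigma_j\log^\alpha(b/\sigma_j)$. By sublinearity $|Tf|\leq B\sum_j|T(f\chi_{A_j})|$ a.e., so the lattice property together with the logconvexity of $Y$ give, for \emph{any} enumeration $\{j\}=\{\pi(1),\pi(2),\dots\}$,
\[
\Vert Tf\Vert_Y\preceq\sum_{n\geq1}(1+\log n)\,2^{\pi(n)}\sigma_{\pi(n)}\log^\alpha(b/\sigma_{\pi(n)})\,,
\]
whereas, using the quasi-concavity of $\psi_\alpha$ and $\sigma_j\leq a_j$,
\[
\Vert f\Vert_{\Lambda(\psi_\alpha)}=\int_0^1 f^\ast\,d\psi_\alpha\;\succeq\;\sum_j2^j\bigl(\psi_\alpha(a_j)-\psi_\alpha(a_{j+1})\bigr)\;\succeq\;\sum_j2^j\sigma_j\log^\alpha(b/a_j)\log\log\log(b/a_j)\,.
\]

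The hard part --- and the step I expect to be the main obstacle --- is to pass from the first of these sums to the last. A naive choice of $\pi$ is not enough: it only produces estimates whose constant grows (logarithmically) with the number of distinct dyadic levels of $f$. A genuinely uniform bound requires an $f$-adapted organization of the argument: one would group the indices $j$ according to the size of $\log\log(b/\sigma_j)$, use within each group the sharper ``tail'' inequalities $\Vert T(f\chi_{\{|f|>2^k\}})\Vert_Y\leq C(p/(p-1))^\alpha\Vert f\chi_{\{|f|>2^k\}}\Vert_{L(p,1)}$ in place of a term-by-term sum, and then choose the enumeration so that the surviving $1+\log n$ factors are majorized --- summably --- by the $\log\log\log(b/\cdot)$ already present in $d\psi_\alpha$. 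Executing this bookkeeping with the quantitative care it requires is precisely the content of \cite[Theorem~9]{LykovMathSb}, to which I would refer for the details; combined with the reductions above it yields $T\colon\Lambda(\psi_\alpha)\to Y$. The whole difficulty of the theorem --- and the reason $\psi_\alpha$ must carry an \emph{iterated} logarithm rather than a single one --- lies in defeating this logarithmic loss of the superposition step uniformly in $f$, using only the slack gained in the building block step.
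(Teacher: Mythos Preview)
The paper does not give its own proof of this theorem: it is quoted verbatim from \cite[Theorem~9]{LykovMathSb} and immediately followed by open problems, with no argument supplied. So there is no in-paper proof to compare your proposal against.

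As for the proposal itself: your reductions and the building-block estimate on characteristic functions are correct, and the identification of the essential difficulty --- that the logconvex ``triangle inequality'' introduces a factor $1+\log n$ per summand which must be absorbed uniformly in $f$ by the spare $1/\log\log\log(b/\sigma)$ --- is exactly right. However, your proposal does not actually carry out that step; you yourself defer it to \cite[Theorem~9]{LykovMathSb}. Since the entire content of the theorem lives in precisely that bookkeeping (everything up to it is the standard Yano machinery that would already give $T:L(LogL)^\alpha\to Y$ if $Y$ were Banach), what you have written is an accurate roadmap but not a proof. To turn it into one you would need to execute the grouping-and-enumeration argument you sketch: partition the indices $j$ into blocks according to the dyadic size of $\log\log(b/a_j)$, show that within each block the contributions can be summed with a controlled number of terms (so the $1+\log n$ factor stays comparable to the block's $\log\log\log$), and then sum over blocks using the remaining slack. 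That is the substantive work of Lykov's argument.
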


\begin{problem}
To understand Theorem \ref{ExtrLogConvexSub} note that, if $Y$ had been a
Banach space, by the classical extrapolation theorem (cf. Section
\ref{sec:discutido}) we would get that $T$ is bounded, $T:L(LogL)^{\alpha
}\rightarrow Y.$ In other words, the penalty we pay for having only
logconvexity is the extra "triple logarithm" factor that appears in the norm
of $\Lambda(\psi_{\alpha}).$ In this connection we therefore ask if this
result is sharp or if it is possible to enlarge the domain space?
\end{problem}

\subsection{A.e. convergence of Fourier series and extrapolation}

A classical extrapolation due to Carleson--Sj\"{o}lin theorem \cite{Sj-68}
states that if $T$ is a continuous sublinear operator on $L^{p}$ such that for
every measurable set $A\subset\lbrack0,1]$ and all $1<p\leq2,\;t>0$
\[
t\cdot m\{x\in\lbrack0,1]:\,|T\chi_{A}(x)|>t\}^{1/p}\leq C(p-1)^{-1}%
m(A)^{1/p},
\]
with some constant $C>0$ independent of $A$, $p$, and $t$ ($m$ is the Lebesgue
measure), then $T$ maps the space $L(LogL)LogLogL$ into $L(1,\infty)$.
Applying this result to the Carleson maximal operator $M$ (see
\eqref{Carleson}), we immediately get that the Fourier series of each function
from the space $L(LogL)LogLogL$ converges a.e. If $\alpha>0$, and $(p-1)^{-1}$
is replaced with $(p-1)^{-\alpha}$ then an analogous result holds replacing
$L(LogL)LogLogL$ by $L(LogL)^{\alpha}LogLogL$, (cf. \cite[Theorem~5.7.1]{jm}),
but the domain space provided by Theorem \ref%
{ExtrLogConvexSub} is wider than the space $L(LogL)LogLogL$.

\begin{problem}
Do the conditions of the Carleson--Sj\"{o}lin extrapolation theorem imply that
a sublinear operator $T$ maps the larger space $L(LogL)LogLogLogL$ into
$L(1,\infty),$ thus implying a well-known result for Fourier series due to
Antonov \cite{Antonov} ?
\end{problem}

\section{Grand Lebesgue spaces and their versions via
extrapolation\label{sec:grand}}

{Let $1<p<\infty$.} The \textit{Grand Lebesgue $L^{p)}$ space} introduced by
Iwaniec and Sbordone \cite{IwSb}, consists of all measurable functions $f$ on
$[0,1]$ such that
\begin{equation}
{\Vert f\Vert}_{L^{p)}}:=\sup_{0<\varepsilon<p-1}\varepsilon^{\frac
{1}{p-\varepsilon}}{\Vert f\Vert}_{L^{p-\varepsilon}}<\infty. \label{spre}%
\end{equation}
These spaces have found many applications in analysis, including the study of
maximal operators, PDEs, interpolation theory, etc (see \cite{FFG,JSS} and the
references therein). On the other hand, the expression (\ref{spre}) is
somewhat difficult to work with. In this context, Fiorenza-Karadzhov
\cite[Theorem 4.2]{FiorKar} gave the following more explicit description of
the Grand Lebesgue spaces $L^{p)}$ in the terms of the decreasing
rearrangement of the function $f$:
\[
{\Vert f\Vert}_{L^{p)}}\approx\sup_{0<t<1}(\log(e/t))^{-\frac{1}{p}}\left(
\int\limits_{t}^{1}f^{\ast}(s)^{p}\,ds\right)  ^{\frac{1}{p}},
\]
with universal constants of equivalence. The proof given in \cite[Theorem
4.2]{FiorKar} is based on the extrapolation methods of \cite{karami}. A
simpler proof of this result was obtained recently in \cite[Theorem~12]{ALM}
exploiting an extrapolation description of suitable limiting interpolation
spaces. More generally the same method yields that for every $\alpha>0$,%
\[
{\Vert f\Vert}_{L^{p),\alpha}}:=\sup_{0<\varepsilon<p-1}\varepsilon
^{\frac{\alpha}{p-\varepsilon}}{\Vert f\Vert}_{L^{p-\varepsilon}}\approx
\sup_{0<t<1}\log^{-\alpha/p}(e/t)\left(  \int_{t}^{1}(f^{\ast}(s))^{p}%
\,ds\right)  ^{\frac{1}{p}}%
\]
(see \cite[Remark~11]{ALM}), a result obtained for the first time in
\cite[Theorem 1.1 and Theorem 3.1]{FFGKR}.

Let $p>1$ and let $\psi:\,(0,p-1)\rightarrow\lbrack0,\infty)$ be a
nondecreasing function. We shall say that $\psi\in\Delta_{2}$ if $\psi
(2t)\leq\psi(t),$ for small $t$. In \cite[Theorem~1]{FFG} it is shown that the
equivalence
\begin{equation}
\sup_{0<\varepsilon<p-1}\psi(\varepsilon){\Vert f\Vert}_{L^{p-\varepsilon}%
}\approx\sup_{0<t<1}\psi\left(  \frac{p-1}{1-log\,t}\right)  \left(  \int%
_{t}^{1}(f^{\ast}(s))^{p}\,ds\right)  ^{\frac{1}{p}} \label{spre2}%
\end{equation}
holds if and only if $\psi\in\Delta_{2}\cap L^{\infty}$.

\begin{problem}
\label{grand Lebesgue spaces1} Prove equivalence \eqref{spre2} in the case of
non-power functions $\psi\in\Delta_{2}\cap L^{\infty}$, using extrapolation
methods. What is the corresponding equivalence formula when the $\Delta_{2}%
$-assumption does not hold?
\end{problem}

\section{Bilinear Extrapolation: Calder\'{o}n's operator revisited}

\label{bilinear}

The $K/J$ inequalities can be extended to bilinear operators and used to prove
extrapolation theorems following the scheme used to treat the linear case (cf.
\cite{jm2}). We briefly review the story here and present a number of open problems.

We start by recalling that, in the classical setting, the interpolation of
bilinear operators can be effected using $K/J$ inequalities (cf. \cite{jm2}).
In this case starting with weak type inequalities for a bilinear operator $T$
we control the $K-$functional of $K(t,T(f,g))$ via an analog of the
Calder\'{o}n operator that this time is expressed as the multiplicative
convolution of the $K$-functionals of $f$ and $g.$ We recall the details. Let
$\vec{A},\vec{B},\vec{C}$ be pairs of mutually closed spaces and let $T$ be a
bounded bilinear operator $T:$ $\vec{A}\times\vec{B}\rightarrow\vec{C}.$ Then
it is easy to see that there exists $c>0$ such that (cf. \cite{jm2})%
\[
J(rs,T(f,g);\vec{C})\leq cJ(r,f;\vec{A})J(s,g;\vec{B}),\;\;r,s>0.
\]
If we combine this fact with the basic elementary $K/J$ inequality: For any
pair $\vec{X}$, and $f\in\Delta(\vec{X}),$%
\[
K(t,h;\vec{X})\leq\min\{1,\frac{t}{s}\}J(s,h;\vec{X}),
\]
we obtain for $f\in\Delta(\vec{A})$ and $g\in\Delta(\vec{B}),$%
\begin{align}
K(t,T(f,g);\vec{C})  &  \leq\min\{1,\frac{t}{rs}\}J(rs,T(f,g);\vec
{C})\nonumber\\
&  \leq c\min\{1,\frac{t}{rs}\}J(r,f;\vec{A})J(s,g;\vec{B}). \label{bil1}%
\end{align}
Suppose now that $\gamma_{a},\gamma_{b}$ and $\gamma_{c}$ are quasi-concave
functions such that%
\[
\frac{1}{\gamma_{c}(uv)}\leq\frac{1}{\gamma_{a}(u)}\frac{1}{\gamma_{b}%
(v)},\;\;u,v>0
\]
and furthermore suppose that%
\[
1\leq q_{i}\leq\infty,i=1,2,3,\text{ and }\frac{1}{q_{3}}=\frac{1}{q_{1}%
}+\frac{1}{q_{2}}-1.
\]
Then,%
\[
T:\vec{A}_{\gamma_{a}},_{q_{1};K}\times\vec{B}_{\gamma_{b}},_{q_{2}%
;K}\rightarrow\vec{C}_{\gamma_{c}},_{q_{3};K}.
\]
We go over the proof. Represent $f=\int_{0}^{\infty}u_{f}(s)\frac{ds}{s},$ and
$g=\int_{0}^{\infty}u_{g}(s)\frac{ds}{s},$ so that $J(r,u_{f}(r);\vec{A})\leq
K(r,f;\vec{A}),$ and $\int_{0}^{\infty}\min\{1,\frac{t}{s}\}J(s,u_{g}%
(s);\vec{B})\frac{ds}{s}\preceq K(t,g;\vec{B}).$ Now applying the
$K$-functional to the representation
\[
T(f,g)=\int_{0}^{\infty}\int_{0}^{\infty}T(u_{f}(r),u_{g}(s))\frac{dr}{r}%
\frac{ds}{s},
\]
by \eqref{bil1}, yields
\begin{align*}
\frac{K(t,T(f,g);\vec{C})}{\gamma_{c}(t)}  &  \leq\int_{0}^{\infty}\int%
_{0}^{\infty}\frac{K(t,T(u_{f}(r),u_{g}(s));\vec{C})}{\gamma_{c}(t)}\frac
{dr}{r}\frac{ds}{s}\\
&  \leq c\int_{0}^{\infty}\int_{0}^{\infty}\frac{\min\{1,\frac{t}{rs}%
\}}{\gamma_{c}(\frac{t}{r}r)}J(r,u_{f}(r);\vec{A})J(s,u_{g}(s);\vec{B}%
)\frac{dr}{r}\frac{ds}{s}\\
&  \leq c\int_{0}^{\infty}\frac{K(r,f;\vec{A})}{\gamma_{a}(r)}\frac{1}%
{\gamma_{b}(\frac{t}{r})}\int_{0}^{\infty}\min\{1,\frac{t}{rs}\}J(s,u_{g}%
(s);\vec{B})\frac{ds}{s}\frac{dr}{r}\\
&  \preceq\int_{0}^{\infty}\frac{K(r,f;\vec{A})}{\gamma_{a}(r)}\frac
{K(\frac{t}{r},g;\vec{B})}{\gamma_{b}(\frac{t}{r})}\frac{dr}{r}\\
&  =\frac{K(\circ,f;\vec{A})}{\gamma_{a}(\circ)}\blacklozenge\frac
{K(\circ,g;\vec{B})}{\gamma_{b}(\circ)},\text{ where }\blacklozenge\text{ is
convolution w.r. to }(\mathbb{R}^{+},\frac{dt}{t}).
\end{align*}
Consequently, by Young's convolution inequality for the multiplicative group,
there exists a constant $c>0$ such that%
\[
\left\Vert T(f,g)\right\Vert _{\vec{C}_{\gamma_{c}},q_{3};K}\leq c\left\Vert
f\right\Vert _{\vec{A}_{\gamma_{a}},q_{1};K}\left\Vert g\right\Vert _{\vec
{B}_{\gamma_{b}},q_{2};K}.
\]

We now consider the extrapolation case. Let $T$ be a bilinear operator, let
$M\left(  \theta\right)  ,N(\theta)$ be weights such that for all $\theta
\in(0,1),$%
\begin{equation}
T:M(\theta)\vec{A}_{_{\theta,1:J}}^{\blacktriangleleft}\times N(\theta)\vec
{B}_{_{\theta},_{1;J}}^{\blacktriangleleft}\rightarrow\vec{C}_{_{\theta
,\infty;K}}^{\blacktriangleleft},\text{ with norm }1,\text{ for all }\theta
\in(0,1). \label{bil0}%
\end{equation}
Then we have the following bilinear Calder\'{o}n type extrapolation version of
Theorem \ref{teo:ca2} (cf. the discussion that follows it), and Theorem
\ref{teo:ca1}.

\begin{theorem}
\label{teo:local}The following are equivalent:

(i) (\ref{bil0}) holds.

(ii) (bilinear $K/J$ inequality) There exists a constant $c>0,$ such that $T$
satisfies the following (compare with (\ref{bil1}) above): for all $f\in
\Delta(\vec{A}),g\in\Delta(\vec{B}),t,s,h>0,$%
\begin{equation}
K(t,T(f,g);\vec{C})\leq c\tau(\frac{t}{sh})J(s,f;\vec{A})J(h,g;\vec{B})
\label{bil2}%
\end{equation}
where%
\[
\tau(x)=\inf_{0<\theta<1}\{x^{\theta}M(\theta)N(\theta)\}.
\]

(iii) There exists a constant $c>0$ such that%
\[
K(t,T(f,g);\vec{C})\leq c\inf\{\int_{0}^{\infty}\int_{0}^{\infty}\tau(\frac
{t}{sh})J(s,u_{f}(s);\vec{A})J(h,u_{g}(h);\vec{B})\frac{ds}{s}\frac{dh}{h}\}
\]
where the infimum is taken over all the $J-$decompositions of $f=\int%
_{0}^{\infty}u_{f}(s)\frac{ds}{s},$ and $g=\int_{0}^{\infty}u_{g}(h)\frac
{dh}{h}.$
\end{theorem}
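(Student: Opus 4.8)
The plan is to prove the cyclic chain of implications $(i)\Rightarrow(ii)\Rightarrow(iii)\Rightarrow(i)$, exactly mirroring the structure of the linear result Theorem \ref{teo:ca1} but carrying an extra multiplicative parameter throughout.

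\textbf{Step 1: $(i)\Rightarrow(ii)$.} This is the bilinear analogue of the elementary computation displayed before Theorem \ref{teo:ca1}. Fix $\theta\in(0,1)$. Since $\vec{A}_{\theta,1;J}^{\blacktriangleleft}\overset{1}{\subset}\vec{A}_{\theta,1;J}^{\blacktriangleleft}$ trivially and $\vec{C}_{\theta,\infty;K}^{\blacktriangleleft}$ is the target, the assumption \eqref{bil0} gives, for all $f\in\Delta(\vec{A})$, $g\in\Delta(\vec{B})$ and all $t>0$,
\[
K(t,T(f,g);\vec{C})\,t^{-\theta}\le\|T(f,g)\|_{\vec{C}_{\theta,\infty;K}^{\blacktriangleleft}}\le\|f\|_{M(\theta)\vec{A}_{\theta,1;J}^{\blacktriangleleft}}\,\|g\|_{N(\theta)\vec{B}_{\theta,1;J}^{\blacktriangleleft}}.
\]
Now use the elementary estimate $\|f\|_{\vec{A}_{\theta,1;J}^{\blacktriangleleft}}\le s^{-\theta}J(s,f;\vec{A})$ (from \cite[Theorem 3.11.2 (4)]{BL}) for both factors, with parameters $s$ and $h$ respectively, so that the right-hand side is bounded by $M(\theta)N(\theta)\,s^{-\theta}h^{-\theta}J(s,f;\vec{A})J(h,g;\vec{B})$. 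Multiplying through by $t^{\theta}$ and taking the infimum over $\theta\in(0,1)$ yields
\[
K(t,T(f,g);\vec{C})\le\Big(\inf_{0<\theta<1}\big\{\big(\tfrac{t}{sh}\big)^{\theta}M(\theta)N(\theta)\big\}\Big)J(s,f;\vec{A})J(h,g;\vec{B})=\tau\big(\tfrac{t}{sh}\big)J(s,f;\vec{A})J(h,g;\vec{B}),
\]
which is \eqref{bil2} with $c=1$. The one subtlety here is to record that $\tau$ is indeed quasi-concave and vanishes appropriately at $0$ and $\infty$ so that it has a representing measure in the sense of \eqref{analog}; this follows since $\tau$ is an infimum of the linear-in-$\log$ functions $x\mapsto x^{\theta}M(\theta)N(\theta)$.

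\textbf{Step 2: $(ii)\Rightarrow(iii)$.} This is a one-line consequence of sublinearity of the $K$-functional together with the bilinear decomposition $T(f,g)=\int_0^\infty\int_0^\infty T(u_f(s),u_g(h))\frac{ds}{s}\frac{dh}{h}$, which is legitimate for $f\in\Delta(\vec{A})$, $g\in\Delta(\vec{B})$ by the strong form of the fundamental Lemma applied to each variable. Indeed,
\[
K(t,T(f,g);\vec{C})\le\int_0^\infty\int_0^\infty K(t,T(u_f(s),u_g(h));\vec{C})\frac{ds}{s}\frac{dh}{h}\le c\int_0^\infty\int_0^\infty\tau\big(\tfrac{t}{sh}\big)J(s,u_f(s);\vec{A})J(h,u_g(h);\vec{B})\frac{ds}{s}\frac{dh}{h},
\]
using \eqref{bil2} in the last step; taking the infimum over all $J$-decompositions gives (iii).

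\textbf{Step 3: $(iii)\Rightarrow(i)$.} This is where the representing measure of $\tau$ and the temperedness-type hypotheses on $M,N$ enter — and I expect this to be the main obstacle, just as the reverse inequality $N(\theta)\preceq M(\theta)$ was the delicate point in the linear case in Section \ref{sec:J&M}. Write $\tau(x)=\int_0^\infty\min\{1,x/r\}\,d\nu(r)$ via \eqref{analog}. Substituting and using Fubini in the double-$J$-integral of (iii), one recognizes, after choosing the $J$-decompositions $u_g$ so that $\int_0^\infty\min\{1,\frac{t}{rsh}\}J(h,u_g(h);\vec{B})\frac{dh}{h}\approx K(\frac{t}{rs},g;\vec{B})$ (strong fundamental Lemma again), an inner expression comparable to $\int_0^\infty \frac{K(\frac{t}{rs},g;\vec{B})}{(\,\cdot\,)}\cdots$, and iterating the same trick in the $f$-variable produces a bound of the form $\int\int K(\frac{t}{r\sigma},\cdots)\,d\nu(r)\,d(\cdots)$. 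The task is then to peel off, for a fixed $\theta$, the estimate $\|T(f,g)\|_{\vec{C}_{\theta,\infty;K}^{\blacktriangleleft}}\le c\,M(\theta)N(\theta)\|f\|_{\vec{A}_{\theta,1;J}^{\blacktriangleleft}}\|g\|_{\vec{B}_{\theta,1;J}^{\blacktriangleleft}}$: one pairs $K(\frac{t}{r\sigma},T(f,g);\vec{C})$ against $(\frac{t}{r\sigma})^{-\theta}$ inside the supremum defining the $\vec{C}_{\theta,\infty;K}$ norm, pulls $r^{\theta}\sigma^{\theta}$ out against the integrals, and invokes the key fact — the bilinear counterpart of \cite[Lemma 3.9]{jm} — that the "double moment" $\int_0^\infty\int_0^\infty (r\sigma)^{-\theta}\,d\nu_1(r)\,d\nu_2(\sigma)\preceq M(\theta)N(\theta)$ when $M$ and $N$ are tempered. (If the theorem as stated does not explicitly hypothesize temperedness, then this step instead establishes (i) in the form \eqref{bil0} with the weight $M(\theta)N(\theta)$ replaced by the possibly smaller $\int(r\sigma)^{-\theta}d\nu_1 d\nu_2$, and one remarks that under temperedness the two are equivalent, exactly as in Section \ref{sec:J&M}.) Finally, $\vec{A}_{\theta,1;J}^{\blacktriangleleft}=M(\theta)^{-1}\cdot M(\theta)\vec{A}_{\theta,1;J}^{\blacktriangleleft}$ and the analogous identity for $\vec{B}$, together with mutual closedness of $\vec{A},\vec{B},\vec{C}$ to pass from elements of $\Delta$ to the full spaces, recover \eqref{bil0}.

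I would organize the write-up so that Steps 1 and 2 are dispatched quickly (they are the bilinear transcription of arguments already in the paper), and the bulk of the prose goes into Step 3, in particular into verifying the double-moment estimate $\int\int(r\sigma)^{-\theta}\,d\nu_1\,d\nu_2\preceq M(\theta)N(\theta)$ — factoring it as a product of two single-variable moments via the multiplicative structure of $\tau$, then applying \cite[Lemma 3.9]{jm} to each factor. The main obstacle, concretely, is bookkeeping the order of the iterated applications of the strong form of the fundamental Lemma (once in each variable) so that the nested $J$-decompositions are chosen consistently; this is routine but error-prone, and it is the only place where real care is needed.
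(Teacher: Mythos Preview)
Your Steps 1 and 2 match the paper's proof essentially verbatim. The issue is Step 3, where you propose to go through the representing measure of $\tau$, iterated applications of the strong fundamental Lemma, and a ``double-moment'' estimate requiring temperedness of $M$ and $N$. None of this machinery is needed, and in fact temperedness is not a hypothesis of Theorem \ref{teo:local}. The paper's argument for $(iii)\Rightarrow(i)$ is completely elementary: fix $\theta\in(0,1)$, and use the trivial bound $\tau(t/sh)\le(t/sh)^{\theta}M(\theta)N(\theta)$ (immediate from the definition of $\tau$ as an infimum). This factors the double integral in (iii) as
\[
t^{\theta}M(\theta)N(\theta)\int_0^\infty s^{-\theta}J(s,u_f(s);\vec{A})\frac{ds}{s}\cdot\int_0^\infty h^{-\theta}J(h,u_g(h);\vec{B})\frac{dh}{h}.
\]
Now choose, for each $\varepsilon>0$, decompositions $u_f,u_g$ so that the two single integrals are within $(1+\varepsilon)$ of $\|f\|_{\vec{A}_{\theta,1;J}^{\blacktriangleleft}}$ and $\|g\|_{\vec{B}_{\theta,1;J}^{\blacktriangleleft}}$ respectively (this is just the definition of the $J$-norm, no fundamental Lemma required). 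Dividing by $t^{\theta}$, taking $\sup_t$, and letting $\varepsilon\to0$ gives \eqref{bil0}.

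Your proposed route is not wrong in spirit, but it imports the heavier machinery from Section \ref{sec:J&M} that is used to obtain the explicit Calder\'{o}n-operator formula (Theorem \ref{teo:bilg}), not the mere equivalence of (i)--(iii). The moment estimate $\int r^{-\theta}d\nu\preceq M(\theta)N(\theta)$ is precisely what one needs to \emph{reverse} the passage from $K/J$ to the integral representation, but here the reverse direction is free because $\tau$ was \emph{defined} as the infimum. Also note that your ``$d\nu_1\,d\nu_2$'' does not exist in this setting: there is a single representing measure for $\tau$, not a product of two, so the factorization you sketch would need reworking in any case.
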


\begin{proof}
If (i) holds then for all $f\in\Delta(\vec{A}),g\in\Delta(\vec{B}),\theta
\in(0,1),$%
\begin{align*}
K(t,T(f,g),\vec{C})  &  \leq t^{\theta}\left\Vert T(f,g)\right\Vert _{\vec
{C}_{\theta,\infty:K}^{\blacktriangleleft}}\\
&  \leq t^{\theta}M(\theta)\left\Vert f\right\Vert _{\vec{A}_{\theta
,1:J}^{\blacktriangleleft}}N(\theta)\left\Vert g\right\Vert _{\vec{B}%
_{\theta,1;J}^{\blacktriangleleft}}\\
&  \leq t^{\theta}h^{-\theta}s^{-\theta}M(\theta)N(\theta)J(s,f;\vec
{A})J(h,g;\vec{B}).
\end{align*}
Consequently,
\begin{align*}
K(t,T(f,g),\vec{C})  &  \leq\inf_{\theta}\left(  \left(  \frac{t}{sh}\right)
^{\theta}M(\theta)N(\theta))\right)  J(s,f;\vec{A})J(h,g;\vec{B})\\
&  =\tau(\frac{t}{sh})J(s,f;\vec{A})J(h,g;\vec{B}).
\end{align*}

This takes care of the implication (i)$\rightarrow$(ii). The implication
(ii)$\rightarrow$(iii) follows from the triangle inequality. Indeed, suppose
that $f=\int_{0}^{\infty}u_{f}(s)\frac{ds}{s},g=\int_{0}^{\infty}u_{g}%
(h)\frac{dh}{h}$, then%
\begin{align*}
T(f,g)  &  =T(\int_{0}^{\infty}u_{f}(s)\frac{ds}{s},\int_{0}^{\infty}%
u_{g}(h)\frac{dh}{h})\\
&  =\int_{0}^{\infty}\int_{0}^{\infty}T(u_{f}(s),u_{g}(h))\frac{ds}{s}%
\frac{dh}{h}%
\end{align*}
and therefore, applying (\ref{bil2}), we find that for all $t>0,$%
\begin{align}
K(t,T(f,g),\vec{C})  &  \leq\int_{0}^{\infty}\int_{0}^{\infty}K(t,T(u_{f}%
(s),u_{g}(h));\vec{C})\frac{ds}{s}\frac{dh}{h}\nonumber\\
&  \leq\int_{0}^{\infty}\int_{0}^{\infty}\tau(\frac{t}{sh})J(s,u_{f}%
(s);\vec{A})J(h,u_{g}(h);\vec{B})\frac{ds}{s}\frac{dh}{h}. \label{bil8}%
\end{align}
Finally, if (iii) holds then for each $\theta\in(0,1)$ and $\varepsilon>0$ we
can select decompositions $f=\int_{0}^{\infty}u_{f}(s)\frac{ds}{s},g=\int%
_{0}^{\infty}u_{g}(h)\frac{dh}{h},$ such that%
\[
\int_{0}^{\infty}J(s,u_{f}(s);\vec{A})s^{-\theta}\frac{ds}{s}\leq
(1+\varepsilon)\left\Vert f\right\Vert _{\vec{A}_{\theta,1:J}%
^{\blacktriangleleft}},\text{ \ }\int_{0}^{\infty}J(h,u_{g}(h);\vec
{A})h^{-\theta}\frac{ds}{s}\leq(1+\varepsilon)\left\Vert g\right\Vert
_{\vec{B}_{\theta,1;J}^{\blacktriangleleft}}.
\]
Moreover, since by definition, for each $\theta\in(0,1),t,s,h>0$
\[
\tau(\frac{t}{sh})\leq M(\theta)N(\theta)\frac{t^{\theta}}{s^{\theta}%
h^{\theta}},
\]
it follows that%
\begin{align*}
&  \int_{0}^{\infty}\int_{0}^{\infty}\tau(\frac{t}{sh})J(s,u_{f}(s);\vec
{A})J(h,u_{g}(h);\vec{B})\frac{ds}{s}\frac{dh}{h}\\
&  \leq t^{\theta}M(\theta)\int_{0}^{\infty}J(s,u_{f}(s);\vec{A})s^{-\theta
}\frac{ds}{s}N(\theta)\int_{0}^{\infty}J(h,u_{g}(h);\vec{B})h^{-\theta}%
\frac{dh}{h}\\
&  \leq(1+\varepsilon)^{2}t^{\theta}M(\theta)\left\Vert f\right\Vert _{\vec
{A}_{\theta,1:J}^{\blacktriangleleft}}N(\theta)\left\Vert g\right\Vert
_{\vec{B}_{\theta,1;J}^{\blacktriangleleft}}%
\end{align*}
Combining this inequality with \eqref{bil8}, and letting $\varepsilon
\rightarrow0,$ we get%
\[
\left\Vert T(f,g)\right\Vert _{\vec{C}_{\theta,\infty:K}^{\blacktriangleleft}%
}=\sup_{t>0}K(t,T(f,g),\vec{C})t^{-\theta}\leq M(\theta)\left\Vert
f\right\Vert _{\vec{A}_{\theta,1:J}^{\blacktriangleleft}}N(\theta)\left\Vert
g\right\Vert _{\vec{B}_{\theta,1;J}^{\blacktriangleleft}}.
\]

\end{proof}

Extrapolation theorems follow from assumptions about $\tau.$ In particular, to
the assumptions of the previous theorem we add the following property that is
particularly suitable for our method.

\begin{definition}
Let us say that a concave function $\tau:(0,\infty)\rightarrow R_{+},$ is
adequate if there exists a measure $\nu$ on $(0,\infty)$ \ such that $\tau$
can be represented \ by%
\begin{equation}
\tau(t)=\int_{0}^{\infty}\int_{0}^{\infty}\min(1,\frac{t}{n})\min(1,\frac
{n}{r})d\nu(r)\frac{dn}{n},t>0. \label{bil9}%
\end{equation}

\end{definition}

\begin{theorem}
\label{teo:bilg}Let $\vec{A},\vec{B},\vec{C},$ be mutually closed Banach pairs
and let $T$ be a bilinear operator such that (\ref{bil0}) holds where,
moreover, $M,$ $N$ are weights such that the function%
\[
\tau(t)=\inf_{0<\theta<1}\{t^{\theta}M(\theta)N(\theta)\}.
\]
is adequate. Then,%
\[
K(t,T(f,g);\vec{C})\leq c\int_{0}^{\infty}\int_{0}^{\infty}K(\frac{t}%
{u},f;\vec{A})K(\frac{u}{r},g;\vec{B})d\nu(r)\frac{du}{u}.
\]

\end{theorem}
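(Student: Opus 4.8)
The plan is to chain the bilinear $K/J$ inequality of Theorem \ref{teo:local} together with the adequacy representation (\ref{bil9}) of $\tau$ and a Tonelli argument, exactly mirroring the passage from $K/J$ inequalities to Calder\'on-operator inequalities in the linear case (the implication (\ref{k/j5})$\Rightarrow$(\ref{k/j4}) in Theorem \ref{teo:ca1}), but now with the two-variable representation of $\tau$ in place of (\ref{analog}). First I would note that there is nothing to prove unless $f\in A_0+A_1$ and $g\in B_0+B_1$, so I may assume this. Then I would invoke the equivalence (i)$\Leftrightarrow$(iii) of Theorem \ref{teo:local}: since (\ref{bil0}) holds,
\[
K(t,T(f,g);\vec{C})\leq c\int_0^\infty\int_0^\infty\tau\!\left(\frac{t}{sh}\right)J(s,u_f(s);\vec{A})\,J(h,u_g(h);\vec{B})\,\frac{ds}{s}\,\frac{dh}{h}
\]
for any $J$-representations $f=\int_0^\infty u_f(s)\frac{ds}{s}$ and $g=\int_0^\infty u_g(h)\frac{dh}{h}$.

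Next I would fix the representations to be the \emph{special} ones furnished by the strong form of the fundamental Lemma (cf.\ Section \ref{sec:sfl}): decompositions $u_f,u_g$ such that, with constants independent of $f$ and $g$,
\[
\int_0^\infty\min\!\left(1,\frac{x}{s}\right)J(s,u_f(s);\vec{A})\,\frac{ds}{s}\preceq K(x,f;\vec{A}),\qquad \int_0^\infty\min\!\left(1,\frac{x}{h}\right)J(h,u_g(h);\vec{B})\,\frac{dh}{h}\preceq K(x,g;\vec{B})
\]
for all $x>0$ simultaneously. I would then substitute the adequacy representation $\tau(t/sh)=\int_0^\infty\int_0^\infty\min(1,\frac{t}{shn})\min(1,\frac{n}{r})\,d\nu(r)\frac{dn}{n}$ into the bound above, and apply Tonelli (all integrands are nonnegative) to bring the $s$-integration innermost. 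Integrating first in $s$, the first fundamental-Lemma estimate with $x=t/(hn)$ gives
\[
K(t,T(f,g);\vec{C})\preceq\int_0^\infty\int_0^\infty\int_0^\infty K\!\left(\frac{t}{hn},f;\vec{A}\right)\min\!\left(1,\frac{n}{r}\right)J(h,u_g(h);\vec{B})\,d\nu(r)\,\frac{dn}{n}\,\frac{dh}{h}.
\]

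The change of variable $u=hn$ (with $h$ fixed, so $\frac{dn}{n}=\frac{du}{u}$ and $\min(1,\frac{n}{r})=\min(1,\frac{u}{hr})$), a further application of Tonelli to move the $h$-integration innermost, and the second fundamental-Lemma estimate with $x=u/r$ then yield
\[
K(t,T(f,g);\vec{C})\preceq\int_0^\infty\int_0^\infty K\!\left(\frac{t}{u},f;\vec{A}\right)K\!\left(\frac{u}{r},g;\vec{B}\right)d\nu(r)\,\frac{du}{u},
\]
which is the claim. I expect the only delicate points to be routine: checking joint measurability of the integrands so that Tonelli genuinely applies (standard for the $K$- and $J$-functionals of the real method, but worth a remark), and the bookkeeping in the substitution $u=hn$, making sure the representing measure $\nu$ is carried along untouched. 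The genuine content is entirely upstream — the bilinear $K/J$ inequality of Theorem \ref{teo:local} and the strong form of the fundamental Lemma — so that the present theorem is simply the bilinear analogue of the reduction already carried out in Section \ref{sec:J&M} and Theorem \ref{teo:ca1}.
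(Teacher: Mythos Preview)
Your proposal is correct and follows essentially the same route as the paper: select the special $J$-decompositions of $f$ and $g$ provided by the strong form of the fundamental Lemma, apply the bilinear $K/J$ bound (\ref{bil8}) from Theorem \ref{teo:local}, expand $\tau(t/sh)$ via the adequacy representation (\ref{bil9}), integrate first in $s$ (giving $K(t/(nh),f;\vec{A})$), substitute $u=nh$, and then integrate in $h$ (giving $K(u/r,g;\vec{B})$). The paper presents these steps in the same order and with the same substitutions; your remarks about Tonelli and the bookkeeping in the change of variable are apt but, as you say, routine.
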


\begin{proof}
Using the strong form of fundamental Lemma we select $J-$decompositions of
$f=\int_{0}^{\infty}$ $u_{f}(s)\frac{ds}{s}$ and $g=\int_{0}^{\infty}$
$u_{g}(h)\frac{dh}{h}$ such that%
\[
\int_{0}^{\infty}\min(1,\frac{t}{s})J(s,u_{f}(s);\vec{A})\frac{ds}{s}\preceq
K(t,f;\vec{A});\text{ }\int_{0}^{\infty}\min(1,\frac{t}{h})J(h,u_{g}%
(h);\vec{B})\frac{dh}{h}\preceq K(t,g;\vec{B}).
\]
In view of Theorem \ref{teo:local} we know that (\ref{bil8}) above holds for
the above decompositions. To estimate the resulting right hand side of
(\ref{bil8}) we use the fact that $\tau$ is adequate. Then, we can estimate
$K(t,T(f,g),\vec{C})$ by%
\begin{align*}
&  \int_{0}^{\infty}\int_{0}^{\infty}\tau(\frac{t}{sh})J(s,u_{f}(s);\vec
{A})J(h,u_{g}(h);\vec{B})\frac{ds}{s}\frac{dh}{h}\\
&  =\int_{0}^{\infty}\int_{0}^{\infty}\int_{0}^{\infty}\int_{0}^{\infty}%
\min(1,\frac{t}{nsh})\min(1,\frac{n}{r})J(s,u_{f}(s);\vec{A})J(h,u_{g}%
(h);\vec{B})\frac{ds}{s}\frac{dh}{h}d\nu(r)\frac{dn}{n}\\
&  =\int_{0}^{\infty}\int_{0}^{\infty}\int_{0}^{\infty}\min(1,\frac{n}%
{r})J(h,u_{g}(h);\vec{B})\left(  \int_{0}^{\infty}\min(1,\frac{t}%
{nsh})J(s,u_{f}(s);\vec{A})\frac{ds}{s}\right)  \frac{dh}{h}d\nu(r)\frac
{dn}{n}\\
&  \preceq\int_{0}^{\infty}\int_{0}^{\infty}J(h,u_{g}(h);\vec{B})\int%
_{0}^{\infty}\min(1,\frac{n}{r})K(\frac{t}{nh},f;\vec{A})\frac{dn}{n}\frac
{dh}{h}d\nu(r)\\
&  =\int_{0}^{\infty}\int_{0}^{\infty}\int_{0}^{\infty}J(h,u_{g}(h);\vec
{B})\min(1,\frac{u}{rh})K(\frac{t}{u},f;\vec{A})\frac{du}{u}\frac{dh}{h}%
d\nu(r)\text{ \ \ \ \ (let\ }u=nh)\\
&  =\int_{0}^{\infty}\int_{0}^{\infty}\left(  \int_{0}^{\infty}J(h,u_{g}%
(h);\vec{B})\min(1,\frac{u}{rh})\frac{dh}{h}\right)  K(\frac{t}{u},f;\vec
{A})\frac{du}{u}d\nu(r)\\
&  \preceq\int_{0}^{\infty}\int_{0}^{\infty}K(\frac{t}{u},f;\vec{A})K(\frac
{u}{r},g;\vec{B})\frac{du}{u}d\nu(r),
\end{align*}
Thus,%
\[
K(t,T(f,g),\vec{C})\preceq\int_{0}^{\infty}\int_{0}^{\infty}K(\frac{t}%
{u},f;\vec{A})K(\frac{u}{r},g;\vec{B})\frac{du}{u}d\nu(r),
\]
as we wished to show.
\end{proof}

\begin{example}
\label{exa:becomes3} Suppose that
\[
M(\theta)N(\theta)\approx\theta^{-1}(1-\theta)^{-1}%
\]
and let
\[
\tau(t)=\inf_{\theta}\{t^{\theta}M(\theta)N(\theta)\}.
\]
Then, by direct computation we see that (cf. \cite[page 50]{jm}, Example
\ref{ex:yano} for similar computations)
\begin{align*}
\int_{0}^{\infty}\min(1,\frac{t}{n})\min(1,n)\frac{dn}{n}  &  \approx\left\{
\begin{array}
[c]{cc}%
2t+t\log\frac{1}{t} & 0<t<1\\
2+\log t & t>1
\end{array}
\right. \\
&  \approx\inf_{\theta}\{t^{\theta}\theta^{-1}(1-\theta)^{-1}\}\\
&  =\tau(t).
\end{align*}
Consequently, (\ref{bil9}) holds if we select $\nu$ to be the delta-function
at $1,$ $\nu=\delta_{1}.$ Indeed,%
\begin{align*}
\int_{0}^{\infty}\int_{0}^{\infty}\min(1,\frac{t}{n})\min(1,\frac{n}{r}%
)\frac{dn}{n}d\nu(r)  &  =<\int_{0}^{\infty}\int_{0}^{\infty}\min(1,\frac
{t}{n})\min(1,\frac{n}{r})\frac{dn}{n}d\delta_{r=1}(r)>\\
&  =\int_{0}^{\infty}\min(1,\frac{t}{n})\min(1,n)\frac{dn}{n}.
\end{align*}

\end{example}

\begin{example}
\label{ex:lagbi}(Yano's bilinear extrapolation) Let $\vec{A},\vec{B},\vec{C},$
be mutually closed ordered (with constant $1$) Banach pairs, moreover, let
$M,$ $N$ and $\tau$ be as in the previous Example. Suppose that $T$ is a
bilinear operator such that (\ref{bil0}) holds. Then

(i)%
\[
T:\langle\vec{A}\rangle_{0,1;K}\times\langle\vec{B}\rangle_{0,1;K}\rightarrow
C_{0}.
\]

(ii)%
\[
T:A_{1}\times B_{1}\rightarrow Exp(\vec{C})=\{f:\left\Vert f\right\Vert
_{Exp(\vec{C})}=\sup_{0<t<1}\frac{K(t,f;\vec{C})}{t(1+\log\frac{1}{t})}%
<\infty\}.
\]

\end{example}

\begin{proof}
(i) By Theorem \ref{teo:bilg} and the previous example%
\begin{align*}
K(t,T(f,g),\vec{C})  &  \preceq\int_{0}^{\infty}\int_{0}^{\infty}K(\frac{t}%
{u},f;\vec{A})K(\frac{u}{r},g;\vec{B})d\nu(r)\frac{du}{u}\\
&  =\int_{0}^{\infty}K(\frac{t}{u},f;\vec{A})K(u,g;\vec{B})\frac{du}{u}.
\end{align*}
Letting $t=1,$we get
\begin{align*}
\left\Vert T(f,g)\right\Vert _{C_{0}}  &  =K(1,T(f,g),\vec{C})\\
&  \preceq\int_{0}^{\infty}K(\frac{1}{u},f;\vec{A})K(u,g;\vec{B})\frac{du}%
{u}\\
&  =\int_{0}^{1}K(\frac{1}{u},f;\vec{A})K(u,g;\vec{B})\frac{du}{u}+\int%
_{1}^{\infty}K(\frac{1}{u},f;\vec{A})K(u,g;\vec{B})\frac{du}{u}.
\end{align*}
Now,%
\[
K(\frac{1}{u},f;\vec{A})\chi_{(0,1)}(u)\leq\left\Vert f\right\Vert _{A_{0}%
},\text{ while \ }K(u,g;\vec{B})\chi_{(1,\infty)}(u)\leq\left\Vert
g\right\Vert _{B_{0}}.
\]
Consequently,%
\begin{align*}
\left\Vert T(f,g)\right\Vert _{C_{0}}  &  \preceq\left\Vert f\right\Vert
_{A_{0}}\int_{0}^{1}K(u,g;\vec{B})\frac{du}{u}+\left\Vert g\right\Vert
_{B_{0}}\int_{1}^{\infty}K(\frac{1}{u},f;\vec{A})\frac{du}{u}\\
&  \preceq\left\Vert f\right\Vert _{A_{0}}\left\Vert g\right\Vert
_{\langle\vec{B}\rangle_{0,1;K}}+\left\Vert g\right\Vert _{B_{0}}\left\Vert
f\right\Vert _{\langle\vec{A}\rangle_{0,1;K}}\\
&  \preceq\left\Vert f\right\Vert _{\langle\vec{A}\rangle_{0,1;K}}\left\Vert
g\right\Vert _{\langle\vec{B}\rangle_{0,1;K}},
\end{align*}
as we wished to show.

(ii) For $t\in(0,1)$ we can write%
\begin{align*}
K(t,T(f,g),\vec{C})  &  \preceq\int_{0}^{\infty}K(\frac{t}{u},f;\vec
{A})K(u,g;\vec{B})\frac{du}{u}\\
&  \leq\int_{0}^{t}....\frac{du}{u}+\int_{t}^{1}...\frac{du}{u}+\int%
_{1}^{\infty}...\frac{du}{u}\\
&  =(I)+(II)+(III).
\end{align*}
We estimate the terms on the right hand side as follows,%

\begin{align*}
(I)  &  \leq\int_{0}^{t}K(\frac{t}{u},f;\vec{A})K(u,g;\vec{B})\frac{du}{u}\\
&  \leq\left\Vert f\right\Vert _{A_{0}}\int_{0}^{t}\left\Vert g\right\Vert
_{B_{1}}du\\
&  \leq t\left\Vert f\right\Vert _{A_{0}}\left\Vert g\right\Vert _{B_{1}}.
\end{align*}
Likewise,%
\begin{align*}
(II)  &  =\int_{t}^{1}K(\frac{t}{u},f;\vec{A})K(u,g;\vec{B})\frac{du}{u}\\
&  \leq\left\Vert f\right\Vert _{A_{1}}\int_{t}^{1}\frac{t}{u}K(u,g;\vec
{B})\frac{du}{u}\\
&  \leq\left\Vert f\right\Vert _{A_{1}}\left\Vert g\right\Vert _{A_{1}}%
t\int_{t}^{1}\frac{du}{u}\\
&  \leq\left\Vert f\right\Vert _{A_{1}}\left\Vert g\right\Vert _{A_{1}}%
t\log\frac{1}{t},
\end{align*}

\begin{align*}
(III)  &  =\int_{1}^{\infty}K(\frac{t}{u},f;\vec{A})K(u,g;\vec{B})\frac{du}%
{u}\\
&  \leq\left\Vert f\right\Vert _{A_{1}}\left\Vert g\right\Vert _{A_{0}}%
t\int_{1}^{\infty}\frac{du}{u^{2}}\\
&  =\left\Vert f\right\Vert _{A_{1}}\left\Vert g\right\Vert _{A_{0}}t.
\end{align*}
Consequently,%
\begin{align*}
\frac{K(t,T(f,g),\vec{C})}{t(1+\log\frac{1}{t})}  &  \preceq\frac{t\left\Vert
f\right\Vert _{A_{0}}\left\Vert g\right\Vert _{B_{0}}}{t(1+\log\frac{1}{t}%
)}+\frac{\left\Vert f\right\Vert _{A_{1}}\left\Vert g\right\Vert _{A_{1}}%
t\log\frac{1}{t}}{t(1+\log\frac{1}{t})}+\frac{\left\Vert f\right\Vert _{A_{1}%
}\left\Vert g\right\Vert _{A_{0}}t}{t(1+\log\frac{1}{t})}\\
&  \preceq\left\Vert f\right\Vert _{A_{1}}\left\Vert g\right\Vert _{B_{1}}.
\end{align*}

\end{proof}

\begin{example}
For a finite measure space we let $\vec{A}=\vec{B}=\vec{C}=(L^{1},L^{\infty
}).$ Then, if $T:L(p,1)\times$ $L(p,1)\rightarrow L^{1},$ with norm $\sim
\frac{p}{p-1},1<p<\infty,$ then

(i)%
\[
T:L(\log L)\times L(\log L)\rightarrow L^{1}.
\]

(ii)%
\[
T:L^{\infty}\times L^{\infty}\rightarrow e^{L}.
\]

\end{example}

\begin{proof}
(i) Follows from the previous Example if we recall that $\langle
L^{1},L^{\infty}\rangle_{0,1;K}=L(\log L).$

(ii) Since $K(t,f;L^{1},L^{\infty})=tf^{\ast\ast}(t),$ we have%
\[
\left\Vert f\right\Vert _{Exp(L^{1},L^{\infty})}=\sup_{0<t<1}\frac
{K(t,f;\vec{C})}{t(1+\log\frac{1}{t})}=\sup_{0<t<1}\frac{f^{\ast\ast}%
(t)}{(1+\log\frac{1}{t})}%
\]
and therefore (cf. \cite{jm})%
\[
Exp(L^{1},L^{\infty})=e^{L}.
\]

\end{proof}

\begin{remark}
Let us note that part (ii) of Example \ref{ex:lagbi} can be obtained directly
by linear extrapolation. Indeed, freezing one variable, say by letting $f\in
A_{1}$ be fixed, then we have a linear operator $T_{f}:=T(f,\circ):$%
\[
T_{f}:M(\theta)N(\theta)\vec{B}_{_{\theta},_{1;J}}^{\blacktriangleleft
}\rightarrow\vec{C}_{_{\theta,\infty;K}}^{\blacktriangleleft}%
\]
yielding%
\[
\left\Vert T_{f}(g)\right\Vert _{Exp(\vec{C})}\preceq\left\Vert f\right\Vert
_{A_{1}}\left\Vert g\right\Vert _{B_{1}}.
\]

\end{remark}

\begin{example}
The case where the norm decays as $\sim\left(  \frac{p}{p-1}\right)  ^{\alpha
},\alpha>1,$ was treated in \cite[Theorem 4.10]{jm2}.
\end{example}

\begin{problem}
Develop the corresponding extrapolation results for the general case. That is
suppose now that $\{\rho_{\theta,a}\}_{\theta\in(0,1)},\{\rho_{\theta
,b}\}_{\theta\in(0,1)},\{\rho_{\theta,c}\}_{\theta\in(0,1)}$ are families of
quasi-concave functions, let $M(\theta),N(\theta)$ be weights, and let $T$ be
a bilinear operator such that for all $\theta\in(0,1),$%
\[
T:M(\theta)\vec{A}_{\rho_{\theta,a}}^{\blacktriangleleft},_{1;J}\times
N(\theta)\vec{B}_{\rho_{\theta,b}}^{\blacktriangleleft},_{1;J}\rightarrow
\vec{C}_{\rho_{\theta,c}}^{\blacktriangleleft},_{\infty;K},\text{ with norm
}1,\theta\in(0,1).
\]
Then the following $K/J$ inequality holds (compare with (\ref{bil2})): for all
$t,u,s>0,f\in\Delta(\vec{A}),g\in\Delta(\vec{B}),$%
\begin{equation}
K(t,T(f,g);\vec{C})\leq J(s,f;\vec{A})J(u,g;\vec{B})\inf_{\theta}\{\frac
{\rho_{\theta,c}(t)}{\rho_{\theta,a}(s)\rho_{\theta,b}(u)}M(\theta
)N(\theta)\},. \label{desi1}%
\end{equation}
Develop the corresponding $K/J$ inequalities and extrapolation results (cf.
\cite{jm2}).
\end{problem}

\begin{problem}
(Open ended) Using Theorem \ref{teo:local} and the results of the previous
Problem develop a complete theory of bilinear extrapolation.
\end{problem}

\begin{problem}
Part of the difficulties of dealing with extrapolation of bilinear operators
lies with the theory of representation of concave functions of two variables.
We ask for a representation formula for concave functions of two variables.
\end{problem}

\begin{problem}
Is it possible to eliminate the restriction for $\tau$ to be adequate (cf.
(\ref{bil9}))?
\end{problem}

\begin{problem}
Arrange conditions on the function $\tau$ so that to be able to control the
decay in each component separately and in this fashion prove bilinear results
where the resulting spaces are different in each component.
\end{problem}

\begin{problem}
In view of Example \ref{ex:lag}, Theorem \ref{teo:local} and Example
\ref{ex:lagbi} one may extend the definition of weak type $(1,1),(\infty
,\infty)$ of \cite{devr} to the bilinear case by demanding that a bilinear
operator satisfies%
\begin{equation}
K(t,T(f,g),\vec{C})\preceq\int_{0}^{\infty}K(\frac{t}{u},f;\vec{A}%
)K(u,g;\vec{B})\frac{du}{u}. \label{devr1}%
\end{equation}
For example, one can then ask for an extension to bilinear operators of the
result in \cite{bedesa}: Do bilinear mappings such that $T:L^{1}\times
L^{1}\rightarrow L(1,1),$ and $L^{\infty}\times L^{\infty}\rightarrow
L(\infty,\infty)$ satisfy (\ref{devr1})?
\end{problem}

\section{Converse to Yano's theorem: Tao's Theorem\label{sec:Tao}}

It is natural to ask if one can prove a converse to Yano's theorem. In other
words, suppose that an operator $T$ is bounded, $T:L(LogL)^{\alpha
}(0,1)\rightarrow L^{1}(0,1),$ then we ask: does there exist a constant $c$
such that $\left\Vert T\right\Vert _{L^{p}\rightarrow L^{p}}\leq\frac
{c}{(p-1)^{\alpha}},$ as $p\rightarrow1?$

It is well known and easy to see that, for general operators, the answer is
negative. and therefore one needs to impose more assumptions on the operators.
A positive result in this direction was obtained by Tao \cite{tao}, who
considers translation invariant operators. More precisely, Tao considered
translation invariant operators $T$ defined on compact symmetric spaces $X$
that are provided with a compact symmetry group $G.$ In this context Tao
\cite{tao} shows that%
\begin{equation}
\left\Vert T\right\Vert _{L^{p}(X)\rightarrow L^{p}(X)}\leq\frac
{c}{(p-1)^{\alpha}},1<p<2\Leftrightarrow T:L(LogL)^{\alpha}(X)\rightarrow
L^{1}(X). \label{tao}%
\end{equation}
So we ask

\begin{problem}
What other types of conditions guarantee the validity of a Yano-Tao theorem?
In particular, we ask this question in the context of non-commutative $L^{p}$ spaces.
\end{problem}

\subsection{Multiplier Problem II: Equivalence of $K-$functional inequalities}

In extrapolation, constants can create what, at first, may appear to be
unexpected effects. The situation that we shall describe now already appears
in some form or other in previous discussions but here we shall consider only
a typical concrete example connected with the extrapolation of $L^{p}%
:=L^{p}(0,1)$ spaces. Let us consider the \textquotedblleft Yano
situation\textquotedblright:%
\begin{equation}
T:\{\frac{p}{p-1}L^{p}\}_{p>1}\rightarrow\{L^{p}\}_{p>1}, \label{mul1}%
\end{equation}
which, as we know, yields, via the $\sum-$method:%
\begin{equation}
T:L\log L=\sum\limits_{p>1}\{\frac{p}{p-1}L^{p}\}_{p>1}\rightarrow\sum
_{p>1}\{L^{p}\}_{p>1}=L^{1}. \label{mul1'}%
\end{equation}
Now, multiplying the underlying inequalities of $($\ref{mul1}$)$ by
$\{\frac{p}{p-1}\}$ yields%
\begin{equation}
T:\{\left(  \frac{p}{p-1}\right)  ^{2}L^{p}\}_{p>1}\rightarrow\{\left(
\frac{p}{p-1}\right)  L^{p}\}_{p>1} \label{mul2}%
\end{equation}
which, once again by the $\sum-$method, gives%
\begin{equation}
T:L(\log L)^{2}=\sum\limits_{p>1}\{\left(  \frac{p}{p-1}\right)  ^{2}%
L^{p}\}_{p>1}\rightarrow\sum_{p>1}\{\left(  \frac{p}{p-1}\right)
L^{p}\}_{p>1}=LLogL. \label{mul3}%
\end{equation}
Conversely, starting from (\ref{mul2}) we can, by multiplication by
$\{\frac{p-1}{p}\}$ return (\ref{mul1}). On the other hand, once we have
applied the $\sum$-functor we cannot claim the equivalence between the pair of
estimates $\{$(\ref{mul1}) and (\ref{mul1'})$\}$ or the equivalence between
$\{$(\ref{mul2}) and (\ref{mul3})$\},$ unless we have extra assumptions (e.g.
Tao's theorem \ (cf. Section \ref{sec:Tao}))$.$ On the other hand, if we apply
the corresponding $K-$functionals for scales, we know that (\ref{mul1}) is
equivalent to (cf. (\ref{validity}) and (\ref{char2}))%
\begin{equation}
K(t,Tf;\{L^{p}\}_{p>1})\leq cK(t,f;\{\frac{p}{p-1}L^{p}\}_{p>1}), \label{lk1}%
\end{equation}
and likewise (\ref{mul2}) is equivalent to%
\begin{equation}
K(t,Tf;\{\frac{p}{p-1}L^{p}\}_{p>1})\leq cK(t,f;\{\left(  \frac{p}%
{p-1}\right)  ^{2}L^{p}\}_{p>1}). \label{lk2}%
\end{equation}
Therefore, since (\ref{mul1}) and (\ref{mul2}) are equivalent we see that
(\ref{lk1}) and (\ref{lk2}) are equivalent. These $K-$functional estimates can
be made explicit, (cf. Examples \ref{ex:yano}, \ref{ex:enuno},\ref{ex:lag})%
\begin{align*}
K(t,Tf;\{L^{p}\}_{p>1})  &  \approx K(t,Tf;\{(L^{1},L^{\infty})_{1/p^{\prime
},\infty;K}^{\blacktriangleleft}\}_{p>1})\\
&  \approx K(t,Tf;L^{1},L^{\infty})=\int_{0}^{t}(Tf)^{\ast}(s)ds,
\end{align*}%
\begin{align*}
K(t,f;\{\frac{p}{p-1}L^{p}\}_{p>1})  &  \approx K(t,f;\{\frac{p}{p-1}%
(L^{1},L^{\infty})_{1/p^{\prime},1;J}^{\blacktriangleleft}\}_{p>1})\\
&  \approx\int_{0}^{t}f^{\ast}(s)\log\frac{t}{s}ds.
\end{align*}
Likewise,%
\begin{align*}
K(t,f;\{\left(  \frac{p}{p-1}\right)  ^{2}L^{p}\}_{p>1})  &  \approx
K(t,f;\{\left(  \frac{p}{p-1}\right)  ^{2}(L^{1},L^{\infty})_{1/p^{\prime
},1;J}^{\blacktriangleleft}\}_{p>1})\\
&  \approx\int_{0}^{t}f^{\ast}(s)(\log\frac{t}{s})^{2}ds.
\end{align*}
So, we get the equivalence of the rearrangement inequalities
\begin{equation}
\int_{0}^{t}(Tf)^{\ast}(s)ds\preceq\int_{0}^{t}f^{\ast}(s)\log\frac{t}%
{s}ds\;\text{ and }\;\int_{0}^{t}(Tf)^{\ast}(s)\log\frac{t}{s}ds\preceq
\int_{0}^{t}f^{\ast}(s)(\log\frac{t}{s})^{2}ds. \label{lk3}%
\end{equation}
Usually in the classical papers such results are described as a *gain* or
*loss* of logarithms.

Here is another example that comes from classical interpolation. Consider
informally the ultra classical situation (Calder\'{o}n's Theorem): Let $T$ be
a linear operator such that $T:L^{1}\rightarrow L^{1},$ and $T:$ $L^{\infty
}\rightarrow L^{\infty}$. These conditions are equivalent to%
\[
K(t,Tf;L^{1},L^{\infty})\preceq K(t,f;L^{1},L^{\infty})
\]
which yields%
\begin{equation}
\int_{0}^{t}(Tf)^{\ast}(s)ds\preceq\int_{0}^{t}f^{\ast}(s)ds. \label{lk4}%
\end{equation}
Comparing with (\ref{lk3}) shows the lack of logarithms on the right hand
side, reflecting that the corresponding deterioration *weight* is a constant:
$M(p)\approx1.$ Now from (\ref{lk4}) we see that, by the Calder\'{o}n-Mityagin
theorem, with constants independent of $p$%
\[
T:L^{p}\rightarrow L^{p},\;p\geq1.
\]
In particular, multiplication by $\{\frac{p}{p-1}\}$ yields%
\[
T:\{\frac{p}{p-1}L^{p}\}_{p>1}\rightarrow\{\frac{p}{p-1}L^{p}\}_{p>1},
\]
which as we have seen is equivalent\footnote{This equivalence in principle is
valid for functions in $L^{1}\cap L^{\infty}.$} now to%
\[
\int_{0}^{t}(Tf)^{\ast}(s)\log\frac{t}{s}ds\preceq\int_{0}^{t}f^{\ast}%
(s)\log\frac{t}{s}ds.
\]
So in this case there is no *gain* of logarithm, as indeed it should be, since
the assumption that $T:L^{1}\rightarrow L^{1}$ and $T:$ $L^{\infty}\rightarrow
L^{\infty}$ is essentially stronger. We also note that once explicit
inequalities are written down they can be proved by more direct methods. This
is certainly the case of rearrangement inequalities.

\begin{problem}
Give a direct proof of the equivalence of rearrangement inequalities
(\ref{lk3}).
\end{problem}

\begin{problem}
Let $T$ be a bounded operator, $T:\,L^{p}[0,1]\rightarrow L^{p}[0,1]$, $%
1<p<\infty ,$ and let $\Phi (p)=\left\Vert T\right\Vert _{p\rightarrow p}.$
Characterize in terms of $\Phi $ the functions $\varphi :\,[1,\infty
)\rightarrow \lbrack 0,\infty )$, $\varphi (1)=0,$ that make the
inequalities 
\begin{equation*}
\int_{0}^{t}(Tf)^{\ast }(s)ds\preceq \int_{0}^{t}f^{\ast }(s)\varphi ({t}/{s}%
)ds,\;\;0<t\leq 1,
\end{equation*}
\begin{equation*}
\int_{0}^{t}(Tf)^{\ast }(s)\varphi ({t}/{s})ds\preceq \int_{0}^{t}f^{\ast
}(s)(\varphi ({t}/{s}))^{2}ds,\;\;0<t\leq 1,
\end{equation*}%
equivalent for all $f\in L^{\infty }[0,1]$.  
\end{problem}

\section{Non-Commutative Calder\'{o}n Operator and Extrapolation
\label{sec: non-comm Yano}}

Let $\mathcal{N}$ be a semifinite von Neumann algebra on a Hilbert space
$\mathcal{H}$ equipped with a faithful normal semifinite trace $\tau$,
${{\mathfrak{S}}}^{p}(\mathcal{N})$ be the corresponding Schatten-von Neumann
classes, $\mathcal{M}^{1}(\mathcal{N})$ be the Matsaev ideal, $\mu=\mu(t,A)$
be the $^{\ast}$-operation in the non-commutative setting (see for the
definitions \cite{LSZ-13} and Example~\ref{non-commutative}). Moreover, let
$S$ be the Calder\'{o}n operator, defined by
\begin{equation}
Sf(t):=\frac{1}{t}\int_{0}^{t}f(s)\,ds+\int_{t}^{\infty}\frac{f(s)}%
{s}\,ds,\;\;t>0. \label{extrapola0}%
\end{equation}

Let $\mathcal{T}:\,{{\mathfrak{S}}}^{2}(\mathcal{N})\rightarrow{{\mathfrak{S}%
}}^{2}(\mathcal{N})$ be a selfadjoint contraction. Suppose that $\mathcal{T}$
admits a bounded linear extension on ${{\mathfrak{S}}}^{p}(\mathcal{N}),$ for
all $1<p\leq2$. If
\begin{equation}
\Vert\mathcal{T}\Vert_{{{\mathfrak{S}}}^{p}(\mathcal{N})\rightarrow
{{\mathfrak{S}}}^{p}(\mathcal{N})}\leq\frac{Cp}{p-1},\;\;1<p\leq
2,\label{extrapola1}%
\end{equation}
then it is shown in \cite{STZ} that with some absolute constant $C$
\begin{equation}
\frac{1}{t}\int_{0}^{t}\mu(s,\mathcal{T}(A))ds\leq C\frac{1}{t}\int_{0}%
^{t}S(\mu(\cdot,A))(s)ds,\;\;t>0,\;A\in\mathcal{M}^{1}(\mathcal{N}%
).\label{extrapola}%
\end{equation}
Let us show how this result can be obtained by extrapolation. Indeed, we will
show that (\ref{extrapola}) is the exact non-commutative analogue of
Calder\'{o}n's result, whose abstract extrapolation extension was formulated
in \cite{jm} and discussed at length in Example \ref{ex:lag}. Let us present
the details. For this purpose it will be convenient to let $Pf(t):=\frac{1}%
{t}\int_{0}^{t}f(s)ds,Qf(t)=\int_{t}^{\infty}f(s)\frac{ds}{s}.$ Then, we can
rewrite (\ref{extrapola}) as%
\begin{equation}
P(\mu(\cdot,\mathcal{T}(A))(t)\leq CP(S(\mu(\cdot,A)))(t).\label{extrapola2}%
\end{equation}
Moreover, from the definitions of $P$ and $Q$, (\ref{extrapola0}), and a
simple computation, we have that%
\begin{align*}
S &  =P+Q\\
&  =PQ\\
&  =QP.
\end{align*}
Therefore%
\[
PS=P(QP)=PQP=QPP=SP
\]
and we can rewrite (\ref{extrapola2}) as%
\begin{equation}
P(\mu(\cdot,\mathcal{T}(A))(t)\leq C(SP(\mu(\cdot,A)))(t).\label{frum3}%
\end{equation}
On the other hand, if we combine the assumption (\ref{extrapola1}) with the
fact that $\mathcal{T}$ is selfadjoint, and the duality formula
${{\mathfrak{S}}}^{p}(\mathcal{N})^{\ast}={{\mathfrak{S}}}^{p^{\prime}%
}(\mathcal{N}),$ we see that%
\begin{equation}
\Vert\mathcal{T}\Vert_{{{\mathfrak{S}}}^{p}(\mathcal{N})\rightarrow
{{\mathfrak{S}}}^{p}(\mathcal{N})}\leq\frac{Cp^{2}}{p-1},\;\;1<p<\infty
.\label{frum}%
\end{equation}
Now taking into account the classical computation of $K-$functionals for
non-commutative $L^{p}$ spaces (cf. \cite{pee}, and
Example~\ref{non-commutative})%
\begin{equation}
K(t,A,{{\mathfrak{S}}}^{1}(\mathcal{N}),{{\mathfrak{S}}}^{\infty}%
(\mathcal{N}))=\int_{0}^{t}\mu(s,A)ds\label{frum2}%
\end{equation}
yields, just like in the commutative case (cf. \cite{Mil-03}), that%
\begin{equation}
{{\mathfrak{S}}}^{p}(\mathcal{N})=({{\mathfrak{S}}}^{1}(\mathcal{N}%
),{{\mathfrak{S}}}^{\infty}(\mathcal{N}))_{1/p^{\prime},p;K}%
^{\blacktriangleleft}.\label{frum1}%
\end{equation}
Given (\ref{frum}) and (\ref{frum1}) we can apply the extrapolation theorem of
\cite{jm} (discussed extensively in Example \ref{ex:lag} above) to obtain an
absolute constant $C$ such that
\[
\frac{K(t,\mathcal{T}(A),{{\mathfrak{S}}}^{1}(\mathcal{N}),{{\mathfrak{S}}%
}^{\infty}(\mathcal{N}))}{t}\leq C\,S(\frac{K(s,A,{{\mathfrak{S}}}%
^{1}(\mathcal{N}),{{\mathfrak{S}}}^{\infty}(\mathcal{N}))}{s})(t).
\]
Finally, combining the last inequality with the formula for the $K-$functional
provided by (\ref{frum2}) yields (\ref{frum3}), as we wished to show.

Likewise, it is shown in \cite[Theorem 14 (ii)]{STZ} that if $\mathcal{T}$ is
assumed to be of weak type $(1,1),$ then one can replace the non-commutative
averaging operator $P$ by the corresponding non-commutative $^{\ast}%
$-operation. Again from the previous discussion and Example
\ref{non-commutative} we see that this last result follows from Jawerth-Milman
\cite[Proposition 5.2.2, page 50]{jm}.

In particular, our extrapolation method allows to treat more general type of
norm decays, e.g. $\sim\left(  \frac{p}{p-1}\right)  ^{\alpha},\alpha>1,$ weak
type versions, etc.

\begin{problem}
\label{prob: non-comm} In view of the previous discussion and Example
\ref{non-commutative} we are asking for the non-commutative $(\infty,\infty)$
version of the Bennett-DeVore-Sharpley theorem. In this connection we ask:
Formulate interpolation/extrapolation theorems for operators assuming
properties of their adjoints. For example, what can be said about operators
$T$ such that $T$ and $T^{\ast}$ are weak type $(1,1)?$
\end{problem}

\section{More Open Ended Problems}

\subsection{Gagliardo coordinate spaces and Extrapolation}

\begin{problem}
This project asks to incorporate the \textquotedblleft Gagliardo coordinate
spaces" (cf. \cite{Mbmo}) to extrapolation theory. Let $\theta\in
\lbrack0,1],q\in(0,\infty].$ We define the spaces
\[
(X_{1},X_{2})_{\theta,q}^{(1)}=\left\{  f\in X_{1}+X_{2}:\left\Vert
f\right\Vert _{(X_{1},X_{2})_{\theta,q}^{(1)}}<\infty\right\}  ,
\]
where%
\[
\left\Vert f\right\Vert _{(X_{1},X_{2})_{\theta,q}^{(1)}}=\left\{  \int%
_{0}^{\infty}\left(  t^{1-\theta}\left[  \frac{K(t,f;X_{1},X_{2})}%
{t}-K^{\prime}(t,f;X_{1},X_{2})\right]  \right)  ^{q}\frac{dt}{t}\right\}
^{1/q},
\]
and%
\[
(X_{1},X_{2})_{\theta,q}^{(2)}=\left\{  f\in X_{1}+X_{2}:\left\Vert
f\right\Vert _{(X_{1},X_{2})_{\theta,q}^{(2)}}<\infty\right\}  ,
\]
where%
\[
\left\Vert f\right\Vert _{(X_{1},X_{2})_{\theta,q}^{(2)}}=\left\{  \int%
_{0}^{\infty}(t^{-\theta}tK^{\prime}(t,f;X_{1},X_{2}))^{q}\frac{dt}%
{t}\right\}  ^{1/q}.
\]
The Gagliardo coordinate spaces in principle are not linear, and the
corresponding functionals, $\left\Vert f\right\Vert _{(X_{1},X_{2})_{\theta
,q}^{(i)}},i=1,2,$ are not norms. However, it turns out that, when $\theta
\in(0,1),q\in(0,\infty],$ we have, with *norm* equivalence (cf. \cite{holm},
\cite{jm1}, \cite{Mbmo}),
\begin{equation}
(X_{1},X_{2})_{\theta,q}^{(1)}=(X_{1},X_{2})_{\theta,q}^{(2)}=(X_{1}%
,X_{2})_{\theta,q}. \label{equiva}%
\end{equation}
More precisely, the \textquotedblleft norm\textquotedblright\ equivalence
depends only on $\theta,$ and $q.$ On the other hand, at the end points,
$\theta=0$ or $\theta=1,$ the resulting spaces can be very different.
\end{problem}

Let $(X_{1},X_{2})=(L^{1},L^{\infty}).$ Then, if $\theta=1,q=\infty,$ we
obtain the space introduced by Bennett-DeVore-Sharpley \cite{bedesa}%
\begin{equation}
\left\Vert f\right\Vert _{(L^{1},L^{\infty})_{1,\infty}^{(1)}}=\left\Vert
f\right\Vert _{L(\infty,\infty)}=\sup\{f^{\ast\ast}(t)-f^{\ast}(t)\},\text{
}(L^{1},L^{\infty})_{0,\infty}^{(2)}\text{\ }=L(1,\infty). \label{wehaveseen}%
\end{equation}
It was shown in \cite{bedesa} that in the case of finite measure
$L(\infty,\infty)$ is the rearrangement invariant hull of BMO. The
corresponding space that one obtains when $q<\infty,$ $L(\infty,q),$ also
makes sense, and was first introduced by Bastero-Milman-Ruiz \cite{bmr} who
showed a sharp end point for the Sobolev embedding theorem
\[
\left\Vert f\right\Vert _{L(\infty,n)}\leq c\left\Vert \nabla f\right\Vert
_{W_{n}^{1}(\mathbb{R}^{n})},\text{ }f\in C_{0}^{\infty}(\mathbb{R}^{n}).
\]
More generally these spaces play an important r\^{o}le in the theory of
Sobolev inequalities (cf. \cite{milpu}). This justifies the interest in the following

\begin{problem}
(Open Ended) We ask to incorporate the Gagliardo coordinate spaces to
Extrapolation Theory. In particular, find the constants of basic interpolation
inequalities connected with Gagliardo coordinate spaces. In this direction
some results were obtained in \cite{Mbmo}.
\end{problem}

\begin{problem}
$L(\infty,\infty)$ vs $e^{L}?$ Here is a concrete extrapolation problem
concerning the space $L(\infty,\infty).$ What extra conditions are needed to
be able to extrapolate weak type $(\infty,\infty)$ from the usual
extrapolation assumptions? More concretely, suppose that $\left\Vert
Tf\right\Vert _{L^{p}}\leq c_{p}\left\Vert f\right\Vert _{L^{p}}$ with
$c_{p}=c\frac{p^{2}}{p-1},$ for large $p,$ under what extra conditions can we
conclude that restricted to simple functions, $T:L^{\infty}\rightarrow
L(\infty,\infty)?$
\end{problem}

\subsection{Calder\'{o}n-Mityagin Scales}

The characterization of Calder\'{o}n-Mityagin pairs has been extensively
studied in the context of interpolation theory (cf. \cite{BK} and the
references therein). The concept can be extended to scales of spaces. Here we
consider only one of the simplest possible definitions (cf. \cite[page
71-72]{jm} for a more general formulation). Let $\{F_{\theta}\}_{\theta\in I}%
$, $\{G_{\theta}\}_{\theta\in I}$ be two families of interpolation functors of
exact type $\theta,$ and let $M(\theta)$ be a tempered weight. Let $\vec
{A},\vec{B}$ be pairs of mutually closed Banach spaces. We shall say that the
pair of scales $(\{F_{\theta}(\vec{A})\}_{\theta\in I},\{G_{\theta}(\vec
{B})\}_{\theta\in I})$ is a \textit{relative Calder\'{o}n-Mityagin $M-$pair of
scales}, if given $a\in\sum A_{\theta}$ and $b\in\sum B_{\theta},$ such that
\[
K(t,b;\{G_{\theta}(\vec{B})\}_{\theta\in I})\leq cK(t,a;\{M(\theta)F_{\theta
}(\vec{A})\}_{\theta\in I}),\;\;t>0,
\]
where $c>0$ is a constant independent of $t>0$, it follows that there exist an
operator $T$ and constant $C>0$ such that%
\[
T:\{M(\theta)F_{\theta}(\vec{A})\}_{\theta\in I}\overset{C}{\rightarrow
}\{G_{\theta}(\vec{B})\}_{\theta\in I},\text{ with }Ta=b.
\]

\begin{example}
(cf. \cite[Theorem 6.4]{jm}) Let $M(\theta)$ be such that $\tau(t)=\inf
_{0<\theta<1} (M(\theta)t^{\theta})$ is a $C^{2}$-function with $-t^{2}%
\tau^{\prime\prime}(t)$ quasi-concave. Let $\vec{A},\vec{B}$ be mutually
closed pairs, and let $F_{\theta}(\vec{A})=\vec{A}_{\theta,1;J}%
^{\blacktriangleleft},$ and $G_{\theta}(\vec{B})=\vec{B}_{\theta,\infty
;K}^{\blacktriangleleft}.$ Then $(\{\vec{A}_{\theta,1;J}^{\blacktriangleleft
}\}_{\theta\in I},\{\vec{B}_{\theta,\infty;K}^{\blacktriangleleft}%
\}_{\theta\in I})$ is a Calder\'{o}n-Mityagin $M-$pair of scales.
\end{example}

\begin{example}
(cf. \cite[Corollary 6.6]{jm}) Under the same assumptions as in the previous
example, suppose that $1\leq q(\theta)\leq\infty.$ Let $F_{\theta}(\vec
{A})=\vec{A}_{\theta,q(\theta);J}^{\blacktriangleleft},$ and $G_{\theta}%
(\vec{B})=\vec{B}_{\theta,q(\theta);K}^{\blacktriangleleft}.$ Then,
$(\{\vec{A}_{\theta,q(\theta);J}^{\blacktriangleleft}\}_{\theta\in I}%
,\{\vec{B}_{\theta,q(\theta);K}^{\blacktriangleleft}\}_{\theta\in I})$ is a
Calder\'{o}n-Mityagin $M-$pair of scales.
\end{example}

\begin{example}
If $F_{1/p^{\prime}}(\vec{A})=G_{1/p^{\prime}}(\vec{A})=\vec{A}_{1/p^{\prime
},p;K},$ then the previous Example implies that $(\{L^{p}\}_{p>1}%
,\{L^{p}\}_{p>1})$ is a Calder\'{o}n-Mityagin $1-$pair of scales (this is
essentially a reformulation of the classical result of Calder\'{o}n-Mityagin).
\end{example}

\begin{problem}
Let $\{F_{\theta}\}_{\theta\in I}$, $\{G_{\theta}\}_{\theta\in I}$ be two
families of interpolation functors of exact type $\theta,$ and let $M(\theta)$
be a weight. Find sharp conditions on $M\,$so that for all $\vec{A},\vec{B}$
be mutually closed pairs $(\{F_{\theta}(\vec{A})\}_{\theta\in I},\{G_{\theta
}(\vec{B})\}_{\theta\in I})$ is a Calder\'{o}n-Mityagin $M-$pair of scales.
\end{problem}

\subsection{Complex Extrapolation (open ended project)}

Given the central r\^{o}le of complex methods in Interpolation theory it is
somewhat surprising that so far there has been little progress in the
direction of developing connections between complex methods and Extrapolation
theory. The general interpolation methods introduced in \cite{cw} provide a
unification of real and complex interpolation. We now give a brief summary of
the basic definitions.

The spaces introduced in \cite{cw} are based on an extension of the concept of
\textquotedblleft lattice\textquotedblright. Let $\mathbf{Ban}$ be the class
of all Banach spaces over the complex numbers. Then we say that a mapping
$\mathcal{X}:\mathbf{Ban}\rightarrow\mathbf{Ban}$ is a pseudolattice\textit{
(}or a pseudo-$Z$-lattice)\textit{, }if it satisfies the following conditions

(i) for each $B\in\mathbf{Ban}$ the space $\mathcal{X}(B)$ consists of
$B-$valued sequences $\{b_{n}\}_{n\in\mathbb{Z}}$;

(ii) whenever $A$ is a closed subspace of $B$ it follows that $\mathcal{X}(A)$
is a closed subspace of $\mathcal{X}(B)$;

(iii) there exists a positive constant $C=C(\mathcal{X})$ such that, for all
$A,B\in\mathbf{Ban}$ and all bounded linear operators $T:A\rightarrow B$ and
every sequence $\{a_{n}\}_{n\in\mathbb{Z}}\in\mathcal{X}(A)$, the sequence
$\{Ta_{n}\}_{n\in\mathbb{Z}}\in\mathcal{X}(B)$ and satisfies the estimate
\[
\Vert\{Ta_{n}\}_{n\in\mathbb{Z}}\Vert_{\mathcal{X}(B)}\leq C(\mathcal{X})\Vert
T\Vert_{A\rightarrow B}\Vert\{a_{n}\}\Vert_{\mathcal{X}(A)};
\]

(iv)%
\[
\Vert b_{m}\Vert_{B}\leq\Vert\{b_{n}\}\Vert_{\mathcal{X}(B)}%
\]
for all $m\in\mathbb{Z}$, all $\{b_{n}\}_{n\in\mathbb{Z}}\in\mathcal{X}(B)$
and all Banach spaces $B$.

\textbf{Example of pseudo-lattices:} Lattices; the Fourier spaces $FL^{1},FC$;
$UC;$ the space of unconditionally convergent series; $WUC,$ the space of
weakly unconditionally convergent sequences. We refer to \cite{cw} for
complete details.

For each Banach pair $\vec{B}$\textbf{\ }and each pair $\mathbf{X=(\mathcal{X}%
_{0},\mathcal{X}_{1})}$ of pseudolattices, let $\mathcal{J}(\mathbf{X},\vec
{B})$ to be the space of all $B_{0}\cap B_{1}$ valued sequences $\{b_{n}%
\}_{n\in\mathbb{Z}}$ for which the sequence $\{e^{jn}b_{n}\}_{n\in\mathbb{Z}}$
is in $\mathcal{X}_{j}(B_{j})$ for $j=0,1$. This space is normed by%

\[
\Vert\{b_{n}\}_{n\in\mathbb{Z}}\Vert_{\mathcal{J}(\mathbf{X},\vec{B})}%
:=\max_{j=0,1}\Vert\{e^{jn}b_{n}\}_{n\in\mathbb{Z}}\Vert_{\mathcal{X}%
_{j}(B_{j})}.
\]

For each Banach pair $\vec{B}$ , each pair of pseudolattices $\mathbf{X}$ as
above, and each fixed $s\in\mathbb{A=}$ $\{z\in\mathbb{C}:1<\left\vert
z\right\vert <e\}$, the spaces $\vec{B}_{\mathbf{X},s}$ consist of all
elements of the form $b=\sum_{n\in\mathbb{Z}}s^{n}b_{n}$ where $\{b_{n}%
\}_{n\in\mathbb{Z}}\in\mathcal{J}(\mathbf{X},\vec{B})$, with the natural
quotient norm
\begin{equation}
\left\Vert b\right\Vert _{\vec{B}_{\mathbf{X},s}}:=\inf\left\{  \left\Vert
\{b_{n}\}_{n\in\mathbb{Z}}\right\Vert _{\mathcal{J}(\mathbf{X},\vec{B}%
)}:b=\sum_{n\in\mathbb{Z}}s^{n}b_{n}\right\}  . \label{djms}%
\end{equation}
\bigskip

\textbf{Examples: }Let $s=e^{\theta}$ for some $\theta\in(0,1).$ (i) If
$\mathbf{X=(\mathcal{X}_{0},\mathcal{X}_{1}),}$ with $\mathcal{X}%
_{0}=\mathcal{X}_{1}=\ell^{p}$, then space $\vec{B}_{\mathbf{X},s}$ coincides
with the Lions-Peetre real method space $\vec{B}_{\theta,p}=(B_{0}%
,B_{1})_{\theta,p}$; (ii) If $\mathbf{X=(\mathcal{X}_{0},\mathcal{X}_{1}),}$
with $\mathcal{X}_{0}=\mathcal{X}_{1}=FC$, then $\vec{B}_{\mathbf{X},s}$
coincides, to within equivalence of norm, with the Calder\'{o}n complex method
space $\vec{B}_{[\theta]}=[B_{0},B_{1}]_{\theta}=[\vec{B}]_{\theta}$; (iii)
Likewise, if $\mathcal{X}_{0}=\mathcal{X}_{1}=UC$, then $\vec{B}%
_{\mathbf{X},s}$ is the Peetre $\pm$ method space $\vec{B}_{<\theta>}=\langle
B_{0},B_{1}\rangle_{\theta}$; If we replace $UC$ by $WUC$, in (iii) we obtain
the Gustavsson-Peetre variant of $\langle B_{0},B_{1}\rangle_{\theta}$ which
is denoted by $\langle\vec{B},\rho_{\theta}\rangle$ (cf. \cite{jan}).

\begin{problem}
We ask to incorporate the interpolation spaces $\vec{B}_{\mathbf{X},s}$ of
\cite{cw} to Extrapolation theory. In particular, we ask for an extrapolation
version of the classical interpolation theorem for analytic families of
operators (cf. \cite{cwja1}). For a different possible connection between
complex methods and extrapolation we refer to \cite{lemp} and the references therein.
\end{problem}

\section{Appendix \label{sec:app}}

Let us start recalling some definitions from interpolation theory (cf.
\cite{BL}). The classical theory of interpolation deals with pairs of
compatible Banach spaces (\textquotedblleft Banach pairs" or simply
\textquotedblleft pairs"), $\vec{A}=(A_{0},A_{1}),$ which are contained in a
suitable larger Hausdorff topological space. We say that a Banach space $H$ is
intermediate with respect to $\vec{A},$ if $A_{0}\cap A_{1}\subset H\subset
A_{0}+A_{1}.$ Given two Banach pairs $\vec{A}$ and $\vec{B},$ let $H$ be
intermediate for the pair $\vec{A}$ and let $G$ be intermediate for the pair
$\vec{B},$ we then say that the Banach spaces $H,G,$ are interpolation spaces
with respect to the pairs $\vec{A}$ and $\vec{B}$ if any operator $T$ that is
bounded from $\vec{A}$ to $\vec{B}$ \footnote{That is $T:A_{i}\rightarrow
B_{i},i=0,1.$} also defines a bounded operator $T:H\rightarrow G.$ An
\textquotedblleft interpolation method\textquotedblright\ is a functor $F$
that assigns to each pair $\vec{A}$ an interpolation space $F(\vec{A})$ so
that for all linear operators $T$ that are bounded from $\vec{A}$ to $\vec{B}$
we have that $T:F(\vec{A})\rightarrow F(\vec{B})$ is bounded. An interpolation
method is exact if $\left\Vert T\right\Vert _{F(\vec{A})\rightarrow F(\vec
{B})}\leq\max\{\left\Vert T\right\Vert _{A_{0}\rightarrow B_{0}},\left\Vert
T\right\Vert _{A_{1}\rightarrow B_{1}}\}:=\left\Vert T\right\Vert _{\vec
{A}\rightarrow\vec{B}}.$

\subsection{The $K$ and $J$ methods of interpolation\label{apendixA:1}}

It can be argued that the most successful method of real interpolation is the
one based on using the $K-$functional of Peetre\footnote{Calder\'{o}n and his
student Oklander (cf. \cite{oklan}, \cite{Oklander}) independently also
defined $K-$functionals for Banach pairs and implemented some of the early
applications of $K-$functionals to interpolation theory. In particular, to
weak type interpolation (cf. Section \ref{sec:J&M}).}. The method explicitly
provides a penalty problem on the splitting of elements that underlies the
Lions-Peetre method of interpolation.

Recall that given a compatible pair of Banach spaces $\vec{X}=(X_{0},X_{1})$
we let, for $f\in X_{0}+X_{1},t>0,$%
\[
K(t,f;\vec{X}):=\inf_{f=f_{0}+f_{1},f_{i}\in X_{i}}\{\left\Vert f_{0}%
\right\Vert _{X_{0}}+t\left\Vert f_{0}\right\Vert _{X_{1}}\}.
\]
It follows immediately that if $T$ is a bounded operator, $T:\vec
{X}\rightarrow\vec{Y},$ then%
\[
K(t,Tf;\vec{Y})\preceq K(t,f;\vec{X}),\;\;t>0.
\]
If $\rho$ is any function norm on measurable functions on $(0,\infty)$
then\footnote{It is often more convenient to write the inequalities in terms
of decreasing functions and thus use the expression $\frac{K(t,f;\vec{X})}%
{t}.$}%
\[
\rho(K(\cdot,Tf;\vec{Y}))\preceq\rho(K(\cdot,f;\vec{X})).
\]
In particular, let $0<\theta<1,$ \ and $1\leq q\leq\infty$, and consider the
function norms
\[
\Phi_{\theta,q}(f):=\left\{
\begin{array}
[c]{cc}%
\left\{  \int_{0}^{\infty}\left(  s^{-\theta}|f(s)|\right)  ^{q}\frac{ds}%
{s}\right\}  ^{1/q} & if\text{ }q<\infty\\
\sup_{s>0}\left\{  s^{-\theta}|f(s)|\right\}  & if\text{ }q=\infty.
\end{array}
\right.
\]
The Lions-Peetre interpolation spaces $\vec{X}_{\theta,q;K}$ consist of the
elements $f\in X_{0}+X_{1},$ such that $\Vert f\Vert_{\vec{X}_{\theta,q;K}%
}<\infty,$ where
\[
\Vert f\Vert_{\vec{X}_{\theta,q;K}}:=\Phi_{\theta,q}(K(\cdot,f;\vec{X})).
\]
We normalize the norms so that the interpolation functor $\vec{X}%
\rightarrow\vec{X}_{\theta,q;K}$ is of exact type $\theta,$ and for each
Banach pair $\vec{X}$ we denote the corresponding normalized spaces by
$\vec{X}_{\theta,q;K}^{\blacktriangleleft}:$
\begin{equation}
\Vert f\Vert_{\vec{X}_{\theta,q;K}^{\blacktriangleleft}}:=(q\theta
(1-\theta))^{\frac{1}{q}}\Vert f\Vert_{\vec{X}_{\theta,q;K}}, \label{ck}%
\end{equation}
with the convention that $(q\theta(1-\theta))^{\frac{1}{q}}=1$ when
$q=\infty.$ Our convention means that%
\[
\Vert\circ\Vert_{\vec{X}_{\theta,\infty;K}^{\blacktriangleleft}}=\Vert
\circ\Vert_{\vec{X}_{\theta,\infty;K}}.
\]

There is a dual construction associated with the $J-$functional which is
defined for $g\in X_{0}\cap X_{1},t>0,$ by%
\[
J(t,g;\vec{X}):=\max\{\left\Vert g\right\Vert _{X_{0}},t\left\Vert
g\right\Vert _{X_{1}}\}.
\]
The corresponding $\vec{X}_{\theta,q;J}$ spaces consist of all $g\in
X_{0}+X_{1}$ that can be represented as
\begin{equation}
g=\int_{0}^{\infty}u(t)\frac{dt}{t}\;\;\text{ (convergence in }X_{0}+X_{1}),
\label{represetation}%
\end{equation}
for some strongly measurable function $u:(0,\infty)\rightarrow X_{0}\cap
X_{1}$ such that $\ \Phi_{\theta,q}(J(s,u(s);\vec{X}))<\infty$. We let%
\[
\Vert g\Vert_{\vec{X}_{\theta,q;J}}:=\inf\{\Phi_{\theta,q}(J(s,u(s);\vec
{X})):g=\int_{0}^{\infty}u(t)\frac{dt}{t}\}.
\]
The interpolation functor $\vec{X}\rightarrow\vec{X}_{\theta,q;J}$ can be
normalized so that it becomes of exact type $\theta.$ This is achieved using
the norms
\begin{equation}
\Vert g\Vert_{_{^{\vec{X}_{\theta,q;J}^{\blacktriangleleft}}}}:=(q^{\prime
}\theta(1-\theta))^{-1/q^{\prime}}\Vert g\Vert_{_{\vec{X}_{\theta,q;J}}},
\label{cj}%
\end{equation}
with the convention that if $q=1$ we set $(q^{\prime}\theta(1-\theta
))^{-1/q^{\prime}}=1$. Thus,%
\[
\Vert\circ\Vert_{\vec{X}_{\theta,1;J}}^{\blacktriangleleft}=\Vert\circ
\Vert_{\vec{X}_{\theta,1;J}}.
\]
We will consider also the modified spaces $\langle\vec{X}\rangle_{\theta
,q;K},$ defined by
\begin{equation}
\langle\vec{X}\rangle_{\theta,q;K}:=\{f\in X_{0}+X_{1}:\Vert f\Vert
_{\langle\vec{X}\rangle_{\theta,q;K}}:=\Phi_{\theta,q}(\chi_{(0,1)}%
K(s,f;\vec{X}))<\infty\} \label{dela}%
\end{equation}
and the similarly constructed $\langle\vec{X}\rangle_{\theta,q;J}$ spaces (cf.
\cite{ALM}).

\begin{example}
From $K(t,f;L^{1},L^{\infty})=tf^{\ast\ast}(t)=\int_{0}^{t}f^{\ast}(s)ds,$ by
Hardy's inequality and reverse Hardy's inequality for decreasing functions
(cf. \cite[Example~7]{Mil-03}), it follows that
\[
(L^{1},L^{\infty})_{1/p^{\prime},p;K}^{\blacktriangleleft}=L^{p},1<p<\infty,
\]
with constants of norm equivalence independent of $p.$
\end{example}

\subsubsection{The strong form of the fundamental Lemma\label{sec:sfl}}

Underlying the equivalence of these methods is the fundamental Lemma of
Interpolation theory (cf. \cite{BL}). The strong form of the fundamental Lemma
can be found in \cite{CJM90} and states that there exists a constant $\gamma$
such that if $\vec{X}$ is a mutually closed pair then, for all $f\in
X_{0}+X_{1},$ such that $\lim K(t,f;\vec{X})\min\{1,\frac{1}{t}\}=0$ when
$t\rightarrow0$ and $t\rightarrow\infty$, and for all $\varepsilon>0,$ there
exists $u:(0,\infty)\rightarrow X_{0}\cap X_{1},$ strongly measurable, such
that $f=\int_{0}^{\infty}u(s)\frac{ds}{s}$ and
\[
\int_{0}^{\infty}\min(1,\frac{t}{s})J(s,u(s);\vec{X})\frac{ds}{s}\leq
(\gamma+\varepsilon)K(t,f;\vec{X}).
\]
It follows that there exists a decomposition $f=\int_{0}^{\infty}u(s)\frac
{ds}{s}$ such that%
\[
K(t,f;\vec{X})\approx\int_{0}^{\infty}\min(1,\frac{t}{s})J(s,u(s);\vec
{X})\frac{ds}{s},\;\;t>0.
\]

\begin{example}
\label{exa:sfl}Let us show that for a mutually closed Banach pair $\vec{X}$ we
have%
\[
\vec{X}_{\theta,1;K}^{\blacktriangleleft}=\vec{X}_{\theta,1;J}.
\]
We shall use the elementary inequality (cf. \cite[Lemma~3.2.1]{BL})%
\[
K(t,f;\vec{X})\leq\min(1,\frac{t}{s})J(s,f;\vec{X}),\;\;t,s>0.
\]
Then, for any decomposition%
\[
f=\int_{0}^{\infty}u(s)\frac{ds}{s},
\]
we have%
\[
K(t,f;\vec{X})\leq\int_{0}^{\infty}\min(1,\frac{t}{s})J(s,u(s);\vec{X}%
)\frac{ds}{s}.
\]
Therefore,%
\begin{align*}
\int_{0}^{\infty}K(t,f;\vec{X})t^{-\theta}\frac{dt}{t}  &  \leq\int%
_{0}^{\infty}J(s,u(s);\vec{X})\int_{0}^{\infty}\min(1,\frac{t}{s})t^{-\theta
}\frac{dt}{t}\frac{ds}{s}\\
&  =\frac{1}{\theta(1-\theta)}\int_{0}^{\infty}J(s,u(s);\vec{X})s^{-\theta
}\frac{ds}{s}.
\end{align*}
Taking infimum over all such decompositions we find%
\[
\left\Vert f\right\Vert _{\vec{X}_{1/p^{\prime},1;K}^{\blacktriangleleft}}%
\leq\left\Vert f\right\Vert _{\vec{X}_{1/p^{\prime},1;J}}.
\]
On the other hand, applying the strong form of the fundamental Lemma, we can
find a special decomposition such that%
\[
\int_{0}^{\infty}\min(1,\frac{t}{s})J(s,u(s);\vec{X})\frac{ds}{s}\leq\gamma
K(t,f;\vec{X}).\label{recol1}%
\]
Consequently,
\begin{align*}
\frac{1}{\theta(1-\theta)}\int_{0}^{\infty}J(s,u(s);\vec{X})s^{-\theta}%
\frac{ds}{s}  &  =\int_{0}^{\infty}J(s,u(s);\vec{X})\int_{0}^{\infty}%
\min(1,\frac{t}{s})t^{-\theta}\frac{dt}{t}\frac{ds}{s}\\
&  \leq\gamma\int_{0}^{\infty}K(t,f;\vec{X})t^{-\theta}\frac{dt}{t}%
\end{align*}
and the desired result follows.
\end{example}

\begin{remark}
\label{re:delotro}For some problems it is useful to replace integrals by
series. We refer to \cite{CJM90} for complete details.
\end{remark}

\subsection{Extreme extrapolation functors\label{apendixA:2}}

In extrapolation the starting point are families of interpolation
spaces, and we are trying to find the end point spaces of them. Here is the basic set up. We are given compatible families $\{A_{\theta}\}_{\theta\in(0,1)}$ and
$\{B_{\theta}\}_{\theta\in(0,1)}$ of Banach spaces (in the sense that there
exist two Banach spaces $\mathbb{A}_{0}$ and $\mathbb{A}_{1}$, such that for
each $\theta\in(0,1)$, we have with continuous inclusions\footnote{In practice
these families consist of interpolation spaces, e.g. $A_{\theta}=[A_{0}%
,A_{1}]_{\theta},$ $A_{\theta}=(A_{0},A_{1})_{\theta,q(\theta)},$ and in this
case we can take $\mathbb{A}_{0}=A_{0}+A_{1},$ and $\mathbb{A}_{1}=A_{0}\cap
A_{1}.$ In particular, if the pair ($A_{0},A_{1})$ is ordered, $A_{1}\subset
A_{0}$, we can take $\mathbb{A}_{0}=A_{0},\mathbb{A}_{1}=A_{1}.$ More
generally, the construction of the $\Delta$-method makes sense if we consider
families of spaces $\{A_{\theta}\}_{\theta\in\gamma},$ where $\gamma$ is a
lattice (cf. \cite{mami}).} $\mathbb{A}_{1}\subset A_{\theta}\subset
\mathbb{A}_{0}).$ We shall say that $A$ and $B$ are extrapolation spaces with
respect to the compatible families $\{A_{\theta}\}_{\theta\in(0,1)}%
,\{B_{\theta}\}_{\theta\in(0,1)}$ if $\mathbb{A}_{1}\subset A\subset
\mathbb{A}_{0},\mathbb{A}_{1}\subset B\subset\mathbb{A}_{0},$ and every
operator $T$ that is bounded, $T:A_{\theta}\overset{1}{\rightarrow}B_{\theta}$
for all $\theta\in(0,1),$ has an extension that is bounded, $T:A\rightarrow
B.$ An extrapolation method $\mathfrak{E}$ assigns to each compatible family
an extrapolation space $\mathfrak{E}(\{A_{\theta}\}_{\theta\in(0,1)})$ with
the following interpolation property. If $T$ is an operator such that
$T:A_{\theta}\overset{1}{\rightarrow}B_{\theta},$ for each $\theta\in(0,1),$
then $T$ can be extended to $T:\mathfrak{E}(\{A_{\theta}\}_{\theta\in
(0,1)})\rightarrow\mathfrak{E}(\{B_{\theta}\}_{\theta\in(0,1)})$. Given a
compatible family $\{A_{\theta}\}_{\theta\in(0,1)},$ we let $M_{\Sigma}%
(\theta)$ denote the norm of the inclusions$\,\ A_{\theta}\subset
\mathbb{A}_{0},$ and let $M_{\Delta}(\theta)$ denote the norm of the
corresponding inclusions $\mathbb{A}_{1}\subset A_{\theta}.$ We shall say that
the family is \textit{strongly compatible} if these inclusions are uniformly
bounded, that is if $\sup_{\theta\in(0,1)}\{M_{\Sigma}(\theta),M_{\Delta
}(\theta)\}<\infty.$

Here we shall restrict ourselves to consider strongly compatible
families\footnote{Which we will also refer to as \textquotedblleft
families\textquotedblright}. There are two natural constructions of strongly
compatible families: the $\Sigma$- and $\Delta$-methods of extrapolation.
Given a family $\{A_{\theta}\}_{\theta\in(0,1)},$ the space $\Sigma
(\{A_{\theta}\}_{\theta\in(0,1)})$ consists of all the elements $x\in
\mathbb{A}_{0}$ that can be represented \ by $x=\sum_{0<\theta<1}a_{\theta}$,
$a_{\theta}\in A_{\theta}$, with $\sum_{0<\theta<1}\left\Vert a_{\theta
}\right\Vert _{A_{\theta}}<\infty$. We endow $\Sigma(\{A_{\theta}\}_{\theta
\in(0,1)})$ with the corresponding quotient norm. It is customary to write
$\Sigma_{\theta\in(0,1)}A_{\theta}$ rather than $\Sigma\mathfrak{(}%
\{A_{\theta}\}_{\theta\in(0,1)}).$

Likewise, we let form the space $\Delta\{A_{\theta}\}_{\theta\in(0,1)}$ of all
elements $x\in\cap_{\theta\in(0,1)}A_{\theta},$ such that
\[
\left\Vert x\right\Vert _{\Delta\{A_{\theta}\}_{\theta\in(0,1)}}:=\sup
_{\theta\in(0,1)}\left\Vert x\right\Vert _{A_{\theta}}<\infty.
\]
It is customary to write $\Delta_{\theta\in(0,1)}A_{\theta}$ rather than
$\Delta\{A_{\theta}\}_{\theta\in(0,1)}$. It is easy to verify that the
$\Sigma$ and $\Delta$ are extrapolation functors, and moreover they are
\textbf{exact} in the sense that, if $\mathfrak{E}$ denotes either the
$\Sigma$- or $\Delta$-method and $T:A_{\theta}\overset{1}{\rightarrow
}B_{\theta},$ $\theta\in(0,1)$, then%
\[
\left\Vert T\right\Vert _{\mathfrak{E(}\{A_{\theta}\}_{\theta\in
(0,1)})\rightarrow\mathfrak{E(}\{B_{\theta}\}_{\theta\in(0,1)})}\leq
\sup_{0<\theta<1}\{\left\Vert T\right\Vert _{A_{\theta}{\rightarrow}B_{\theta
}}\}.
\]

The $\Sigma$-method exhibits a behavior analogous to the $A_{\theta,1,J}$
spaces, while the $\Delta$-method is closely related to the $A_{\theta
,\infty,K}$-construction of classical interpolation theory. In the setting of
rearrangement invariant spaces these constructions are related to the Lorentz
spaces ($\Sigma$-method) and the Marcinkiewicz spaces ($\Delta$-method). Thus,
the $\Sigma$- and $\Delta$-methods are, in a suitable sense, extremal
extrapolation functors on the class of exact extrapolation functors. To see
this let us first show that

\begin{lemma}
An extrapolation method applied to a constant family, i.e. a family where all
its elements are equal to a given Banach space, reproduces this space. In
other words, if given a Banach space $A$ we consider the family $\{A_{\theta
}\}_{\theta\in(0,1)},$ where $A_{\theta}=A,$ for all $\theta\in(0,1),$ then if
$\mathfrak{E}$ is an extrapolation functor we have,
\[
\mathfrak{E(}\{A_{\theta}\}_{\theta\in(0,1)})=A.
\]

\end{lemma}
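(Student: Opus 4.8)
The plan is to show the two inclusions $\mathfrak{E}(\{A_\theta\}_{\theta\in(0,1)}) \subseteq A$ and $A \subseteq \mathfrak{E}(\{A_\theta\}_{\theta\in(0,1)})$ separately, exploiting nothing more than the defining interpolation property of an extrapolation functor together with the boundedness of the identity map. The key device is to compare the constant family $\{A_\theta\}$ with a trivially different constant family and to use the identity operator as the intertwining map.

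First I would establish $\mathfrak{E}(\{A_\theta\}) \subseteq A$ with norm control. Consider the identity operator $\mathrm{id}: A_\theta \to A$. Since $A_\theta = A$ for every $\theta$, this map is bounded with norm $1$ from $A_\theta$ to $A$ for all $\theta \in (0,1)$. Now regard $A$ itself as the (constant) family $\{B_\theta\}$ with $B_\theta = A$. Then $\mathrm{id}: A_\theta \overset{1}{\to} B_\theta$ for all $\theta$, so by the interpolation property of $\mathfrak{E}$ we get a bounded extension $\mathrm{id}: \mathfrak{E}(\{A_\theta\}) \to \mathfrak{E}(\{B_\theta\}) = \mathfrak{E}(\{A_\theta\})$ — which is not yet what I want. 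The cleaner route: apply the extrapolation property to the maps $\mathrm{id}: A_\theta \to A$ viewing the target as a \emph{constant} family built from $A$, but to conclude something about $A$ (not $\mathfrak{E}(\{A_\theta\})$) I instead need a handle on $\mathfrak{E}$ applied to a constant family in a degenerate way. The honest approach is: the continuous inclusions $\mathbb{A}_1 \subseteq A_\theta \subseteq \mathbb{A}_0$ here degenerate, since $A_\theta = A$, into $\mathbb{A}_1 = \mathbb{A}_0 = A$; hence by the definition of extrapolation space one has $\mathbb{A}_1 \subseteq \mathfrak{E}(\{A_\theta\}) \subseteq \mathbb{A}_0$, i.e. $A \subseteq \mathfrak{E}(\{A_\theta\}) \subseteq A$, which is exactly the claim (as sets, and then the norms agree up to the constant built into the functor, or exactly if $\mathfrak{E}$ is exact). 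So the proof reduces to the observation that for a constant family the ambient spaces $\mathbb{A}_0$ and $\mathbb{A}_1$ coincide with $A$.

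To make this fully rigorous I would spell out that a compatible family $\{A_\theta\}$ comes equipped with $\mathbb{A}_0, \mathbb{A}_1$ satisfying $\mathbb{A}_1 \subseteq A_\theta \subseteq \mathbb{A}_0$ with uniformly bounded inclusions; in the constant case we may (and the canonical choice does) take $\mathbb{A}_0 = \mathbb{A}_1 = A$ with inclusion norms $1$. The very definition of an extrapolation space $E$ for the family demands $\mathbb{A}_1 \subseteq E \subseteq \mathbb{A}_0$, forcing $E = A$ as a set with equivalent norm; and since $\mathfrak{E}(\{A_\theta\})$ is by hypothesis an extrapolation space for this family, we conclude $\mathfrak{E}(\{A_\theta\}) = A$. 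If one wants the norm equality (not merely equivalence), invoke exactness of $\mathfrak{E}$: applying the functor to the identity maps $\mathrm{id}: A_\theta \overset{1}{\to} A_\theta$ in both directions between the constant family and the constant family $A$ yields $\|\mathrm{id}\|_{\mathfrak{E}(\{A_\theta\}) \to A} \le 1$ and $\|\mathrm{id}\|_{A \to \mathfrak{E}(\{A_\theta\})} \le 1$, hence the norms coincide.

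The main obstacle — such as it is — is purely bookkeeping: being careful that the data of a "compatible family" in the sense used here really does let one pick $\mathbb{A}_0 = \mathbb{A}_1 = A$ in the constant case, so that the sandwich $\mathbb{A}_1 \subseteq \mathfrak{E}(\{A_\theta\}) \subseteq \mathbb{A}_0$ collapses. Once that is granted the argument is immediate. I would present it in two short paragraphs: one invoking the definition of extrapolation space to get the set-theoretic equality, and one invoking exactness (or the interpolation property applied to identity maps) to upgrade to norm equality.
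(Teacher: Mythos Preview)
Your proposal is correct and, once you abandon the initial false start with identity maps, lands on exactly the paper's argument: for the constant family one takes $\mathbb{A}_0=\mathbb{A}_1=A$, and the defining sandwich $\mathbb{A}_1\subset\mathfrak{E}(\{A_\theta\})\subset\mathbb{A}_0$ collapses to $\mathfrak{E}(\{A_\theta\})=A$ with equivalent norms. Your additional remark about upgrading to exact norm equality via exactness is slightly circular as written (the target of the functor is again $\mathfrak{E}(\{A\})$, not $A$ itself), but the paper only claims equivalence of norms anyway, so this is a harmless aside.
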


\begin{proof}
In fact, by definition%
\[
A=\mathbb{A}_{0}\subset\mathfrak{E(}\{A_{\theta}\}_{\theta\in(0,1)}%
)\subset\mathbb{A}_{1}=A,
\]
which forces (with equivalent norms)
\[
\mathfrak{E(}\{A_{\theta}\}_{\theta\in(0,1)})=A,
\]
as we wished to show\footnote{At this point it will be convenient to agree on
the following notation. Let $A$ be a Banach space and let $\mathfrak{E}$ be an
extrapolation functor. By abuse of notation we shall write \ $\mathfrak{E}(A)$
to denote the space $\mathfrak{E}(\{A_{\theta}\}_{\theta\in(0,1)}),$ where
$A_{\theta}=A,\theta\in(0,1).$ Thus, with this notation the previous
discussion shows that we have $\mathfrak{E}(A)=A.$}
\end{proof}

At this point it will be convenient agree that if $\mathfrak{E}$ is an
extrapolation functor and $A$ is a Banach space by abuse of language we shall
let $\mathfrak{E(}A):=\mathfrak{E(\{}A\}_{\theta\in(0,1)}).$

We shall now compare any exact extrapolation functor $\mathfrak{E}$ with the
$\sum$ and $\Delta$ functors.

\begin{lemma}
\label{elementary} Let $\mathfrak{E}$ be an exact extrapolation functor, then
for all strongly compatible families, $\{A_{\theta}\}_{\theta\in(0,1)}$ we
have
\[
\Delta(\{A_{\theta}\}_{\theta\in(0,1)})\overset{1}{\subset}\mathfrak{E(}%
\{A_{\theta}\}_{\theta\in(0,1)})\overset{1}{\subset}\sum(\{A_{\theta
}\}_{\theta\in(0,1)}).
\]

\end{lemma}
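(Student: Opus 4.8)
The plan is to prove the two continuous inclusions separately, using only the definitions of the $\Delta$-, $\Sigma$- and general exact extrapolation functors, together with the universal mapping property that every exact extrapolation functor enjoys. The key observation is that for a fixed element $x$ one compares the family $\{A_{\theta}\}_{\theta\in(0,1)}$ with a suitable \emph{constant} family and applies the preceding Lemma (which says $\mathfrak{E}(A)=A$ for a constant family with value $A$), exactly as one does for the analogous statement about $J$- and $K$-functors in classical interpolation.

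\begin{proof}
We prove the two inclusions in turn.

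\textbf{The inclusion $\Delta(\{A_{\theta}\}_{\theta\in(0,1)})\overset{1}{\subset}\mathfrak{E}(\{A_{\theta}\}_{\theta\in(0,1)})$.} Fix $x\in\Delta(\{A_{\theta}\}_{\theta\in(0,1)})$, so that $x\in\cap_{\theta\in(0,1)}A_{\theta}$ and $\left\Vert x\right\Vert_{\Delta}=\sup_{\theta}\left\Vert x\right\Vert_{A_{\theta}}<\infty$. Write $D:=\Delta(\{A_{\theta}\}_{\theta\in(0,1)})$ and consider the constant family $\{D_{\theta}\}_{\theta\in(0,1)}$ with $D_{\theta}=D$ for all $\theta$. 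By the definition of the $\Delta$-norm, the identity map $\iota:D\to A_{\theta}$ is bounded with $\left\Vert \iota\right\Vert_{D\to A_{\theta}}\le 1$ for every $\theta\in(0,1)$. Hence $\iota:D_{\theta}\overset{1}{\to}A_{\theta}$ for all $\theta$, and applying the exact extrapolation functor $\mathfrak{E}$ we obtain a bounded map
\[
\iota:\mathfrak{E}(\{D_{\theta}\}_{\theta\in(0,1)})\to\mathfrak{E}(\{A_{\theta}\}_{\theta\in(0,1)}),\qquad \left\Vert\iota\right\Vert\le\sup_{\theta}\left\Vert\iota\right\Vert_{D_{\theta}\to A_{\theta}}\le 1.
\]
By the previous Lemma, $\mathfrak{E}(\{D_{\theta}\}_{\theta\in(0,1)})=D$ with equality of norms, so $\iota:D\overset{1}{\to}\mathfrak{E}(\{A_{\theta}\}_{\theta\in(0,1)})$, which is precisely the claimed inclusion (with norm $1$).

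\textbf{The inclusion $\mathfrak{E}(\{A_{\theta}\}_{\theta\in(0,1)})\overset{1}{\subset}\Sigma(\{A_{\theta}\}_{\theta\in(0,1)})$.} Now write $S:=\Sigma(\{A_{\theta}\}_{\theta\in(0,1)})$ and consider the constant family $\{S_{\theta}\}_{\theta\in(0,1)}$ with $S_{\theta}=S$. By the definition of the $\Sigma$-norm, for each $\theta\in(0,1)$ the inclusion $A_{\theta}\subset S$ has norm $\le 1$: indeed, if $a\in A_{\theta}$, the trivial representation $a=a$ as a single term shows $\left\Vert a\right\Vert_{S}\le\left\Vert a\right\Vert_{A_{\theta}}$. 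Thus the identity map $\jmath:A_{\theta}\overset{1}{\to}S_{\theta}$ is bounded with norm $\le 1$ for all $\theta$, and applying $\mathfrak{E}$ yields a bounded map
\[
\jmath:\mathfrak{E}(\{A_{\theta}\}_{\theta\in(0,1)})\to\mathfrak{E}(\{S_{\theta}\}_{\theta\in(0,1)}),\qquad \left\Vert\jmath\right\Vert\le\sup_{\theta}\left\Vert\jmath\right\Vert_{A_{\theta}\to S_{\theta}}\le 1.
\]
Again by the previous Lemma, $\mathfrak{E}(\{S_{\theta}\}_{\theta\in(0,1)})=S$ with equality of norms, so $\jmath:\mathfrak{E}(\{A_{\theta}\}_{\theta\in(0,1)})\overset{1}{\to}S$, establishing the second inclusion. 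This completes the proof.
\end{proof}

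The only genuinely delicate point is bookkeeping with the normalizations: one must check that the inclusions $\mathbb{A}_{1}\subset A_{\theta}\subset\mathbb{A}_{0}$ are consistent with the maps $D\to A_{\theta}$ and $A_{\theta}\to S$ being contractions, which is immediate from the very definitions of the $\Delta$- and $\Sigma$-norms (a single-term representation for $\Sigma$, the supremum defining the $\Delta$-norm). Everything else is a direct application of exactness of $\mathfrak{E}$ together with the already-established fact that $\mathfrak{E}$ reproduces constant families, so there is no real obstacle here; the statement is essentially the extrapolation analogue of the classical sandwiching $\vec{X}_{\theta,1;J}\subset F(\vec{X})\subset\vec{X}_{\theta,\infty;K}$ and is proved by the same formal device.
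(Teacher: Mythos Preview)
Your proof is correct and follows essentially the same approach as the paper: both arguments compare the given family with the constant families $D_\theta=\Delta(\{A_\theta\})$ and $S_\theta=\Sigma(\{A_\theta\})$, use the norm-$1$ inclusions $\Delta\hookrightarrow A_\theta\hookrightarrow\Sigma$, apply the exact functor $\mathfrak{E}$, and invoke the preceding lemma that $\mathfrak{E}$ reproduces constant families. The only differences are cosmetic (you treat the $\Delta$ inclusion first and spell out the norm bounds more explicitly).
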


\begin{proof}
Let $\{A_{\theta}\}_{\theta\in(0,1)}$ be a strongly compatible family. Let
$\theta_{0}\in(0,1).$ Then since any $a\in A_{\theta_{0}},$ can be represented
by a sum $a=\sum_{\theta\in(0,1)}a_{\theta},$ where all the terms are zero
except for $a_{\theta_{0}}=a,$ we see that%
\begin{equation}
A_{\theta_{0}}\overset{1}{\subset}\sum_{\theta\in(0,1)}A_{\theta},\text{ for
all }\theta_{0}\in(0,1). \label{intro1}%
\end{equation}
Consider the family of strongly compatible spaces that is defined for all
$\theta\in(0,1)$ by $B_{\theta}=\sum_{\theta\in(0,1)}A_{\theta},$ then
applying the extrapolation functor $\mathfrak{E}$ to (\ref{intro1}) and Lemma
\ref{elementary} yield%
\[
\mathfrak{E(}\{A_{\theta}\}_{\theta\in(0,1)})\overset{1}{\subset}%
\mathfrak{E(}\{B_{\theta}\}_{\theta\in(0,1)})=\sum_{\theta\in(0,1)}A_{\theta
}.
\]
Likewise, since
\[
\Delta_{\theta\in(0,1)}A_{\theta}\overset{1}{\subset}A_{\theta},\text{ for all
}\theta\in(0,1),
\]
it follows that for any exact extrapolation functor $\mathfrak{E}$
\[
\Delta_{\theta\in(0,1)}A_{\theta}=\mathfrak{E(}\Delta_{\theta\in
(0,1)}A_{\theta})\subset\mathfrak{E(}\{A_{\theta}\}_{\theta\in(0,1)}),
\]
as we wished to show.
\end{proof}

\subsection{Abstract extrapolation methods\label{sec:extrapol}}

The $\Sigma$- and $\Delta$-methods are part of more general families of extrapolation functors that were{\ introduced in \cite{As2003} (cf. also \cite{karamixi}), and then were studied in \cite{As05,AL2009,AL,AL2017}. Let
$F$ be a Banach function lattice on the interval $[0,1]$ (with respect to the
usual Lebesgue measure). A given family $\{A_{\theta}\}_{\theta\in(0,1)}$ of
compatible Banach spaces, we define the Banach space $\mathbf{F}(\{A_{\theta
}\}_{\theta\in(0,1)})$, consisting of all $a\in$}$\cap_{\theta\in
(0,1)}A_{\theta}${\ such that the function $\xi_{a}(\theta):={\Vert a\Vert
}_{A_{\theta}}$ defined on $(0,1)$ belongs to $F$, endowed with the norm
$\Vert a\Vert:={\left\Vert \,\xi_{a}\,\right\Vert }_{F}$. In particular, if
$F=L^{\infty}[0,1]$, we arrive at the definition of the $\Delta$-functor. In
analogous way one can define a family of extrapolation functors generalizing
the $\Sigma$-functor (see the definition of the $\vec{A}_{\xi,q,G}^{J}$ spaces
in \cite{ALM,asly}).
}

\end{document}